\newtheorem{theorem}{Theorem}[section]
\newtheorem{lemma}[theorem]{Lemma}
\newtheorem{e-proposition}[theorem]{Proposition}
\newtheorem{e-definition}[theorem]{Definition\rm}
\newtheorem{remark}[theorem]{\it Remark\/}
\def\div{{\rm div}}
\DeclareMathOperator*{\divergenz}{div}              %
\newcommand{\N}{\mathbb{N}}
\newcommand{\R}{\mathbb{R}}
\newcommand{\bw}{\mathbf{w}}
\newcommand{\bz}{\mathbf{z}}
\newcommand{\bu}{\mathbf{u}}
\newcommand{\bv}{\mathbf{v}}
\newcommand{\bphi}{\boldsymbol{\phi}}
\newcommand{\bpsi}{\boldsymbol{\psi}}
\newcommand{\bta}{\boldsymbol{\eta}}
\newcommand{\eps}{\varepsilon}
\newcommand{\Om}{\Omega}
\newcommand{\into}{\int_{\Omega}}
\renewcommand{\l}{\left}
\renewcommand{\r}{\right}
\numberwithin{theorem}{section}
\numberwithin{equation}{section}
\title[Existence of weak solutions to a cross-diffusion Cahn-Hilliard type system]{Existence of weak solutions to  a cross-diffusion Cahn-Hilliard type system}
\author[V.\,Ehrlacher]{Virginie Ehrlacher}
\address[V.\,Ehrlacher]{CERMICS, Ecole des Ponts, INRIA, MATHERIALS team-project, 6 et 8 av. Blaise Pascal, Cit\'e Descartes 77455 Marne-La-Vall\'ee, France}
\email{virginie.ehrlacher@enpc.fr}
\author[G.\,Marino]{Greta Marino}
\address[G.\,Marino]{Technische Universit\"{a}t Chemnitz, Fakult\"at f\"ur Mathematik, Reichenhainer Stra\ss e 41, 09126 Chemnitz, Germany}
\email{greta.marino@mathematik.tu-chemnitz.de}
\author[J.-F.\,Pietschmann]{Jan-Frederik Pietschmann}
\address[J.-F.\,Pietschmann]{Technische Universit\"{a}t Chemnitz, Fakult\"at f\"ur Mathematik, Reichenhainer Stra\ss e 41, 09126 Chemnitz, Germany}
\email{jfpietschmann@math.tu-chemnitz.de}
\subjclass[2020]{35D30, 35G31, 35G50}
\keywords{Cahn-Hilliard, cross-diffusion, weak solutions, global existence, degenerate Ginzburg-Landau}
\begin{document}

\begin{abstract}
The aim of this article is to study a Cahn-Hilliard model for a multicomponent mixture with cross-diffusion effects, degenerate mobility and where only one of the species does separate from the others. 
We define a notion of weak solution adapted to possible degeneracies and our main result is (global in time) existence. 
In order to overcome the lack of a-priori estimates, our proof uses the formal gradient flow structure of the system and an extension of the boundedness by entropy method which involves a careful analysis of an auxiliary variational problem. 
This allows to obtain solutions to an approximate, time-discrete system. Letting the time step size go to zero, we recover the desired weak solution where, due to their low regularity, 
the Cahn-Hilliard terms require a special treatment.
\end{abstract}

\maketitle
%\tableofcontents

%\jan{Is the system for $u_i$ actually elliptic???}

\section{Introduction}

The aim of this work is to study a Cahn-Hilliard model with degenerate mobility for a multicomponent mixture where cross-diffusion effects between the different species of the system are taken into account, 
and where only one species does separate from the others. The motivation for considering such a model stems from the fact that there exist multiphase systems where miscible entities may coexist in one single phase of the system, \cite{klinkert2015comprehension,Wenisch2016}. In the latter phase, cross-diffusion effects between the different miscible chemical species may have to be taken into account in order to correctly 
model the evolution of the concentrations or of the volumic fractions of each species. 

\medskip

More precisely, let $\Omega$ be a regular open bounded subdomain of $\mathbb{R}^d$ with $d = 1,2,3$.
%for some $d\in \mathbb{N}^*$. 
We assume that the mixture is composed of $n+1$ species for some positive $n \in \N\setminus \{0\}$, occupying the domain $\Omega$. Let $T>0$ be some final time. For all $0\leq i \leq n$, we denote by $u_i(t,x)$ the volumic fraction of the $i^{th}$ species at time $t\in [0,T]$ and point $x\in \Omega$. 
We are interested in proving the existence of weak solutions $ \bu:= (u_0, u_1, \dots, u_n)$ to a system of the form:
\begin{equation}\label{system}
 \begin{split}
  \partial_t  \bu&=\div\left( M(\bu) \nabla  \mu\right), \\
 \end{split}
\end{equation}
that satisfy 
$$
\forall \, 0\leq i \leq n, \;  0\leq u_i(t,x) \leq 1,\quad \text{ and }\quad \sum_{i=0}^n u_i(t,x) = 1\text{ 
for a.e. }t\in [0,T],\;x\in \Omega.
$$
% \begin{align*}
% \bu \in \mathcal Z := \left\{ \bu \; : \; \right\}.
% \end{align*}
Here, for all $\bu \in \mathbb{R}_+^{n+1}$, $M(\bu)\in \mathbb{R}^{(n+1) \times (n+1)}$ is a degenerate mobility matrix whose precise expression is given in Section~\ref{sect2}, while $\mu$ is the chemical potential, defined as 
\begin{equation*} %\label{eq:chem_pot}
\mu = D_{\bu} E(\bu).
\end{equation*}
In this work, the energy functional $E: L^\infty(\Omega)^{n+1} \to \mathbb{R}\cup\{\pm \infty\}$ is given by
\begin{equation}
\label{energy}
E(\bu):= \left\{
\begin{array}{ll} 
 \int_\Omega \sum_{i=0}^n (u_i \ln u_i -u_i +1) + \frac{\eps}{2} |\nabla u_0|^2 + \beta u_0 (1-u_0) dx, & \mbox{if }u_0 \in H^1(\Omega),\\
 +\infty, & \mbox{otherwise},\\
\end{array}
\right.
\end{equation}
for some constants $\eps>0$ and $\beta>0$. The logarithmic terms in this energy functional account for diffusion while the other two terms are responsible for phase separation: the gradient term penalizes transitions 
while the last term encourages $u_0$ to be either one or zero. Note that in contrast to most other multi-phase Cahn-Hilliard systems, in our case, the last two terms only act on $u_0$, 
a situation called \emph{degenerate Ginsburg-Landau energy} in  \cite{Garcke2018_multiphasetumour}. The model is equivalent to the standard classical two-phase Cahn-Hilliard model with degenerate mobility in the 
case where $n=1$ and $\beta = \frac{1}{\eps}$ (see \cite{EG} for example). In the case when $\eps = \beta = 0$, system (\ref{system}) boils down to a
multi-species degenerate cross-diffusion system with size exclusion that was studied in the following series of publications~\cite{BDPS2010,BE2018freeboundary,Berendsen2019,cances2020convergent}. Let us also mention 
\cite{Berendsen2017nonlocal} where the cross-diffusion system with non-local interactions was studied which can be seen as a non-local precursor of our system.

Let us put our work into perspective with respect to previous results for multi-species Cahn-Hilliard and cross-diffusion systems. 

\subsubsection*{Cahn-Hilliard systems}
The scalar Cahn-Hilliard equation was introduced in \cite{CahnHilliard1958} as a model for phase separation. 
Existence of weak solution was first shown in the case of constant mobility, see e.g. \cite{Elliott1986,Caffarelli1995}, and later extended to degenerate, 
concentration dependent mobilities \cite{EG}. For more details we refer the reader to the review \cite{novick2008cahn} and the monograph \cite{Miranville2019}.
Multi-species Cahn-Hilliard systems have been studied in several earlier works and usually consider an energy functional of the form 
\begin{align*} %\label{eq:elliot_energy}
E({\bf u}):= \int_\Omega\left[ \Psi({\bf u}) + \frac{1}{2}\nabla {\bf u} \cdot \Gamma \nabla {\bf u}\right] dx,
\end{align*}
for some symmetric semi-definite positive matrix $\Gamma \in \mathbb{R}^{(n+1) \times (n+1)}$ and bulk free-energy functional $\Psi$. 
In~\cite{elliott1991generalized}, Elliott and Luckhaus proved a global existence result for such a multiphase Cahn-Hilliard system with constant mobility and 
$\Gamma = \gamma {\rm I}$ for some $\gamma >0$. In~\cite{Elliot1997}, the authors generalized their result to the case of a degenerate concentration-dependent mobility matrix with a positive definite matrix $\Gamma$. 
Recently, in~\cite{boyer2014hierarchy}, the authors proposed a novel hierarchy of multi-species Cahn-Hilliard systems which are consistent with the standard two-species Cahn-Hilliard system, and which read as the model introduced above with 
$\Gamma$ a positive-definite matrix, a constant mobility, and a particular bulk energy functional $\Psi$. Numerical methods for such systems were proposed and analyzed in several contributions, see e.g.~\cite{eyre1998unconditionally,Wu2017}.

In all these works, global existence results are obtained for various mobility matrices $M$, bulk energy functionals $\Psi$, and, at least up to our knowledge, 
always for a positive definite matrix $\Gamma$ which implies, from a modeling point of view, that each species composing the mixture has the tendency to separate from all other species. 
This also facilitates the analysis compared to our system since it ensures $H^2$-regularity in space for all species $u_i$, $i=0,\ldots, n$, while this can only be expected for $u_0$ in our case.

\medskip

\subsubsection*{Cross-diffusion systems with size exclusion} 

Systems of partial differential equations with cross-diffusion have gained a lot of interest in recent years \cite{Kfner1996InvariantRF,Chen2004,Chen2006,Lepoutre2012,Juengel2015boundedness} 
and appear in many applications, for instance the modeling of population dynamics of multiple species~\cite{Burger2016} or cell sorting or chemotaxis-like applications~\cite{Painter2002,Painter2009}.

One major difficulty in the analysis of such strongly coupled systems is the lack of a priori estimates. Maximum principles are 
not available in general and since such systems are often only degenerate parabolic, classical energy estimates obtained by (formally) testing with the solution itself do not work. 
In particular it is not possible to obtain $L^\infty$ bounds (e.g. non-negativity) by choosing suitable test functions as done in \cite{Elliot1997} for a multi-species Cahn-Hilliard system.
% and have been studied in different contexts~\cite{Aman1990,Le2006,Griepentrog2010,Griepentrog2004,chapman1,chapman2}. 
%
For some cross-diffusion systems that feature an \emph{entropic} or \emph{formal gradient flow} structure, these issues can be overcome. More precisely, for systems that can be written as 
$$
\partial_t \bu = \div(M(\bu) \nabla  \partial_\bu e(\bu)),
$$
where $e : \mathcal Z \to \R$ is the \emph{entropy density} corresponding to the \emph{entropy functional}
$$
E(\bu) = \int_\Omega e(\bu)\;dx, 
$$
with
\begin{equation*} %\label{eq:defZ}
\mathcal Z:= \left\{ \bu:=(u_0, \dots, u_n) \in \mathbb{R}_+^{n+1}, \quad \sum_{i=0}^n u_i  = 1 \right\},
\end{equation*}
and for all $\bu:=(u_0, \dots, u_n) \in \mathcal Z$, $\partial_u e(\bu) = (\partial_{u_0} e(\bu), \ldots, \partial_{u_n} e(\bu))$. 
If the mobility matrix is positive semi-definite, a formal calculation immediately shows that the entropy is non-increasing since 
$$
\frac{d}{dt} E(\bu) = - \int_\Omega  \nabla \partial_\bu e(\bu)^t M(\bu) \nabla \partial_\bu e(\bu)\;dx \le 0.
$$
Thus all quantities appearing in the entropy remain bounded if the entropy of the initial configuration is finite. The lack of maximum principles can be compensated by introducing 
entropy variables defined as partial derivatives of the entropy density. More precisely, one defines $h:\mathcal Z \to \R^n$ as 
$h(\bu)  := \partial_u e(\bu) = (\partial_{u_0} e(\bu), \ldots, \partial_{u_n} e(\bu))$ for all $\bu\in \mathcal Z$. 
It turns out that, under appropriate assumptions, $h$ is a one-to-one mapping and thus for arbitrary $\bw$ its inverse satisfies $h^{-1}(\mathbf w) \in \mathcal Z$. 
This idea was first applied in \cite{BDPS2010} and later extended to more general systems in \cite{Juengel2015boundedness} and coined \emph{boundedness by entropy}. 

In our case, the method is not directly applicable due to the gradient term in the entropy density and one of our contributions is its extension through the analysis of an auxiliary variational problem.

Finally, let us remark that the question of regularity and uniqueness for cross-diffusion systems with entropic structure is mostly 
open except for a few works that, however, require additional assumptions, \cite{Zamponi2017_volumefilling, Zamponi2017_corrigendum,Berendsen2019}.

%Our System~\eqref{system} can be derived as the (formal) limit of a microscopic stochastic lattice based model  and models the evolution of the local volumic fractions of a system composed of $n+1$ different species. The function $u_i(t,x)$ represents the value at some 
%time $t\in [0,T]$ and point $x\in \Omega$ of the density or volumic fraction of the $i^{th}$ entity. 
%In terms of modelling this means that the particles whose densities are given by the functions $u_i$ have a finite, positive size so that there is a maximal number of particles per given volume. 
%This is often referred to as size exclusion (or exclusion process). From a modelling point of view, one is therefore interested in considering solutions to (\ref{eq:sys}) which satisfy

\subsubsection*{Contribution and structure of the paper}
In this article we prove the existence of global weak solutions to system \eqref{system} with energy \eqref{energy} and supplemented with appropriate initial- and boundary conditions.\medskip\\
 The novelty of our work lies in the following contributions.
	\begin{itemize}
		\item[(a)] This is, to the best of our knowledge, the first attempt to a cross-diffusion Cahn-Hilliard system. 
		\item[(b)] We are able to treat of an energy that \itshape only involves Cahn-Hilliard terms acting on $u_0$ \normalfont but not separately on the other species which yields to a transport term with low regularity in these equations. 
		This is done by an appropriate definition of weak solution and a careful analysis when performing the limit of an approximate time-discrete system.
		A similar situation has so far only been studies in the case of a non-degenerate mobility~\cite{Dai2017_degenerateginzburg}.
		\item[(c)] We generalize the boundedness-by-entropy to some case when one cannot explicitly invert $h$ but instead has to solve a system of elliptic PDEs with logarithmic non-linearities. 
		The literature on such systems is rather sparse (see \cite{Montenegro2009_lognonline,Alves2017_systemlognonlin}) but using variational methods we obtain existence of positive solutions.
	\end{itemize}

\medskip

This manuscript is organized as follows. In Section~\ref{sect2}, we give a precise definition of our mobility matrix, introduce our notion of weak solution and state the main existence theorem. 
The proof is based on 
%To prove the existence of global weak solutions to the obtained system, we first consider 
the introduction of a regularized time discrete approximate problem, depending on a positive time step $\tau$, which is presented in Section~\ref{sect3}. 
We derive a priori estimates and prove the existence of time-discrete iterates via a Schauder fixed point argument. 
Finally, in Section~\ref{sect4} we exploit the regularity properties obtained in Section~\ref{apriori-sect} in order to pass to the limit $\tau \to 0^+$ and obtain a solution to \eqref{system}.

We intend to study the sharp-interface limit of this model in a future work.

\section{Preliminaries and main result}
\label{sect2}
Let us first introduce some notation used in the manuscript, give a precise definition of \eqref{system} and state our notion of weak solution together with the main existence result.

\subsection{Notation}

We assume in all the sequel that $\Omega$ is an open, bounded subset of $\mathbb{R}^d$ with $d \leq 3$ so that the embedding $H^2(\Omega) \hookrightarrow L^\infty(\Omega)$ is compact and fix a final time $T>0$. 
By $\N^*:= \N \setminus \{0\}$ we denote the set of positive integers. For a vector $\mathbf a \in \R^n$, $\operatorname{diag}(\mathbf a )$ denotes the $n\times n$ matrix that has the components of $\mathbf a$ on its diagonal.

\medskip

For any $\psi, \phi \in H^2(\Omega)$, we denote by 
$$
\langle \phi, \psi\rangle_{H^2(\Omega)}:= \int_\Omega \phi \psi + \nabla \phi \cdot \nabla \psi +  \Delta \phi \Delta \psi  dx, 
$$
and by $\|\phi\|_{H^2(\Omega)}:= \sqrt{{\langle \phi, \phi \rangle_{H^2(\Omega)}}}$. Similarly, for all $l =0,1,2,$  and for all $\bphi = (\phi_i)_{1\leq i \leq n}, \bpsi = (\psi_i)_{1\leq i \leq n} \in (H^l(\Omega))^n,$  we denote by
\begin{align*}
\langle \bphi, \bpsi \rangle_{H^l(\Omega)^n} =  \sum_{i=1}^n \langle \phi_i, \psi_i \rangle_{H^l(\Omega)}, 
\end{align*}
with 
$$
\|\bphi\|_{(H^l(\Omega))^n}:= \sqrt{\langle \bphi, \bphi \rangle_{(H^l(\Omega))^n}}
$$
and, for all $\bphi = (\phi_i)_{1\leq i \leq n} \in (L^\infty(\Omega))^n$, we set
$$
\|\bphi\|_{(L^\infty(\Omega))^n}:= \sqrt{\sum_{i=1}^n \|\phi_i\|_{L^\infty(\Omega)}^2}.
$$
%In all the manuscript, $u_i = u_i(x,t)$, $i=0,\ldots, n$ will be functions mapping from $(0,T)\times \Omega$ to $\R$ and we
Finally, we define the mapping $\kappa : \R_+ \times \R_+ \to \R$ by
\begin{align}\label{def:kappa}
\kappa(a,b) = \left\{ 
\begin{array}{ll}
\frac{a}{1-b} & \quad \mbox{ if } 1 -b \neq 0, \\
0 & \quad \mbox{ otherwise}.\\
\end{array}\right.
\end{align}

\subsection{Cahn-Hilliard cross-diffusion system}
Let us present the system we consider in this article in full detail. Let $\eps>0$ and $\beta >0$ and for all $\bu = (u_0, \dots, u_n) \in \left(L^\infty(\Omega)\cap H^1(\Omega)\right) \times (L^\infty(\Omega))^n$ consider the energy
$$
E(\bu)= \int_\Omega \sum_{i=0}^n (u_i \ln u_i -u_i +1) + \frac{\eps}{2} |\nabla u_0|^2 + \beta u_0 (1-u_0) dx. 
$$
For all $0\leq i \leq n$ let us introduce the chemical potentials, defined (on the formal level at this point) via 
\[
\mu_i = D_{u_i} E(\bu) = \ln u_i \qquad \forall \, i= 1, \dots, n,
\]
as well as
\[
\mu_0 = D_{u_0}E(\bu) = \ln u_0 - \eps \Delta u_0 + \beta (1- 2u_0),
\]
so that $\mu:= (\mu_0, \mu_1, \dots, \mu_n) = DE(\bu)$. Let us also introduce the auxiliary variables
\begin{equation}
\label{wi}
w_i := \ln u_i - \ln u_0 \qquad \forall \, i= 1, \dots, n,
\end{equation}
and 
\begin{equation*}
%\label{w0}
w_0 := -\eps \Delta u_0 + \beta(1-2u_0).
\end{equation*}
To specify the mobility matrix let, for $0\leq i \neq j \leq n$,  $K_{ij}$ denote some positive real number satisfying $K_{ij}= K_{ji}$. Then for $u \in \mathbb{R}^{n+1}$, let $M(\bu):= \left( M_{ij}(\bu)\right)_{0\leq i,j \leq n} \in \mathbb{R}^{(n+1)\times (n+1)}$ be the matrix 
\begin{equation}
\label{mobility}
\begin{aligned}
M_{ij}(\bu)&:= - K_{ij}u_i u_j \qquad &&\forall \, i\neq j= 0, \dots, n, \\
M_{ii}(\bu)&:= \sum_{0\leq j \neq i \leq n} K_{ij}u_iu_j && \forall \, i= 0, \dots, n.
\end{aligned}
\end{equation}
With these definitions, system \eqref{system} can be written, formally, in the scalar form: 
\begin{equation}
\label{eqi}
\begin{split}
 \partial_t u_i  & = \divergenz \biggl( \sum_{1\leq j \neq i \leq n} K_{ij}u_i u_j \nabla (\mu_i - \mu_j) + K_{i0}u_i u_0 \nabla (\mu_i - \mu_0)\biggr)  \\
 & = \divergenz \biggl( \sum_{1\leq j \neq i \leq n} K_{ij}u_i u_j \nabla (w_i - w_j) + K_{i0}u_i u_0 \nabla (w_i -w_0) \biggr)  \\
  & = \divergenz \biggl( \sum_{1\leq j \neq i \leq n} K_{ij} (u_j\nabla u_i - u_i\nabla u_j) + K_{i0}(u_0 \nabla u_i - u_i \nabla u_0) - K_{i0} u_i u_0\nabla w_0 \biggr),  \\
\end{split}
\end{equation}
for $1\leq i \leq n$ and
\begin{equation}
\label{eq0}
\begin{split}
 \partial_t u_0   & =  \divergenz \left( \sum_{1\leq i \leq n} K_{i0}u_i u_0 \nabla (\mu_0 - \mu_i)\right).  \\
  & =\div\left( \sum_{1\leq i \leq n} K_{i0}u_i u_0 \nabla (w_0 - w_i)\right)  \\
 & =\div\left( \sum_{1\leq i \leq n} K_{i0}(u_i \nabla u_0 - u_0 \nabla u_i) + K_{i0}u_i u_0\nabla w_0 \right).  \\
\end{split}
\end{equation}  
From this set of equations it is clear, at least formally, that 
\begin{align}\label{eq:sum_one}
\partial_t\left( \sum_{i=0}^n u_i \right) = 0.
\end{align}
Let us introduce an initial condition
\begin{align*}
%\label{eq:initial}
\bu^0=(u_0^0, \ldots, u_n^0) \in H^2(\Omega; \R^{n+1})
\end{align*} 
of the system which is assumed to satisfy 
\begin{equation}
\label{in-cond}
u_i^0(x) \ge 0 \quad \forall \, 0 \le i \le n, \quad \sum_{i=0}^n u_i^0(x)= 1, \quad \text{and} \quad {\bu}(0, x)= {\bu}^0(x)  
\end{equation}
for a.e. $x \in \Omega$. In view of \eqref{eq:sum_one} we expect that solutions to system \eqref{system} satisfy 
\begin{equation}
\label{constr}
u_0 = 1 -\sum\limits_{i=1}^n u_i, \quad \text{ a.e. in } (0,T) \times \Omega,
\end{equation}
and it can be easily checked that, if $\bu$ satisfies (\ref{eqi}) and (\ref{constr}), then  necessarily  (\ref{eq0}) has to be satisfied as well. We make a last remark. As $0 \leq u_i \leq 1- u_0= \sum\limits_{j=1}^n u_j$ for all $1\leq i \leq n$, denoting 
\begin{align}\label{eq:def_kappa_i}
\kappa_i(t,x) := \kappa(u_i(t,x),u_0(t,x)),
\end{align}
it holds that 
	\[
	u_0 u_i \nabla w_0 = \frac{u_i}{1-u_0} u_0 (1-u_0) \nabla w_0  =  \kappa_i J,
	\]
 where $J := u_0(1-u_0) \nabla w_0$. 
 
\medskip

Then, supplementing this set of equations with no-flux boundary conditions, we obtain that $( (u_i)_{0\leq i \leq n}, J)$ is a solution to 
	\begin{equation}\label{eq:finalsys}
	\begin{aligned}
	&\partial_t u_i = \divergenz \biggl( \sum_{1\leq j \neq i \leq n} K_{ij} (u_j\nabla u_i - u_i\nabla u_j) + K_{i0}(u_0 \nabla u_i - u_i \nabla u_0) - K_{i0} \kappa_i J \biggr)  \, && \mbox{ in } (0,T)\times \Omega, \\
	&  u_0= 1 - \sum_{i=1}^n u_i &&  \mbox{ in } (0,T)\times \Omega,\\
	  &J= u_0 (1-u_0) \nabla \left( -\eps \Delta u_0 + \beta(1-2u_0)\right) && \mbox{ in } (0,T)\times \Omega,\\
	& \biggl( \sum_{1\leq j \neq i \leq n}  K_{ij} (u_j\nabla u_i - u_i\nabla u_j) + K_{i0}(u_0 \nabla u_i - u_i \nabla u_0) - K_{i0} \kappa_i J \biggr)  \cdot {\bf n} = 0 &&  \mbox{ in } (0,T)\times \partial \Omega,\\
	& u_i(0,\cdot) = u_i^0  &&  \mbox{ in } \Omega,
	 \end{aligned}
	\end{equation}
where ${\bf n}$ denotes the normal unit vector pointing outwards the domain $\Omega$.

\subsection{Notion of weak solution and main result}
The aim of our work is to prove the existence of a weak solution to system (\ref{eq:finalsys}) in the following sense.
\begin{e-definition}
\label{def1}
We say that $((u_i)_{0\leq i \leq n}, J) $ is a weak solution to \eqref{eq:finalsys} if 
\begin{enumerate}
\item  $0 \le u_i \le 1$ for every $ i= 0, \dots, n$;

\item $\sum\limits_{i=0}^n u_i = 1$ a.e. in $(0,T)\times \Omega$;  

\item $u_i \in L^2((0,T); H^1(\Omega))$ for all $1\leq i \leq n$;

\item $u_0 \in L^2((0,T); H^2(\Omega))$;
\item  $ \partial_t u_i \in L^2((0, T); (H^{1}(\Omega))')$ for all $0\leq i \leq n$;

\item $u_i(0,\cdot) = u_i^0$ for all $0\leq i \leq n$; 

\item $J \in (L^2((0,T)\times \Omega))^d$; 

% \item $\left[\sum_{1 \le j \ne i \le n} K_{ij} (u_j\nabla u_i - u_i \nabla u_j) + K_{i0}\left(u_0\nabla u_i - u_i \nabla u_0 -  \frac{u_i}{1-u_0}J \right)\right] \cdot {\bf n} = 0$ on  $(0,T)\times \partial \Omega$ for all $1\leq i\leq n$.
\item $J = (1-u_0) u_0 \nabla \left(-\eps \Delta u_0 + \beta (1-2u_0)\right)$ in the following weak sense
$$
\int_0^T \int_\Omega J \cdot \bta  = - \int_0^T \int_\Omega \left(-\eps \Delta u_0 + \beta(1-2u_0)\right) \divergenz ((1-u_0) u_0 \bta) dx dt
$$
for all $\bta \in L^2((0,T); (H^1(\Omega))^d) \cap L^\infty((0,T)\times \Omega; \mathbb{R}^d)$ which satisfy $\bta \cdot {\bf n} = 0$ on $\partial \Omega \times (0,T)$;

\item  for all $1\leq i \leq n$, for all $\phi_i \in L^2((0,T); H^1(\Omega))$,
\begin{equation*}
%\label{weak1}
\begin{split}
&\int_0^T\langle \partial_t u_i, \phi_i\rangle_{(H^1(\Omega))', H^1(\Omega)} dt \\
&=  -\int_0^T\int_{\Omega}  \l[\sum_{1 \le j \ne i \le n} K_{ij} (u_j\nabla u_i - u_i \nabla u_j) + K_{i0}\left(u_0\nabla u_i - u_i \nabla u_0 -   \kappa_i J \right)\r] \cdot \nabla \phi_i dx dt, 
\end{split}
\end{equation*}
where $\kappa_i(t,x) := \kappa(u_i(t,x),u_0(t,x)),$ with $\kappa$ defined in \eqref{def:kappa}. 

% denotes the function defined such that for almost all $x\in \Omega$
%$$
%\frac{u_i}{1-u_0}(x) = \left\{ 
%\begin{array}{ll}
%\frac{u_i(x)}{1-u_0(x)} & \mbox{ if } 1-u_0(x) \ne 0,\\
% 0 & \mbox{ otherwise}.\\ 
%\end{array}
%\right.
%$$
\end{enumerate}
\end{e-definition}
Note that due to $\kappa$, our definition of weak solution is related (but stronger) than the one introduced in \cite{Dai2016} for a scalar, degenerate Cahn-Hilliard equation.
Our main result is then the following.
\begin{theorem}\label{thm:existence}
Let $\bu^0=(u_0^0, \dots, u_n^0) \in H^2(\Omega; \R^{n+1})$ be an initial condition satisfying \eqref{in-cond}. Then, there exists at least one weak solution $\bu$ to \eqref{eq:finalsys} in the sense of Definition~\ref{def1}.
\end{theorem}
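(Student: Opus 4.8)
The plan is to realize \eqref{eq:finalsys} as a (formal) gradient flow of the energy \eqref{energy} for the degenerate metric induced by $M(\bu)$, to build time-discrete approximants by an implicit scheme that respects this structure, and to pass to the limit in the time step. First I would fix $N\in\N^*$, set $\tau=T/N$, and construct iterates $\bu^k\in\mathcal{Z}$, $k=1,\dots,N$, starting from $\bu^0$. Given $\bu^{k-1}$ with $u_0^{k-1}\in H^2(\Omega)$, I would seek $\bu^k$ together with the auxiliary quantities $w_i^k=\ln u_i^k-\ln u_0^k$ ($1\le i\le n$) and $w_0^k=-\eps\Delta u_0^k+\beta(1-2u_0^k)$ solving the weak, implicit-in-time analogue of the equations in \eqref{eq:finalsys}, augmented by a $\tau$-vanishing higher-order regularization (for instance a term $\tau\langle\bw^k,\bphi\rangle_{(H^2(\Omega))^n}$) so that the natural test space $H^2(\Omega)\embedded L^\infty(\Omega)$ is an algebra for $d\le 3$. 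Existence of a single iterate I would obtain by a Schauder fixed-point argument: freeze the state $\bu^*$ entering $M(\cdot)$ and the coefficients $\kappa_i$ of \eqref{eq:def_kappa_i}, solve the resulting problem for $(\bw^k,u_0^k)$ by the auxiliary variational problem described below, and check that the induced solution map $\bu^*\mapsto\bu^k$ is continuous, compact, and maps a fixed ball into itself, the ball being furnished by the one-step entropy estimate below.

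The crucial new ingredient is the recovery, at each step, of an admissible state $\bu^k\in\mathcal{Z}$ satisfying $0\le u_i^k\le 1$ from the dual (entropy) variables. In the classical boundedness-by-entropy method one inverts $h=DE$ pointwise; here the gradient term $-\eps\Delta u_0$ in $\mu_0$ destroys this pointwise structure, and the inversion instead amounts to solving a coupled elliptic system with logarithmic nonlinearities for $u_0$ (and then $u_i=u_0\,e^{w_i}$, $u_0=(1+\sum_{i=1}^n e^{w_i})^{-1}$). I would solve this auxiliary problem variationally, as the minimization of a strictly convex functional combining the Boltzmann entropy $\sum_{i=0}^n(u_i\ln u_i-u_i+1)$ with the regularized Ginzburg-Landau contribution: the direct method yields a unique minimizer, the logarithmic barrier forces $0<u_i<1$, and the Euler-Lagrange equations reproduce exactly the elliptic relation defining $w_0^k$. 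This is the extension of boundedness by entropy announced in contribution (c), and it simultaneously delivers the bounds $0\le u_i\le 1$ for free.

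Testing the scheme against $\mu^k$ (equivalently against the $w_i^k$) and using the positive semidefiniteness of $M(\bu^k)$, I would derive the discrete energy inequality $E(\bu^k)+\tau\,\mathcal{D}^k\le E(\bu^{k-1})$, where the dissipation $\mathcal{D}^k$ controls the diffusive fluxes $\sqrt{u_i^k u_j^k}\,\nabla(w_i^k-w_j^k)$ in $L^2(\Omega)$ and, through the identity $u_0 u_i\nabla w_0=\kappa_i J$ and $u_0(1-u_0)\le\tfrac14$, also the degenerate Cahn-Hilliard flux $J^k=u_0^k(1-u_0^k)\nabla w_0^k$ in $L^2(\Omega)^d$. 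Summing over $k$ and using $E(\bu^0)<\infty$ (guaranteed by \eqref{in-cond} and $\bu^0\in H^2$), I obtain $\tau$-uniform bounds: $u_0$ in $L^\infty(0,T;H^1(\Omega))\cap L^2(0,T;H^2(\Omega))$, the $u_i$ in $L^2(0,T;H^1(\Omega))$, $J$ in $L^2((0,T)\times\Omega)^d$, and, from the scheme itself, $\partial_t u_i$ in $L^2(0,T;(H^1(\Omega))')$. Introducing the piecewise-constant and piecewise-affine time interpolants and invoking an Aubin-Lions-Simon compactness argument, I would extract a subsequence along which $u_i\weak u_i$ weakly, $u_0\to u_0$ strongly in $L^2(0,T;H^1(\Omega))$ and a.e., and $J\weak J$ weakly in $L^2$.

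The hard part will be the passage to the limit in the low-regularity Cahn-Hilliard terms, namely the flux $J=u_0(1-u_0)\nabla(-\eps\Delta u_0+\beta(1-2u_0))$ and the coupling $\kappa_i J$ in the equation for $u_i$. Since $u_0$ is controlled only in $L^2(0,T;H^2(\Omega))$, the object $\nabla\Delta u_0$ is unavailable, which is exactly why Definition~\ref{def1}(8) records the flux identity in the integrated-by-parts form; I would pass to the limit there using $\Delta u_0\weak\Delta u_0$ in $L^2$ tested against the strongly convergent coefficient $(1-u_0)u_0\bta$, together with the weak $L^2$-limit of $J$. For the term $\kappa_i J$ I would combine the a.e. convergence of $u_0,u_i$, the uniform bound $0\le\kappa_i\le 1$ from \eqref{def:kappa}, and the identity $u_0 u_i\nabla w_0=\kappa_i J$ to identify the weak limit, the genuinely delicate point being the degeneracy set where $u_0\to 1$, which is precisely accommodated by the truncation in the definition \eqref{def:kappa} of $\kappa$. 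It then remains to check that the limit inherits the constraint $\sum_{i=0}^n u_i=1$ and the initial condition from the discrete construction, and that items (1)--(9) of Definition~\ref{def1} all hold, which completes the argument.
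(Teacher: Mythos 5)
Your plan is correct and follows essentially the same route as the paper: an implicit time discretization with a vanishing $\tau\,H^2$-regularization, a Schauder fixed point whose inner step inverts the entropy map by minimizing a convex functional whose logarithmic barrier forces $0<u_i<1$, a discrete energy--dissipation inequality yielding the $\tau$-uniform bounds, and a dedicated argument for the degenerate flux term $\kappa_i J$. The only place where your sketch is thinner than what is actually required is the identification of the weak limit of $u_0^{(\tau)}u_i^{(\tau)}\nabla w_0^{(\tau)}$: since $\kappa$ is discontinuous where $u_0=1$, a.e.\ convergence of $(u_i,u_0)$ together with $0\le\kappa_i\le 1$ does not by itself give a.e.\ convergence of $\kappa_i^{(\tau)}$, and one must in addition show --- as the paper does by splitting $\Omega$ into $\{1-u_0\ge\epsilon\}$ and its complement --- that the flux contribution near the degeneracy set is $O(\sqrt{\epsilon})$ in $L^1(0,T;L^2(\Omega))$, which follows from $u_i^{(\tau)}\le 1-u_0^{(\tau)}$ and the dissipation bound on $u_0^{(\tau)}(1-u_0^{(\tau)})|\nabla w_0^{(\tau)}|^2$.
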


The rest of the article is devoted to the proof of Theorem~\ref{thm:existence}  which is structured as follows. 

We first prove the existence of solutions to a regularized time discrete version of system~\eqref{eq:finalsys}. The proof of the existence of solutions to this auxiliary problem is the object of Section~\ref{sect3} and is done using Schauder's fixed point theorem and an extension of the boundedness-by-entropy method, while Section~\ref{apriori-sect} is dedicated to estimates on various norms of such solutions. Finally, these estimates enable us to identify the limit of
the solution to the auxiliary problem as the time step goes to $0^+$ as a weak solution to (\ref{eq:finalsys}) 
in the sense of Definition~\ref{def1}. This last step is detailed in Section~\ref{sect4}.

\section{Existence of solutions to a regularized discrete in time system}
\label{sect3}
For further use we introduce the sets 
$$
\mathcal A:= \left\{\bu:= (u_i)_{1\leq i \leq n} \in (L^\infty(\Om))^n: \, u_i \ge 0, \, i=1,\ldots, n, \;   u_0:= 1 - \sum_{i=1}^n u_i  \geq 0\right\},
$$
and  
$$
\mathcal{B} := \left\{ \bphi = (\phi_i)_{1\leq i \leq n} \in (L^\infty(\Omega))^n \; : \; \phi_0:= - \sum_{i=1}^n \phi_i \in H^1(\Omega) \right\}.
$$
Let us point out here that  $\mathcal A$ is a closed convex non-empty subset of $(L^\infty(\Omega))^n$. Moreover, it is clear from the definition that for $\bu \in \mathcal A$ every $u_i$ satisfies the box constraints $ 0 \le u_i \le 1$, $i= 0, \dots, n$. 

The aim of this section is to prove the existence of a solution to a time-discrete regularized version of the system introduced in the previous section. 
More precisely, for every positive time step $\tau>0$, we want to give a rigorous sense to a regularized semi-discretization of our system formally defined as follows. 
For all $p\in \mathbb{N}$, given $\bu^p:=(u_1^p, \dots, u_n^p) \in \mathcal A \cap (H^2(\Omega))^n$  we look for a set of functions $\bu^{p+1}:=(u_1^{p+1}, \dots, u_n^{p+1})\in \mathcal A$ that is weak solution to the following nonlinear system: 
	\begin{equation}
	\label{dis-sys1-00}
	\begin{split}
	\into \frac{u_i^{p+1}- u_i^p}{\tau} \phi_i dx &= -\into \biggl(\sum_{1 \le j \ne i \le n} K_{ij} u_i^{p+1} u_j^{p+1} \nabla (w_i^{p+1}- w_j^{p+1})\\
	& \quad \quad+ K_{i0} u_i^{p+1} u_0^{p+1} \nabla (w_i^{p+1}- w_0^{p+1/2}) \biggr) \cdot \nabla \phi_i dx \\
	& \quad -\tau \langle w_i^{p+1}- w_0^{p+1/2}, \phi_i \rangle_{H^2(\Omega)},
	\end{split}
	\end{equation}
for all $1\leq i \leq n$, where $$
u_0^{p+1}:= 1 - \sum_{i=1}^n u_i^{p+1}, \quad u_0^p = 1 - \sum_{i=1}^n u_i^p,
$$
	\begin{equation}
	\label{w0p}
	w_0^{p+1/2}:= -\eps \Delta u_0^{p+1}+ \beta (1- 2u_0^p),
	\end{equation}
and
$$
w_i^{p+1}:= \ln u_i^{p+1} - \ln u_0^{p+1}, \quad i = 1, \dots, n.
$$
Let us emphasize that we use a semi-implicit discretization as we consider the terms arising from the concave part of the entropy at the previous time step $p$. We will see below that this ensures that the discrete energy is non-increasing.

To give a rigorous sense to this nonlinear system  we will make use of a fixed-point argument. First of all, let us point out that, defining 
	\begin{equation*}
	%\label{entr-var}
	\bar w_i^{p+1}:= w_i^{p+1}- w_0^{p+1/2} =\ln u_i^{p+1} - \ln u_0^{p+1} + \eps \Delta u_0^{p+1} - \beta (1- 2u_0^p),
	\end{equation*}
for all $1\leq i \leq n$,  system \eqref{dis-sys1-00} boils down to
	\begin{equation}
	\label{dis-sys2}
	\begin{split}
	\into \frac{u_i^{p+1}- u_i^p}{\tau} \phi_i dx&= -\into \biggl(\sum_{1 \le j \ne i \le n} K_{ij} u_i^{p+1} u_j^{p+1} \nabla (\bar w_i^{p+1}- \bar w_j^{p+1}) + K_{i0} u_i^{p+1} u_0^{p+1} \nabla \bar w_i^{p+1} \biggr) \cdot \nabla \phi_i dx \\
	& \quad -\tau  \langle \bar w_i^{p+1}, \phi_i \rangle_{H^2(\Omega)}.\\
	\end{split}
	\end{equation}
The auxiliary variables $\bar \bw^{p+1}=(\bar w_1^{p+1}, \dots, \bar w_n^{p+1})$ will play a central role in the proof of the existence of solutions to this semi-discretized regularized system. We have the following result.
\begin{theorem}\label{thm:existence_time_discrete}
 Let $\tau >0$ be a discrete time step, let $p\in \mathbb{N}$, and let $\bu^p \in \mathcal A \cap (H^2(\Omega))^n$. Then, there exists a solution   $(\bu^{p+1}, \bar \bw^{p+1}) \in (\mathcal A \cap (H^2(\Omega))^n) \times (H^2(\Omega))^n$ to the following coupled system: for all $1\leq i \leq n$, for all $\phi_i \in  H^2(\Omega)$,
 	\begin{equation}
	\label{dis-sys1}
	\begin{split}
	\into \frac{u_i^{p+1}- u_i^p}{\tau} \phi_i dx &= -\into \biggl(\sum_{1 \le j \ne i \le n} K_{ij} u_i^{p+1} u_j^{p+1} \nabla (\bar{w}_i^{p+1}- \bar{w}_j^{p+1})+ K_{i0} u_i^{p+1} u_0^{p+1} \nabla \bar{w}_i^{p+1} \biggr) \cdot \nabla \phi_i dx \\
	& \quad -\tau \langle \bar{w}_i^{p+1}, \phi_i \rangle_{H^2(\Omega)},
	\end{split}
	\end{equation}
 and for all $\bpsi = (\psi_i)_{1\leq i \leq n} \in \mathcal B \cap (L^\infty(\Omega))^n  $,  
\begin{equation}\label{eq:w_to_u_weak0}
\sum_{i=1}^n \into (\ln u^{p+1}_i- \ln u^{p+1}_0)\psi_i + \eps \nabla u^{p+1}_0\cdot \nabla \psi_0 dx = \sum_{i=1}^n \into \left( \bar w^{p+1}_i + \beta(1- 2 u_0^p)\right)\psi_i dx,
\end{equation}
where $u_0^p$ is given by \eqref{constr}.
 Moreover, the function $\bu^{p+1}$ satisfies the following property: there exists $\delta_p>0$ such that 
\begin{equation}\label{eq:positiv}
u_i^{p+1}\geq \delta_p, \quad \mbox{ for all }1\leq i \leq n, \;\mbox{ and }  \;u_0^{p+1}:= 1 - \sum_{i=1}^n u_i^{p+1} \geq \delta_p,\quad\text{ a.e. in } (0,T)\times\Omega.
\end{equation}
\end{theorem}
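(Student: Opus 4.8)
The plan is to realise the two coupled relations \eqref{dis-sys1}--\eqref{eq:w_to_u_weak0} as a fixed point of a decoupling map on $\mathcal A$, combining Schauder's theorem with an extension of the boundedness-by-entropy philosophy in which the (non-explicit) inverse of the entropy map is produced by a convex minimisation rather than by an explicit formula. Concretely, I would build a map $T:\mathcal A\cap(L^\infty(\Om))^n\to\mathcal A$ in two steps. Given $\bu\in\mathcal A$, first freeze the mobility coefficients $u_iu_j$ and the source $(u_i-u_i^p)/\tau$ and solve \eqref{dis-sys1} as a \emph{linear} problem for $\bar\bw\in(H^2(\Om))^n$; second, feed this $\bar\bw$ into \eqref{eq:w_to_u_weak0}, whose (unique, positive) solution defines the new state $\bu^{\rm new}=T(\bu)$. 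A fixed point $\bu=T(\bu)$ then solves both equations simultaneously with $\bar\bw=\bar\bw(\bu)$, which is exactly the asserted pair.

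For the first (linear) step I would invoke Lax--Milgram on $(H^2(\Om))^n$: the bilinear form is the sum of the mobility term, which is nonnegative because $M(\bu)$ is positive semi-definite, and the regularising term $\tau\sum_i\langle\bar w_i,\phi_i\rangle_{H^2(\Om)}$, which supplies coercivity and $H^2$-control. Continuity is immediate since $0\le u_iu_j\le 1$, and the right-hand side is a bounded functional because $u_i-u_i^p\in L^\infty(\Om)$. This yields a unique $\bar\bw$ with $\|\bar\bw\|_{(H^2(\Om))^n}\le C(\tau)$ uniformly in $\bu\in\mathcal A$.

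The second step is the auxiliary variational problem and is the heart of the matter. I would recognise \eqref{eq:w_to_u_weak0} as the Euler--Lagrange equation of the strictly convex functional
\begin{equation*}
\mathcal F(\bu)=\into\Big[\sum_{i=0}^n(u_i\ln u_i-u_i)+\frac{\eps}{2}|\nabla u_0|^2\Big]dx-\into\sum_{i=1}^n\big(\bar w_i+\beta(1-2u_0^p)\big)u_i\,dx,
\end{equation*}
with $u_0=1-\sum_{i=1}^nu_i$, minimised over $\mathcal A$ with $u_0\in H^1(\Om)$. The direct method gives a unique minimiser: the box constraints built into $\mathcal A$ keep the entropy and the linear term bounded, the Dirichlet term controls $\|u_0\|_{H^1(\Om)}$, and $\mathcal F$ is weakly lower semicontinuous and strictly convex. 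Using the constraint $\sum_i u_i=1$ together with the fact that only $u_0$ carries a gradient, the stationarity conditions decouple into the pointwise identity $u_i=(1-u_0)e^{\bar w_i}/\sum_{j=1}^n e^{\bar w_j}$ and a single scalar elliptic equation for $u_0$,
\begin{equation*}
-\eps\Delta u_0=\ln(1-u_0)-\ln u_0-\ln\Big(\sum_{j=1}^n e^{\bar w_j}\Big)-\beta(1-2u_0^p),\qquad \partial_{\bf n}u_0=0,
\end{equation*}
an elliptic problem with logarithmic nonlinearities in the spirit of \cite{Montenegro2009_lognonline,Alves2017_systemlognonlin}. Elliptic regularity, using $\bar\bw\in(H^2(\Om))^n\embedded(L^\infty(\Om))^n$ for $d\le3$, then upgrades the minimiser to $\bu^{\rm new}\in(H^2(\Om))^n$, which is what makes $T$ compact from $(L^\infty(\Om))^n$ into itself through the compact embedding $H^2(\Om)\embedded L^\infty(\Om)$.

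The hard part, and the source of the positivity claim \eqref{eq:positiv}, is to show that the minimiser stays strictly inside the simplex, i.e. $\delta_p\le u_0\le 1-\delta_p$ for some $\delta_p>0$. This is precisely where the classical boundedness-by-entropy argument, which would read positivity off an explicit inverse of $h$, is unavailable; instead I would exploit the repulsive logarithmic barriers $-\ln u_0$ and $\ln(1-u_0)$ in the scalar equation (equivalently, the infinite slope of the entropy at the faces of $\mathcal A$) via a comparison/barrier argument to bound $u_0$ away from $0$ and $1$, and then propagate this through $u_i=(1-u_0)e^{\bar w_i}/\sum_j e^{\bar w_j}$, using $\|\bar\bw\|_{(L^\infty(\Om))^n}\le C(\tau)$, to obtain a common lower bound $\delta_p$ for all components. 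With positivity in hand the logarithms are well defined and $T$ is single-valued; the remaining routine points are the continuity of $T$ (from uniqueness and stability of the convex minimiser together with continuous dependence of the Lax--Milgram solution on its $L^\infty$ coefficients) and the application of Schauder's theorem on the convex set $\mathcal A$, whose fixed point furnishes the desired solution $(\bu^{p+1},\bar\bw^{p+1})$.
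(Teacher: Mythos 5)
Your overall architecture coincides with the paper's: a Schauder fixed point for $\mathcal S_2\circ\mathcal S_1$ on $\mathcal A$, where $\mathcal S_1$ solves the frozen-coefficient linear problem by Lax--Milgram on $(H^2(\Omega))^n$ (coercivity coming from the $\tau\langle\cdot,\cdot\rangle_{H^2}$ regularisation, uniform bounds from $0\le u_iu_j\le 1$), and $\mathcal S_2$ inverts the entropy relation by minimising a convex functional over $\mathcal A$; compactness comes from $H^2(\Omega)\hookrightarrow L^\infty(\Omega)$ exactly as you say. Your algebraic reduction $u_i=(1-u_0)e^{\bar w_i}/\sum_j e^{\bar w_j}$ and the resulting scalar equation for $u_0$ are also consistent with what the paper obtains \emph{a posteriori} (Lemma~\ref{lemma2} uses precisely $u_i=u_je^{\bar w_i-\bar w_j}$ for the $H^2$ regularity).

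There is, however, a genuine gap at the step you yourself identify as the heart of the matter: the positivity bound \eqref{eq:positiv}. You propose to read off strict interior bounds $\delta_p\le u_0\le 1-\delta_p$ from a comparison/barrier argument applied to the scalar elliptic equation
$-\eps\Delta u_0=\ln(1-u_0)-\ln u_0-\ln\bigl(\sum_j e^{\bar w_j}\bigr)-\beta(1-2u_0^p)$. But that equation is the Euler--Lagrange system of the constrained minimisation, and it is only available \emph{after} one knows the minimiser lies in the interior of the simplex: if some $u_i$ vanishes on a set of positive measure, the admissible variations are one-sided, the difference quotients of $u_i\ln u_i$ are not dominated (the slope of $x\ln x$ is $-\infty$ at $0$), and one obtains at best a variational inequality in which the logarithms are not even defined pointwise. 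Your argument is therefore circular: positivity is needed to write the PDE, and the PDE is used to prove positivity. Moreover, even granting the equation on $\{0<u_0<1\}$, a classical barrier comparison is delicate here because $u_0$ is only $H^2$ and the degenerate set could have positive measure. The paper resolves this by proving positivity \emph{directly at the level of the functional} (Lemma~\ref{lem:pos_minimizers}): assuming $\{u_0<\delta\}$ has positive measure, it constructs the competitor $u_0^\delta=\max(u_0,\delta)$, $u_i^\delta=u_i-(u_0^\delta-u_0)\tfrac{u_i}{1-u_0}$, which stays in $\mathcal A$, and shows $F_{\bar\bw}(\bu^\delta)-F_{\bar\bw}(\bu)\le(\ln\delta+C)\int_{\mathcal M_\delta}(u_0^\delta-u_0)\,dx<0$ for $\delta<e^{-C}$ --- contradiction; analogous truncations handle $u_0\le 1-\delta$ and $u_i\ge\delta$. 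Only then (Lemma~\ref{lem:weak}) are the Euler--Lagrange equations derived. Your parenthetical remark about ``the infinite slope of the entropy at the faces of $\mathcal A$'' points at the right mechanism, but the quantitative competitor construction is the missing ingredient, and it is needed with constants depending only on $\|\bar\bw\|_{(H^2(\Omega))^n}$ so that the bound is uniform over $\mathcal S_1(\mathcal A)$ and the fixed-point map is well defined.
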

\begin{remark}\label{rem:CL}

The weak formulation \eqref{eq:w_to_u_weak0} implies that, for all $1\leq i \leq n$, 
\begin{equation}\label{eq:eqnew}
\into (\ln u^{p+1}_i- \ln u^{p+1}_0)\psi_i - \eps \nabla u^{p+1}_0\cdot \nabla \psi_i dx =  \into \left( \bar w^{p+1}_i + \beta(1- 2 u_0^p)\right)\psi_i dx
\end{equation}
for all $\psi_i \in L^\infty(\Omega) \cap H^1(\Omega)$.
Besides, since $\ln u^{p+1}_i$, $\ln u^{p+1}_0$, $\bar w^{p+1}_i$, and $\beta(1- 2 u_0^p)$ belong to $L^\infty(\Omega)$, the first three thanks to (\ref{eq:positiv}) and the last one by assumption, and since the set $L^\infty(\Omega) \cap H^1(\Omega)$ is dense in $H^1(\Omega)$,
we obtain that (\ref{eq:eqnew}) holds for all $\psi_i \in H^1(\Omega)$. As a consequence, $u_0^{p+1}$ is the unique solution in $H^1(\Omega)$ to the problem 
	\begin{equation*} %\label{eq:EDPu0}
	\begin{aligned}
	- &\Delta u^{p+1}_0=  \bar w^{p+1}_i + \beta(1- 2 u_0^p) - \ln u^{p+1}_i + \ln u^{p+1}_0 \quad && \mbox{ in }\mathcal D'(\Omega),\\
	&\nabla u_0^{p+1} \cdot {\bf n} = 0 && \mbox{ on }\partial \Omega. 
	\end{aligned}
	\end{equation*}
\end{remark}

\medskip

From now on and in all the rest of Section~\ref{sect3}, we fix $\tau>0$, $\bu^p:=(u_1^p, \dots, u_n^p)\in \mathcal A \cap (H^2(\Omega))^n$ and denote by $u_0^p:= 1 - \sum\limits_{i=1}^n u_i^p$. 

\medskip

The proof of Theorem~\ref{thm:existence_time_discrete} makes use of Schauder's fixed point theorem as follows. We first show that for any $\tilde \bu = (\tilde u_1, \dots, \tilde u_n)\in \mathcal A$ there exists a unique solution 
$\bar \bw  = (\bar w_1, \dots, \bar w_n)\in (H^2(\Omega))^n$ to the linearised problem: for all $1\leq i \leq n$ and all $\phi_i\in H^2(\Omega)$, 
 	\begin{equation}
	\label{dis-sys12}
	\begin{split}
	\into \frac{\tilde u_i- u_i^p}{\tau} \phi_i dx &= -\into \biggl(\sum_{1 \le j \ne i \le n} K_{ij} \tilde u_i \tilde u_j \nabla (\bar w_i- \bar w_j)+ K_{i0} \tilde u_i \tilde u_0 \nabla \bar w_i \biggr) \cdot \nabla \phi_i dx \\
	& \quad -\tau \langle \bar w_i^{p+1}, \phi_i \rangle_{H^2(\Omega)},
	\end{split}
	\end{equation}
	with $\tilde u_0:= 1 - \sum\limits_{i=1}^n \tilde u_i$. We then prove that the map $\mathcal S_1: \mathcal A \to (H^2(\Omega))^n$ which associates  to $\tilde \bu \in \mathcal A$ the unique solution $\bar \bw$ to 
(\ref{dis-sys12}) is continuous. This is the object of Section~\ref{subsec:ex_w}. 

\medskip

We then show that for all $\bar \bw \in (H^2(\Omega))^n$, there exists a unique solution $\bu  \in \mathcal A \cap (H^2(\Omega))^n$ to
\begin{equation}\label{eq:w_to_u_weak01}
\sum_{i=1}^n \into (\ln u_i- \ln u_0)\psi_i + \eps \nabla u_0\cdot \nabla \psi_0 dx = \sum_{i=1}^n \into \left( \bar w_i + \beta(1- 2 u_0^p)\right)\psi_i dx,
\end{equation}
for all $\bpsi = (\psi_i)_{1\leq i \leq n} \in \mathcal B \cap (L^\infty(\Omega))^n $, with $u_0$ given by \eqref{constr}. Problem (\ref{eq:w_to_u_weak01}) 
is to be interpreted as a weak formulation associated to the relation
$$
\ln u_i - \ln u_0 = \bar w_i -\eps \Delta u_0 + \beta (1-2u_0^p).
$$
The map $\mathcal S_2:  (H^2(\Omega))^n \to \mathcal A$ which to each $\bar \bw \in (H^2(\Omega))^n$ 
associates  the unique solution $\bu\in \mathcal A$ to (\ref{eq:w_to_u_weak01}) is then shown to be continuous. These results are proved in Section~\ref{subsec:u_from_w}.

\medskip

We finally conclude by showing that the map $\mathcal S = \mathcal S_2 \circ \mathcal S_1: \mathcal A \to \mathcal A$ is such that $\mathcal S(\mathcal A)$ is 
a relatively compact subset of $(L^\infty(\Omega))^n$, so that Schauder's fixed point theorem can be used. This is the object of Section~\ref{sec:proof1}.

\subsection{Definition and continuity of the map $\mathcal S_1$}\label{subsec:ex_w}

\begin{lemma}
	\label{thm3}
	For any $\tilde \bu  \in \mathcal A$, there exists a unique solution $\bar \bw  \in (H^2(\Om))^n$ to the problem: for all $1\leq i \leq n$, 
	for all $\phi_i \in H^2(\Omega)$, 
	\begin{equation}
	\label{discr1-1}
	\begin{split}
	\into \frac{\tilde u_i-  u_i^p}{\tau} \phi_i dx &= -\into \biggl(\sum_{1 \le j \ne i \le n} K_{ij} \tilde u_i \tilde u_j \nabla (\bar{w}_i- \bar{w}_j)+ K_{i0} \tilde u_i \tilde u_0 \nabla \bar{w}_i \biggr) 
	\cdot \nabla \phi_i dx \\
	& \qquad -\tau \langle \bar{w}_i, \phi_i \rangle_{H^2(\Omega)},
	\end{split}
	\end{equation}
	where $\tilde u_0 $ satisfies \eqref{constr}.  Furthermore, there exists a constant $M_0>0$, depending only on $n$, $\tau$, and $\Omega$, such that 
	\begin{equation}\label{eq:bound}
	\|\bar \bw\|_{(H^2(\Omega))^n} \leq M_0.
	\end{equation}
\end{lemma}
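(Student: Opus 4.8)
The plan is to recognize \eqref{discr1-1} as a linear variational problem on the Hilbert space $(H^2(\Omega))^n$ and to apply the Lax--Milgram theorem, with the regularizing term $\tau\langle\cdot,\cdot\rangle_{H^2(\Omega)}$ supplying the coercivity that the degenerate diffusion part alone cannot provide. Concretely, I would introduce the bilinear form $a:(H^2(\Omega))^n\times(H^2(\Omega))^n\to\R$ defined by
\begin{equation*}
a(\bar\bw,\bphi):=\into\sum_{i=1}^n\biggl(\sum_{1\le j\ne i\le n}K_{ij}\tilde u_i\tilde u_j\nabla(\bar w_i-\bar w_j)+K_{i0}\tilde u_i\tilde u_0\nabla\bar w_i\biggr)\cdot\nabla\phi_i\,dx+\tau\sum_{i=1}^n\langle\bar w_i,\phi_i\rangle_{H^2(\Omega)},
\end{equation*}
together with the linear functional $L(\bphi):=-\tfrac1\tau\into\sum_{i=1}^n(\tilde u_i-u_i^p)\phi_i\,dx$, so that \eqref{discr1-1} reads $a(\bar\bw,\bphi)=L(\bphi)$ for all $\bphi\in(H^2(\Omega))^n$.

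Boundedness of $a$ and $L$ is routine: since $\tilde\bu\in\mathcal A$ enforces the box constraints $0\le\tilde u_i\le1$, the coefficients $\tilde u_i\tilde u_j$ and $\tilde u_i\tilde u_0$ lie in $L^\infty(\Omega)$ with norm at most one, and Cauchy--Schwarz together with $\|\nabla\cdot\|_{L^2(\Omega)}\le\|\cdot\|_{H^2(\Omega)}$ controls the diffusion terms, while the regularization term is trivially bounded; likewise $|\tilde u_i-u_i^p|\le1$ yields $|L(\bphi)|\le\tfrac{\sqrt n}{\tau}|\Omega|^{1/2}\|\bphi\|_{(H^2(\Omega))^n}$.

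The crux is coercivity. Testing the diffusion part of $a$ against $\bar\bw$ itself, I would symmetrize the off-diagonal double sum using $K_{ij}=K_{ji}$: relabelling $i\leftrightarrow j$ and averaging the two expressions gives
\begin{equation*}
\into\sum_{1\le i\ne j\le n}K_{ij}\tilde u_i\tilde u_j\nabla(\bar w_i-\bar w_j)\cdot\nabla\bar w_i\,dx=\frac12\into\sum_{1\le i\ne j\le n}K_{ij}\tilde u_i\tilde u_j|\nabla(\bar w_i-\bar w_j)|^2\,dx\ge0,
\end{equation*}
and the remaining contribution $\into\sum_iK_{i0}\tilde u_i\tilde u_0|\nabla\bar w_i|^2\,dx$ is manifestly nonnegative since $K_{i0},\tilde u_i,\tilde u_0\ge0$. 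Hence the whole diffusion part is nonnegative and one is left with $a(\bar\bw,\bar\bw)\ge\tau\|\bar\bw\|_{(H^2(\Omega))^n}^2$, so Lax--Milgram delivers a unique $\bar\bw\in(H^2(\Omega))^n$. For the quantitative bound \eqref{eq:bound} I would take $\bphi=\bar\bw$ in $a(\bar\bw,\bar\bw)=L(\bar\bw)$, combine the coercivity lower bound with the estimate on $L$, and cancel one factor of $\|\bar\bw\|_{(H^2(\Omega))^n}$ to obtain $\|\bar\bw\|_{(H^2(\Omega))^n}\le\tfrac{\sqrt n}{\tau^2}|\Omega|^{1/2}=:M_0$, which depends only on $n$, $\tau$ and $\Omega$.

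The only genuine subtlety---and the reason the $\tau$-regularization is built into the scheme---is that the diffusion form is merely positive \emph{semi}-definite: it degenerates both where some $\tilde u_i$ vanishes and on constant fields, so that without the added $\tau\langle\cdot,\cdot\rangle_{H^2(\Omega)}$ term coercivity on all of $(H^2(\Omega))^n$ would fail. Once this term is present the symmetrization above makes coercivity immediate, and I expect no further obstacle; the estimates on $a$ and $L$ are standard applications of the box constraints and Cauchy--Schwarz.
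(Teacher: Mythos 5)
Your proposal is correct and follows essentially the same route as the paper: both cast \eqref{discr1-1} as a linear variational problem on $(H^2(\Omega))^n$ and apply Lax--Milgram, with coercivity coming solely from the $\tau\langle\cdot,\cdot\rangle_{H^2(\Omega)}$ term and the cross-diffusion part shown to be positive semi-definite (your explicit symmetrization is the same computation the paper packages as the bounds $0\leq G(\tilde\bu)$, $0\leq H(\tilde\bu)$ in the sense of symmetric matrices), and the a priori bound is obtained identically by testing with $\bar\bw$ and using the box constraints on $\tilde\bu$ and $\bu^p$.
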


\begin{proof}
We fix $\tilde \bu:=(\tilde u_i)_{1\leq i \leq n} \in \mathcal A$ and  introduce the matrices 
	$G(\tilde \bu) := (G_{ij}(\tilde \bu))_{1\leq i,j \leq n} $ and $ H(\tilde \bu) := (H_{ij}(\tilde \bu))_{1\leq i,j \leq n} \in \R^{n\times n}$ defined by
	\begin{equation*}
	%\label{mobility1}
	\begin{aligned}
	G_{ij}({\tilde \bu})&:= - K_{ij}\tilde u_i \tilde u_j \qquad &&\forall \, i\neq j= 1, \dots, n, \\
	G_{ii}({\tilde \bu})&:= \sum_{1\leq j \neq i \leq n} K_{ij}\tilde u_i\tilde u_j && \forall \, i= 1, \dots, n,\\
	\end{aligned}
	\end{equation*}
	and
	\[
	H(\tilde \bu)=  \operatorname{diag}(K_{10} \tilde u_1 \tilde u_0, \ldots, K_{n0} \tilde u_n \tilde u_0).
%	\begin{pmatrix}
%	K_{10} \tilde u_1 \tilde u_0 & 0 \\
%	0 & K_{20} \tilde u_2 \tilde u_0
%	\end{pmatrix}
	\]
	Then, system \eqref{discr1-1} can be equivalently written as  
	\begin{equation}
	\label{discr1.1}
	\begin{split}
	-\frac{1}{\tau} \into 
	(\tilde \bu- \bu^p )\cdot \bphi dx &= \into \nabla \bphi \cdot G(\tilde \bu) \nabla \bar \bw dx + \into \nabla \bphi \cdot  H(\tilde \bu) \nabla \bar \bw dx + \tau  \langle \bphi, \bar \bw \rangle_{(H^2(\Omega))^n},
	\end{split}
	\end{equation}
 for all $\bphi \in (H^2(\Omega))^n$. 	Let us point out that   
	\begin{equation}\label{eq:bound4}
	0 \leq G(\tilde \bu) \leq n\overline{K} {\rm I}_n \quad \mbox{ and } 0\leq H(\tilde \bu) \leq \overline{K} {\rm I}_n
	\end{equation}
almost everywhere in $\Omega$, in the sense of symmetric matrices, with $\overline{K}:= \max\limits_{0\leq i \neq j \leq n} K_{ij}$ and ${\rm I}_n$ being the identity matrix of $\mathbb{R}^{n\times n}$.
	The existence and uniqueness of a solution to \eqref{discr1.1} is then a consequence of Lax-Milgram's theorem. In particular, taking $\bphi = \bar \bw$ in \eqref{discr1.1} gives
	\begin{align*}
	\tau \|\bar \bw\|_{(H^2(\Omega))^n}^2 & \leq \frac{1}{\tau}\sum_{i=1}^n \|\tilde u_i - u_i^p\|_{L^2(\Omega)}\|\bar w_i\|_{L^2(\Omega)}\\
	& \leq \frac{1}{\tau}\left( \sum_{i=1}^n \|\tilde u_i - u_i^p \|_{L^2(\Omega)}^2\right)^{1/2} \|\bar \bw\|_{H^2(\Omega)}. \\
	\end{align*}
	Since $\tilde \bu$ and $\bu^p$ belongs to $\mathcal A$, this implies that 
	$$
	\|\bar \bw\|_{(H^2(\Omega))^n} \leq \frac{1}{\tau^2}2 \sqrt{n|\Omega|},
	$$
	which yields estimate (\ref{eq:bound}). 
\end{proof}

Let us denote by $\mathcal S_1: \mathcal A \subset (L^\infty(\Omega))^n \to (H^2(\Omega))^n$ 
the application that associates to each $\tilde \bu \in \mathcal A$   the unique solution $\bar \bw$ to \eqref{discr1-1}. We have the following result.

\begin{lemma}\label{lem:S1cont}
The map $\mathcal S_1: \mathcal A \subset (L^\infty(\Omega))^n \to (H^2(\Omega))^n$ is continuous. 
\end{lemma}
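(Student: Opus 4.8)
The plan is to show that if $\tilde \bu^{(k)} \to \tilde \bu$ in $(L^\infty(\Omega))^n$ as $k\to\infty$ (with all iterates in $\mathcal A$), then the associated solutions $\bar \bw^{(k)}:=\mathcal S_1(\tilde \bu^{(k)})$ converge to $\bar \bw := \mathcal S_1(\tilde \bu)$ in $(H^2(\Omega))^n$. Since $\mathcal A \subset (L^\infty(\Omega))^n$ is a metric space, it suffices to argue sequentially. The key structural fact to exploit is the \emph{uniform} $H^2$-bound \eqref{eq:bound} from Lemma~\ref{thm3}: the constant $M_0$ depends only on $n$, $\tau$, and $\Omega$, hence the whole sequence $(\bar \bw^{(k)})_k$ is bounded in $(H^2(\Omega))^n$ independently of $k$. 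By reflexivity of $(H^2(\Omega))^n$ and compactness of the embeddings $H^2(\Omega)\embedded H^1(\Omega)$ and $H^2(\Omega)\embedded L^\infty(\Omega)$ (valid since $d\le 3$, as recalled in the Notation), a subsequence satisfies $\bar \bw^{(k)} \weak \bar \bw^*$ weakly in $(H^2(\Omega))^n$ and strongly in $(H^1(\Omega))^n$.

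The core step is to pass to the limit in the weak formulation \eqref{discr1.1} written for $\tilde \bu^{(k)}$ and $\bar \bw^{(k)}$, and to identify the limit $\bar\bw^*$ as a solution of \eqref{discr1.1} for the limiting datum $\tilde \bu$. Because $\tilde u_i^{(k)} \to \tilde u_i$ in $L^\infty(\Omega)$, the matrix entries $G_{ij}(\tilde \bu^{(k)})$ and $H_{ij}(\tilde \bu^{(k)})$, which are quadratic polynomials in the $\tilde u_i^{(k)}$, converge to $G_{ij}(\tilde \bu)$, $H_{ij}(\tilde \bu)$ strongly in $L^\infty(\Omega)$. The products that must converge are of the form $G(\tilde \bu^{(k)})\nabla \bar\bw^{(k)}$ tested against $\nabla \bphi$; here I would combine the strong $L^\infty$-convergence of the coefficients with the strong $L^2$-convergence of the gradients $\nabla \bar w_i^{(k)}$ (from $H^1$-compactness) to pass to the limit in each term. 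The term $\tau\langle \bphi, \bar\bw^{(k)}\rangle_{(H^2(\Omega))^n}$ passes to the limit by weak $H^2$-convergence, and the left-hand side $-\frac1\tau\into(\tilde\bu^{(k)}-\bu^p)\cdot\bphi$ converges by the $L^\infty$ (hence $L^2$) convergence of the data. This identifies $\bar\bw^*$ as \emph{a} solution of \eqref{discr1.1} for $\tilde\bu$; by the uniqueness part of Lemma~\ref{thm3}, $\bar\bw^* = \bar\bw = \mathcal S_1(\tilde\bu)$.

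Finally I would upgrade weak to strong convergence and remove the subsequence. For the strong $H^2$-convergence, the natural route is to test the difference of the two equations (for $\bar\bw^{(k)}$ and $\bar\bw$) with $\bphi = \bar\bw^{(k)} - \bar\bw$ and use coercivity of the symmetric form: the $\tau\|\bar\bw^{(k)}-\bar\bw\|_{(H^2(\Omega))^n}^2$ term is controlled by the cross-terms, which involve the coefficient differences $(G(\tilde\bu^{(k)})-G(\tilde\bu))\nabla\bar\bw^{(k)}$ paired against $\nabla(\bar\bw^{(k)}-\bar\bw)$. Using the uniform $H^2$-bound on $\bar\bw^{(k)}$ and the $L^\infty$-smallness of the coefficient differences, these cross-terms vanish, forcing $\bar\bw^{(k)}\to\bar\bw$ strongly in $(H^2(\Omega))^n$. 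Since the limit $\bar\bw$ is the same for every subsequence (by uniqueness), a standard subsequence-of-subsequence argument yields convergence of the full sequence. I expect the main technical obstacle to be the careful handling of the bilinear terms $G(\tilde\bu^{(k)})\nabla\bar\bw^{(k)}$: one must ensure the product of a strongly-$L^\infty$-convergent coefficient and a strongly-$L^2$-convergent gradient (rather than merely a weakly convergent one), which is precisely why the compact $H^2\embedded H^1$ embedding and the $d\le 3$ assumption are essential.
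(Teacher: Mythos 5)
Your proposal is correct and its decisive step---testing the difference of the two weak formulations with $\bphi=\bar\bw^{(k)}-\bar\bw$, dropping the nonnegative $G$- and $H$-terms, and controlling the cross terms via the $L^\infty$-Lipschitz dependence of the coefficient matrices together with the uniform bound \eqref{eq:bound}---is exactly the paper's proof, which carries it out directly as a quantitative estimate $\|\bar\bw^1-\bar\bw^2\|_{(H^2(\Omega))^n}\le C'\sum_{i=1}^n\|\tilde u_i^1-\tilde u_i^2\|_{L^\infty(\Omega)}$. The preliminary weak-compactness and limit-identification layer in your plan is therefore redundant: that final coercivity estimate already yields strong $(H^2(\Omega))^n$-convergence of the full sequence, with no subsequence extraction or uniqueness argument needed.
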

\begin{proof}
 Let $\tilde \bu^1, \tilde \bu^2 \in \mathcal A$ and set $\bar \bw^1 = \mathcal S_1(\tilde \bu^1)$ as well as $\bar \bw^2 = \mathcal S_1(\tilde \bu^2)$. 
For all $1\leq i\neq j \leq n$ we have  
	\begin{align*}
	G_{ij}(\tilde \bu^1) - G_{ij}(\tilde \bu^2) & = - K_{ij} \left[ \tilde u_i^1 (\tilde u_j^1 - \tilde u_j^2) + \tilde u_j^2( \tilde u_i^1- \tilde u_i^2)\right],\\
	G_{ii}(\tilde \bu^1) - G_{ii}(\tilde \bu^2) & = \sum_{1\leq j \neq i \leq n} K_{ij}\left[ \tilde u_i^1 (\tilde u_j^1 - \tilde u_j^2) + \tilde u_j^2( \tilde u_i^1- \tilde u_i^2)\right],\\
	H_{ii}(\tilde \bu^1) - H_{ii}(\tilde \bu^2) & = K_{i0} \left[ \tilde u_i^1 (\tilde u_0^1 - \tilde u_0^2) + \tilde u_0^2( \tilde u_i^1- \tilde u_i^2)\right],\\
	\end{align*}
	which yield the Lipschitz estimates
	\begin{align*}
	\|G_{ij}(\tilde \bu^1) - G_{ij}(\tilde \bu^2)\|_{L^\infty(\Omega)} & \leq \overline{K}  \left( \|\tilde u_j^1 - \tilde u_j^2\|_{L^\infty(\Omega)} + \|\tilde u_i^1- \tilde u_i^2\|_{L^\infty(\Omega)}\right),\\
	\|G_{ii}(\tilde \bu^1) - G_{ii}(\tilde \bu^2)\|_{L^\infty(\Omega)} & = \overline{K}\left[(n-1)\|\tilde u_i^1- \tilde u_i^2\|_{L^\infty(\Omega)} + \sum_{1\leq j \neq i \leq n} \|\tilde u_j^1 - \tilde u_j^2\|_{L^\infty(\Omega)} \right],\\
	\|H_{ii}(\tilde \bu^1) - H_{ii}(\tilde \bu^2)\|_{L^\infty(\Omega)} & = \overline{K} \left[ \|\tilde u_0^1 - \tilde u_0^2\|_{L^\infty(\Omega)} + \|\tilde u_i^1- \tilde u_i^2\|_{L^\infty(\Omega)}\right].\\
	\end{align*}
	Since $\|\tilde u_0^1 - \tilde u_0^2\|_{L^\infty(\Omega)} \leq \sum\limits_{i=1}^n  \|\tilde u_i^1- \tilde u_i^2\|_{L^\infty(\Omega)}$,  there exists a constant $C>0$,  only depending on $n$ and $\overline{K}$, such that
	\begin{equation}\label{eq:bound1}
	- C \left( \sum_{i=1}^n \|\tilde u_i^1- \tilde u_i^2\|_{L^\infty(\Omega)}\right) {\rm I} \leq G(\tilde \bu^1) - G(\tilde \bu^2) \leq C \left( \sum_{i=1}^n \|\tilde u_i^1- \tilde u_i^2\|_{L^\infty(\Omega)}\right) {\rm I}
	\end{equation}
	and 
	\begin{equation}\label{eq:bound2}
	- C \left( \sum_{i=1}^n \|\tilde u_i^1- \tilde u_i^2\|_{L^\infty(\Omega)}\right) {\rm I} \leq H(\tilde \bu^1) - H(\tilde \bu^2) \leq C \left( \sum_{i=1}^n \|\tilde u_i^1- \tilde u_i^2\|_{L^\infty(\Omega)}\right) {\rm I},
	\end{equation}
		almost everywhere in $\Omega$, in the sense of symmetric matrices.
	Then, for all $\bphi \in (H^2(\Omega))^n$,
\begin{align*}
&\frac{1}{\tau} \into 
(\tilde \bu^1- \tilde \bu^2 )\cdot \bphi dx \\
&= -\into \nabla \bphi \cdot \left( G(\tilde \bu^1) \nabla \bar \bw^1 - G(\tilde \bu^2) \nabla \bar \bw^2\right) dx -\into \nabla \bphi \cdot  \left(H(\tilde \bu^1) \nabla \bar \bw^1 - H(\tilde \bu^2) \nabla \bar \bw^2 \right) dx \\
& \qquad - \tau  \langle \bphi, \bar \bw^1 - \bar \bw^2 \rangle_{(H^2(\Omega))^n}.
\end{align*}
Choosing $\bphi = \bar \bw^1 - \bar \bw^2$ in the above equality and using \eqref{eq:bound}, \eqref{eq:bound1}, \eqref{eq:bound2}, and \eqref{eq:bound4} gives the existence of a constant $C'>0$, depending only on $n$, $\overline{K}$, $|\Omega|$, and $\tau$ such that
\begin{align*}
 & \tau \|\bar \bw^1 - \bar \bw^2\|_{(H^2(\Omega))^n}^2 \\
 & =
 - \frac{1}{\tau} \into (\tilde \bu^1- \tilde \bu^2 )\cdot (\bar \bw^1 - \bar \bw^2) dx  \\
 & \qquad -\into \nabla  (\bar \bw^1 - \bar \bw^2) \cdot \left( G(\tilde \bu^1) \nabla (\bar \bw^1 - \bar \bw^2) \right) dx
- \into \nabla  (\bar \bw^1 - \bar \bw^2) \cdot \left( G(\tilde \bu^1)  - G(\tilde \bu^2) \right)\nabla \bar \bw^2 dx \\
 & \qquad -\into \nabla  (\bar \bw^1 - \bar \bw^2) \cdot \left( H(\tilde \bu^1) \nabla (\bar \bw^1 - \bar \bw^2) \right) dx
  - \into \nabla  (\bar \bw^1 - \bar \bw^2) \cdot \left( H(\tilde \bu^1)  - H(\tilde \bu^2) \right)\nabla \bar \bw^2 dx\\
& \leq - \frac{1}{\tau} \into (\tilde \bu^1- \tilde \bu^2 )\cdot (\bar \bw^1 - \bar \bw^2)dx - \into \nabla  (\bar \bw^1 - \bar \bw^2) \cdot \left( G(\tilde \bu^1)  - G(\tilde \bu^2) \right)\nabla \bar \bw^2 dx\\ 
& \qquad - \into \nabla  (\bar \bw^1 - \bar \bw^2) \cdot \left( H(\tilde \bu^1)  - H(\tilde \bu^2) \right)\nabla \bar \bw^2 dx \\
& \leq C' \left( \sum_{i=1}^n \|\tilde u_i^1- \tilde u_i^2\|_{L^\infty(\Omega)}\right) \|\bar \bw^1 - \bar \bw^2\|_{(H^2(\Omega))^n}.\\
\end{align*}
Thus,
$$
\|\bar \bw^1 - \bar \bw^2\|_{(H^2(\Omega))^n} \leq C' \left( \sum_{i=1}^n \|\tilde u_i^1- \tilde u_i^2\|_{L^\infty(\Omega)}\right),
$$
which yields the continuity of  $\mathcal S_1$. 
\end{proof}

\subsection{Definition and continuity of the map $\mathcal S_2$}\label{subsec:u_from_w}
% In this subsection we discuss how to recover the densities $u_i$ given $w_i$. 
% 
% In previous works on cross-diffusion systems with volume exclusion there was an algebraic relation which allowed to explicitly express the $u_i$ in terms of the $w_i$. 
% Moreover, this was usually in a way which ensured the constraints $0 \le u_i \le 1$ which was coined \emph{boundedness-by-entropy} method. 
% In our case, the situation is more involved as due to gradient term in the entropy we obtain an elliptic system for $u_i$.
% 
% We will rely on the variational 
% structure of this system of show existence of $u_i$ and their uniqueness.

The aim of this section to prove the existence and uniqueness of a solution $\bu  \in \mathcal A$ to the problem
\begin{equation}
\label{discr2}
\begin{split}
\bar w_i= \ln u_i- \ln u_0+ \eps \Delta u_0- \beta(1- 2 u_0^p), \quad i=1,\ldots, n,
%\bar w_2= \ln u_2- \ln u_0+ \eps \Delta u_0- \frac{1}{\eps}(1- 2 u_0^p).
\end{split}
\end{equation}
when  $\bar \bw \in (H^2(\Omega))^n$ is given. 

\medskip

In the case when $\eps = \beta = 0$, there is an algebraic relation which allows to explicitly express $\bu$ in terms of  $\bar \bw$ and ensures that $\bu \in \mathcal A$ (the boundedness-by-entropy method \cite{BDPS2010,Juengel2015boundedness}).
%	,  for standard cross-diffusion systems with volume exclusion, 
%This algebraic relation is at the heart of the boundedness-by-entropy method~\ref{Juengel2015boundedness} for standard cross-diffusion systems, and enables 
%to guarantee that the constraints $0 \le u_i \le 1$ are satisfied.} 
In our case, the situation is more involved, since, due to gradient term in the entropy, 
the densities $u_i$ are solutions to the nonlinear coupled elliptic system \eqref{discr2}.

%In particular, once we obtain $\bar\bw = (\bar w_1, \ldots, \bar w_n)$ through Theorem \ref{thm3}, we want to solve system \eqref{discr2} in order to obtain a solution $\bu = (u_1, \ldots, u_n)$. 
More precisely, we will identify  the solution $\bu$ to \eqref{discr2} as the unique weak solution in $\mathcal A \cap \mathcal B$ to the variational problem
\begin{align}\label{eq:w_to_u_weak}
\sum_{i=1}^n \into (\ln u_i- \ln u_0)\phi_i + \eps \nabla u_0\cdot \nabla \phi_0 dx = \sum_{i=1}^n \into \left( \bar w_i + \beta(1- 2 u_0^p)\right)\phi_i dx,
\end{align}
for all $\bphi  \in \mathcal B \cap (L^\infty(\Omega))^n $ which 
%
%The solution $\bu$ to \eqref{eq:w_to_u_weak} will be 
will be equivalently characterized as the unique solution to the minimization problem
\begin{equation}\label{eq:var_problem}
\min_{\bv \in \mathcal A}  F_{\bar \bw}(\bv)
\end{equation}
where for all $\bv \in \mathcal A$ we define
$$
F_{\bar \bw}(\bv) = 
\begin{cases}
 +\infty & \mbox{ if } \bv\notin \mathcal B\\
 \into \sum\limits_{i= 0}^n v_i \ln v_i+ \frac{\eps}{2} |\nabla v_0|^2- \sum\limits_{i=1}^n v_i f_i dx & \mbox{ otherwise},
\end{cases} 
$$
with  $f_i:= \bar w_i + \beta(1- 2 u_0^p)$ for all $1\leq i \leq n$.

\medskip

The goal of this section is to rigorously prove all these claims. To this aim, we will proceed into three steps: first we show that minimizers to (\ref{eq:var_problem}) exist, then that these minimizers are solutions to (\ref{eq:w_to_u_weak}),
and finally that the solution to \eqref{eq:w_to_u_weak} is unique.

%In order to define a fixed point operator and we have to recover $(\tilde u_1, \tilde u_2)$ from $(\bar w_1, \bar w_2)$. While in many previous works using the boundedness by entropy method one could obtain an implicit formula for this, our situation is different.... On the other hand .... define as minimum

\begin{lemma} For all $\bar \bw \in (H^2(\Omega))^n$,  problem \eqref{eq:var_problem} admits at least one minimizer $\bu \in \mathcal A$. 
% \in \mathcal A$
% Besides, there exists $C>0$ which depends only on $n$ and $\Omega$ such that
% 	\begin{align}\label{eq:H1bnd_minimizer}
% 	\|u_0\|_{H^1(\Omega)} \le C \l(1 + \sum_{i=1}^n \|f_i\|_{L^2(\Omega)}\r),
% 	\end{align}
% 	where $C$ depends only on $n$ and $|\Omega|$.
\end{lemma}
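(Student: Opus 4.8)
The plan is to apply the direct method of the calculus of variations. First I would verify that the infimum is finite. Since every $\bv \in \mathcal A$ satisfies the box constraints $0 \le v_i \le 1$ for all $0 \le i \le n$, and $s \mapsto s\ln s$ is bounded on $[0,1]$, the entropy contribution $\into \sum_{i=0}^n v_i \ln v_i$ is bounded uniformly in $\bv$. Because $\bar \bw \in (H^2(\Omega))^n \embedded (L^\infty(\Omega))^n$ and $u_0^p \in L^\infty(\Omega)$, each $f_i = \bar w_i + \beta(1-2u_0^p)$ lies in $L^\infty(\Omega)$, so the linear term $-\into \sum_{i=1}^n v_i f_i$ is likewise bounded on $\mathcal A$. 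As the gradient term $\frac{\eps}{2}\into|\nabla v_0|^2$ is nonnegative, $F_{\bar\bw}$ is bounded below on $\mathcal A$. Moreover $\bu^p \in \mathcal A \cap (H^2(\Omega))^n \subset \mathcal A \cap \mathcal B$ gives $F_{\bar\bw}(\bu^p) < +\infty$, so $m := \inf_{\bv \in \mathcal A} F_{\bar\bw}(\bv) \in \mathbb R$.

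Next I would fix a minimizing sequence $(\bv^k)_k \subset \mathcal A$ with $F_{\bar\bw}(\bv^k) \to m$; since $m < +\infty$, after discarding finitely many terms we may assume $\bv^k \in \mathcal B$, i.e. $v_0^k \in H^1(\Omega)$ for all $k$. The box constraints provide a uniform $L^\infty(\Omega)$ bound on each $v_i^k$, while $F_{\bar\bw}(\bv^k) \to m$ together with the two-sided bounds on the entropy and linear terms yields a uniform bound on $\into |\nabla v_0^k|^2$; hence $(v_0^k)_k$ is bounded in $H^1(\Omega)$. Passing to a subsequence (not relabeled), I would extract $v_i^k \weak v_i$ weakly-$*$ in $L^\infty(\Omega)$ for each $1 \le i \le n$ and $v_0^k \weak v_0$ weakly in $H^1(\Omega)$; by the compact Rellich embedding $H^1(\Omega) \embedded L^2(\Omega)$, also $v_0^k \to v_0$ strongly in $L^2(\Omega)$ and a.e. in $\Omega$ along a further subsequence. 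Identifying weak limits shows $v_0 = 1 - \sum_{i=1}^n v_i$, and the constraints $v_i \ge 0$ and $v_0 \ge 0$ (convex and strongly closed, hence weakly closed in $L^2(\Omega)$) pass to the limit, so $\bu := (v_i)_{1\le i\le n} \in \mathcal A$; since $v_0 \in H^1(\Omega)$ we also have $\bu \in \mathcal B$.

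Finally I would establish lower semicontinuity of $F_{\bar\bw}$ term by term along this sequence. The linear term is weakly continuous because $v_i^k \weak v_i$ in $L^2(\Omega)$ and $f_i \in L^2(\Omega)$. The gradient term $\frac{\eps}{2}\into|\nabla v_0|^2$ is convex and continuous on $H^1(\Omega)$, hence weakly lower semicontinuous, so $\liminf_{k}\into |\nabla v_0^k|^2 \ge \into |\nabla v_0|^2$. For the entropy, the $v_0$-contribution passes to the limit by dominated convergence, using the a.e. convergence $v_0^k \to v_0$ and the boundedness of $s \mapsto s\ln s$ on $[0,1]$; for the remaining indices, convexity of $s \mapsto s \ln s$ makes $v \mapsto \into v \ln v$ convex and strongly lower semicontinuous on $L^2(\Omega)$, hence weakly sequentially lower semicontinuous (via Mazur's lemma), giving $\liminf_{k}\into v_i^k \ln v_i^k \ge \into v_i \ln v_i$. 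Combining these, $F_{\bar\bw}(\bu) \le \liminf_{k} F_{\bar\bw}(\bv^k) = m$, and since $\bu \in \mathcal A$ it is the desired minimizer.

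The main obstacle I anticipate is the lower semicontinuity of the entropy under the merely weak convergence available for $v_1,\dots,v_n$: there is no a.e. convergence for these components, so one must rely precisely on the convexity of $s \mapsto s\ln s$ rather than on any pointwise argument. By contrast, the compactness required both to fix the constraint $v_0 = 1 - \sum_{i=1}^n v_i$ in the limit and to pass the $v_0$-entropy term to the limit is supplied by the uniform gradient bound and Rellich's theorem, which is why the problematic weak-convergence issue is confined to the non-gradient components.
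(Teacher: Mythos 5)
Your proof is correct and follows the same overall strategy as the paper's (the direct method: a lower bound for $F_{\bar\bw}$ on $\mathcal A$, a minimizing sequence, compactness for the zeroth component coming from the gradient term, and term-by-term lower semicontinuity). The one place where you genuinely diverge is the treatment of the entropy terms $\into v_i \ln v_i\,dx$ for $1\le i\le n$. The paper extracts, for \emph{every} component $u_i^{(m)}$, convergence that is claimed to be strong in $L^p(\Omega)$ for all $p<\infty$ and a.e.\ in $\Omega$, and then passes to the limit in all entropy terms by dominated convergence. For $i\ge 1$ there is no gradient bound, so only weak-$*$ compactness in $L^\infty(\Omega)$ is actually available, and the strong/a.e.\ convergence is not justified for those components (a rapidly oscillating admissible sequence converges weak-$*$ but not strongly); dominated convergence is therefore only legitimate for the $u_0$-term, where Rellich applies. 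Your replacement --- convexity of $s\mapsto s\ln s$ on $[0,1]$, hence weak sequential lower semicontinuity of $v\mapsto \into v\ln v\,dx$ on the convex set $\{0\le v\le 1\}$ via Mazur's lemma --- is exactly what is needed and closes this gap; lower semicontinuity (rather than continuity) suffices because these terms enter $F_{\bar\bw}$ with the favorable sign. Two cosmetic remarks: state the convexity/semicontinuity argument on the convex set $\{0\le v\le 1\}$ (or extend the integrand by $+\infty$ off $[0,1]$), since $s\ln s$ is undefined for $s<0$; and your choice of $\bu^p$ as a competitor with finite energy is fine, though the paper's choice $\mathbf 0\in\mathcal A$ with $F_{\bar\bw}(\mathbf 0)=0$ is marginally simpler.
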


\begin{proof} Let $\bar \bw \in (H^2(\Omega))^n$. For all $1\leq i \leq n$, let $f_i:= \bar w_i + \beta(1- 2 u_0^p) \in H^2(\Omega) \subset L^\infty(\Omega)$.
Let us first show that $F_{\bar \bw}$ is bounded from below on $\mathcal A$. Fix $\bu = (u_i)_{1\leq i \leq n} \in \mathcal A$. Since  $x\ln x- x+ 1 \ge 0$  for all $x \in [0, 1]$ and since $\displaystyle -\into u_i \geq -|\Om|$ for all $i=0,\ldots, n$, we have
	\begin{equation}\label{eq:lower_bound_log}
	\into \sum_{i=0}^n u_i \ln u_i dx \ge \into \sum_{i=0}^n (u_i- 1) dx= -n|\Om|.
	\end{equation}
	Moreover, 
\begin{equation}\label{eq:est2}
-\into u_i f_i dx \ge - \|f_i\|_{L^\infty(\Omega)} |\Om|,
\end{equation}
for all $1\leq i \leq n$.
Collecting these estimates gives the existence of a constant $C>0$, which only depends on $n$ and $\Omega$, such that 
	\begin{equation*}
	%\label{E1}
	F_{\bar \bw}(\bu) \ge - C \left( 1 + \sum_{i=1}^n \|f_i\|_ {L^\infty(\Omega)}\right).
	\end{equation*}
This shows that $F_{\bar \bw}$ is bounded from below on $\mathcal A$. Thus, $\inf_{\mathcal A} F_{\bar \bw} > -\infty$. Besides, $\inf_{\mathcal A} F_{\bar \bw} \leq F_{\bar \bw}(\mathbf 0) = 0$, as $\mathbf 0 \in \mathcal A$.
Thus, there exists a minimizing sequence $({\bf u}^{(m)})_{m \in \N} \subset \mathcal A$ such that $(F_{\bar \bw}({\bf u}^{(m)}))_{m\in\mathbb{N}}$ is bounded and
	\[
	\lim_{m \to \infty}  F_{\bar \bw}({\bf u}^{(m)})= \inf_{\mathcal A} F_{\bar \bw}.
	\]
Using estimates (\ref{eq:lower_bound_log}) and (\ref{eq:est2}) we obtain that $\left( \left\| \nabla u_0^{(m)} \right\|_{L^2(\Omega)}\right)_{m\in \mathbb{N}}$ is bounded as well.
Thus, up to the extraction of a subsequence that we still denote by ${\bf u}^{(m)}$ for the sake of simplicity, there exists $\bu  \in \mathcal A \cap \mathcal B$ such that, as $m \to +\infty$,  
\begin{align*}
u_i^{(m)} & \mathop{\to}  u_i &\mbox{ weakly-* in }L^\infty(\Omega), \;\mbox{ strongly in }L^p(\Omega)\mbox{ for all }1\leq p<+\infty \mbox{ and a.e. in }\Omega, \\
u_0^{(m)} & \mathop{\to}  u_0:= 1 - \sum_{i=1}^n u_i &\mbox{ strongly in }L^p(\Omega)\mbox{ for all }1\leq p<+\infty \mbox{ and a.e. in }\Omega, \\
\nabla u_0^{(m)} & \mathop{\rightharpoonup}  \nabla u_0 &\text{ weakly in }(L^2(\Omega))^d.\\
\end{align*}
Since the function $[0,1] \ni x \mapsto x\ln x$ is continuous and bounded in $[0,1]$, the Lebesgue dominated convergence theorem yields that for all $0\leq i \leq n$  
$$
\into u_i^{(m)} \ln u_i^{(m)} dx \mathop{\to}  \into u_i\ln u_i dx,\quad \text{ as }m\to +\infty.
$$
Furthermore, it holds that 
$$
\into |\nabla u_0|^2 dx \leq \mathop{\liminf}_{m\to +\infty} \into |\nabla u_0^{(m)}|^2 dx,
$$
and finally 
$$
\into f_i u_i^{(m)} dx \mathop{\to} \into f_i u_i dx,\quad \text{ as }m\to +\infty.
$$
This implies
$$
F_{\bar \bw}(\bu) \leq \mathop{\liminf}_{m\to +\infty} F_{\bar \bw}(\bu^{(m)}) = \mathop{\inf}_{\mathcal A} F_{\bar \bw}, 
$$
which entails that $\bu$ is necessarily a minimizer of $F_{\bar \bw}$ on $\mathcal A$. 	
\end{proof}

\begin{lemma}\label{lem:pos_minimizers} For all $\bar \bw \in (H^2(\Omega))^n$  there exists $\delta_{\bar \bw} >0$ such that for any minimizer $\bu$ to 
\eqref{eq:var_problem} it holds
$$
 u_i \geq \delta_{\bar \bw} \quad \forall \, 1\leq i \leq n, \quad 1 - \delta_{\bar \bw} \ge u_0:= 1 - \sum_{i=1}^n u_i \geq \delta_{\bar \bw}, \quad \mbox{a.e. in }\Omega.  
$$
Besides, for all $N>0$, there exists $\delta >0$ which only depends on $n$, $\Omega$, $\tau$, $\beta$, and $N$, such that for all $\bar \bw \in (H^2(\Omega))^n$ with $\|\bar \bw\|_{(H^2(\Omega))^n} \leq N$ and
for any minimizer $\bu$ to \eqref{eq:var_problem}  it holds that
$$
 u_i \geq \delta \quad \forall \, 1\leq i \le n, \quad 1 - \delta \ge u_0:= 1 - \sum_{i=1}^n u_i \geq \delta \quad \mbox{a.e. in }\Omega.  
$$
\end{lemma}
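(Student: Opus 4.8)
The plan is to reduce the vector minimization \eqref{eq:var_problem} to a scalar two-phase Cahn--Hilliard problem for $u_0$, and then extract the separation bounds from the convexity of the reduced functional together with a one-sided obstacle perturbation that makes the gradient term cooperate.

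First I would perform a \emph{Gibbs reduction}. Fix a minimizer $\bu$ (which, by the previous lemma, lies in $\mathcal A\cap\mathcal B$, so $u_0=1-\sum_{i=1}^n u_i\in H^1(\Omega)$), and set $S:=\sum_{i=1}^n e^{f_i}\in L^\infty(\Omega)$, noting $S\ge n\,e^{-\max_i\|f_i\|_{L^\infty(\Omega)}}$. For fixed $u_0$, the components $(u_i)_{1\le i\le n}$ enter $F_{\bar\bw}$ only through the strictly convex pointwise entropy $\sum_{i=1}^n(u_i\ln u_i-u_i f_i)$ under the simplex constraint $u_i\ge0$, $\sum_{i=1}^n u_i=1-u_0$, whose unique pointwise minimizer is the Gibbs profile $u_i=(1-u_0)e^{f_i}/S$. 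Since the competitor $\big(u_0,(1-u_0)e^{f_i}/S\big)$ lies in $\mathcal A\cap\mathcal B$ with the same $u_0$ (hence the same gradient and $u_0\ln u_0$ contributions), minimality forces $u_i=(1-u_0)e^{f_i}/S$ a.e.\ for all $1\le i\le n$. Substituting back collapses $F_{\bar\bw}$ to the scalar functional
\[
\widetilde F(v)=\into\Big[\Psi(v)+\tfrac{\eps}{2}|\nabla v|^2-(1-v)\ln S\Big]\,dx,\qquad \Psi(v):=v\ln v+(1-v)\ln(1-v),
\]
and testing against the full competitors $\big(v,(1-v)e^{f_i}/S\big)$ shows that $u_0$ minimizes $\widetilde F$ over $\{v\in H^1(\Omega):0\le v\le1\}$.

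Next I would use that $\widetilde F$ is convex and that the logarithmic potential has increasing derivative $\Psi'(v)=\ln\frac{v}{1-v}$ blowing up at the endpoints. For the lower bound take the admissible competitor $v_\delta:=\max(u_0,\delta)=u_0+(\delta-u_0)^+\in H^1(\Omega)\cap[0,1]$, for which $\nabla v_\delta=\nabla u_0\,\mathbf 1_{\{u_0\ge\delta\}}$, so truncation only lowers the Dirichlet energy. Convexity and minimality give that the right derivative of $t\mapsto\widetilde F(u_0+t(v_\delta-u_0))$ at $t=0^+$ is $\ge0$; evaluating it (the entropy part via monotone convergence of difference quotients, which simultaneously forces $\ln u_0\in L^1$) yields
\[
0\le -\eps\int_{\{u_0<\delta\}}|\nabla u_0|^2\,dx+\int_{\{u_0<\delta\}}\big(\Psi'(u_0)+\ln S\big)(\delta-u_0)\,dx .
\]
Dropping the nonpositive gradient term and using, on $\{u_0<\delta\}$, the uniform bounds $\Psi'(u_0)\le\ln\frac{\delta}{1-\delta}$ and $\ln S\le\ln n+\max_i\|f_i\|_{L^\infty(\Omega)}$, we get that for $\delta$ small enough that $\ln\frac{\delta}{1-\delta}+\ln n+\max_i\|f_i\|_{L^\infty(\Omega)}<0$ the integrand is $\le0$, forcing $\int_\Omega(\delta-u_0)^+\,dx=0$, i.e.\ $u_0\ge\delta$ a.e.; the symmetric competitor $\min(u_0,1-\delta)$ gives $u_0\le1-\delta$. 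The Gibbs identity together with $u_0\le1-\delta$ then yields $u_i=(1-u_0)e^{f_i}/S\ge\delta\,n^{-1}e^{-2\max_i\|f_i\|_{L^\infty(\Omega)}}$, so all lower bounds hold with a common constant $\delta_{\bar\bw}$.

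For uniformity, note $f_i=\bar w_i+\beta(1-2u_0^p)$ with $0\le u_0^p\le1$, so the embedding $H^2(\Omega)\hookrightarrow L^\infty(\Omega)$ gives $\max_i\|f_i\|_{L^\infty(\Omega)}\le C_\Omega\|\bar\bw\|_{(H^2(\Omega))^n}+\beta$; thus whenever $\|\bar\bw\|_{(H^2(\Omega))^n}\le N$ the threshold $\delta$ and the resulting lower bound on the $u_i$ depend only on $n,\Omega,\beta,N$ (consistent with, and sharper than, the dependence on $\tau$ allowed in the statement). The step I expect to be the main obstacle is the rigorous justification of this first-order inequality in the presence of the infinite slope of $\Psi$ at $0$ and $1$: this is exactly where convexity and monotone convergence must be invoked with care, the saving feature being that on the support of each one-sided perturbation the \emph{opposite} endpoint stays bounded away, so only one logarithm is singular and minimality itself forces its integrability. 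The Gibbs reduction also requires some care to guarantee that the reduced competitors remain in $\mathcal A\cap\mathcal B$ and that all integrals are well defined.
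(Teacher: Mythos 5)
Your argument is correct, but it follows a genuinely different route from the paper. The paper works directly with the vector functional: it assumes for contradiction that one of the sets $\{u_0<\delta\}$, $\{u_0>1-\delta\}$, $\{u_i<\delta\}$ has positive measure, builds the explicit competitor $u_0^\delta=\max(u_0,\delta)$ (resp.\ $\min$, resp.\ $u_1^\delta=\max(u_1,\delta)$) together with the proportional redistribution $u_i^\delta=u_i\,\frac{1-u_0^\delta}{1-u_0}$ of the remaining species, and estimates $F_{\bar\bw}(\bu^\delta)-F_{\bar\bw}(\bu)\le(\ln\delta+C)\int(u_0^\delta-u_0)$ via the convexity inequality $a\ln a-b\ln b\le(\ln a+1)(a-b)$, obtaining a contradiction for $\delta<e^{-C}$; this requires three separate steps, one for each type of degeneracy. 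You instead first extract the pointwise Gibbs structure $u_i=(1-u_0)e^{f_i}/S$ of any minimizer (which the paper only obtains a posteriori, in the form $u_i=u_je^{\bar w_i-\bar w_j}$, from the Euler--Lagrange system in Lemma~\ref{lemma2}), reduce to a scalar obstacle problem for $u_0$ with the standard logarithmic potential $\Psi(v)=v\ln v+(1-v)\ln(1-v)$, and read off the separation from the sign of the one-sided first variation at the truncated competitors; the bounds on $u_1,\dots,u_n$ then come for free from the Gibbs identity, so the paper's Step~3 disappears entirely. Your reduction buys a shorter, more structured proof and makes contact with classical scalar Cahn--Hilliard truncation arguments, at the price of relying on the exact decoupling of the entropy in the fast variables (it would not survive a bulk energy mixing the $u_i$ nonlinearly, whereas the paper's redistribution competitor is more robust). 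The delicate points you flag -- admissibility of the Gibbs competitor in $\mathcal A\cap\mathcal B$, and the monotone-convergence justification of the first-variation inequality despite the infinite slope of $\Psi$ at the endpoints (with minimality itself forcing $\ln u_0\,(\delta-u_0)^+\in L^1$) -- are handled correctly; note that the paper sidesteps the second issue altogether by never differentiating $F_{\bar\bw}$ before positivity is known, using only finite energy differences of competitors.
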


\begin{proof}
Let $\bar \bw  \in (H^2(\Omega))^n$ and for all $1\leq i \leq n$ let us denote by $f_i := \bar w_i + \beta(1- 2 u_0^p) \in H^2(\Omega) \subset L^\infty(\Omega)$. Let $\bu $ be a minimizer of $F_{\bar \bw}$ on $\mathcal A$. 

\medskip

\bfseries Step~1: \normalfont Let us first show that there exists $1 > \delta >0$, which only depends on $n$, $\Omega$, $\beta$, $\tau$, and $\sum\limits_{i=1}^n \|\bar w_i\|_{L^\infty(\Omega)}$,
such that $\delta \leq u_0$ almost everywhere in $\Omega$. The precise value of $\delta$ will be specified later in the proof. 

\medskip

We reason by contradiction and assume that the Lebesgue measure of the set
$$
\mathcal M_\delta := \{ x \in \Omega \; : \; u_0(x) < \delta \}
$$ is positive. 
Now, let us define 
\begin{align} \nonumber %\label{eq:u0delta}
u_0^\delta &:= \max (u_0, \delta),\\
\begin{split}\label{eq:u1u2delta}
u_i^\delta &:= u_i - (u_0^\delta - u_0)\frac{u_i}{1-u_0},\quad i=1,\ldots, n,
\end{split}
\end{align}
and $\bu^\delta := (u_1^\delta, \ldots, u_n^\delta)$. In (\ref{eq:u1u2delta}), since $1-u_0 = \sum\limits_{j=1}^n u_j\geq u_i \geq 0$, the function $\frac{u_i}{1-u_0}$ is well-defined almost everywhere using the convention that 
$\frac{u_i}{1-u_0} = 0$ as soon as $u_i = 0$. By definition, it holds that $1 \geq u_0^\delta \geq 0$ and $u_0^\delta + \sum\limits_{i=1}^nu_i^\delta = 1$. 
Furthermore,  $u_i^\delta(x) =0$ for all $x\in \Omega$ such that $u_i(x) = 0$. For all $x\in \Omega$ such that $u_i(x)>0$,  it follows that $1-u_0(x) \geq u_i(x)>0$ and 
$$
	u_i^\delta(x) = u_i(x)\left(1 - \frac{u_0^\delta(x) - u_0(x)}{1-u_0(x)}\right) \ge 0, \quad \text{ since } \quad \frac{u_0^\delta(x) - u_0(x)}{1-u_0(x)} \le \frac{1 - u_0(x)}{1-u_0(x)} = 1.
	$$
As a consequence,  $\bu^\delta \in \mathcal A$ and $u_0^\delta = 1 - \sum\limits_{i=1}^n u_i^\delta$. 
% 	\textcolor{blue}{Alternatively: 
% 		$$
% 		\frac{u_0^\delta - u_0}{1-u_0} \left\{\begin{array}{cc}
% 		=0 & \text{ on } u_0 \ge \delta,\\
% 		\le \frac{u_0^\delta - u_0}{1-\delta} \le \frac{\delta}{1-\delta} \le 1, & \text{ on } u_0 \le \delta\end{array}\right.
% 		$$	
% 	}
We now prove that for $\delta$ sufficiently small, $F_ {\bar \bw}(u_1^\delta,u_2^\delta) < F_{\bar \bw}(u_1,u_2)$. 
Indeed, using the convexity of the function $[0,1] \ni x \mapsto x\ln(x)$, the fact that $|\nabla u_0^\delta| \le |\nabla u_0|$ a.e. in $\Omega$ and   
that $u_i^\delta = u_i$ on $\mathcal M_\delta^c = \{x \in \Omega\; : \; u_0(x) \ge \delta \}$ yields 
	\begin{align}\nonumber
	&F_{\bar \bw}(\bu^\delta) - F_{\bar \bw}(\bu) \\\nonumber
	&\leq \int_{\mathcal M_\delta} \sum_{i=1}^n [u_i^\delta \ln u_i^\delta - u_i \ln u_i] + [u_0^\delta \ln u_0^\delta -u_0\ln u_0] - \sum_{i=1}^nf_i(u_i^\delta-u_i) dx \\ \nonumber
	&\leq \int_{\mathcal M_\delta} \sum_{i=1}^n [\ln u_i^\delta + 1](u_i^\delta - u_i) + [\ln u_0^\delta + 1](u_0^\delta - u_0) - \sum_{i=1}^n f_i(u_i^\delta-u_i) dx \\ \nonumber %label{eq:Energy_difference}
	&=\int_{\mathcal M_\delta} \sum_{i=1}^n-[\ln u_i^\delta + 1](u_0^\delta - u_0)\frac{u_i}{1-u_0} + [\ln u_0^\delta + 1](u_0^\delta - u_0) dx \\ \label{eq:terms_linear}
	&\qquad - \int_{\mathcal M_\delta} \sum_{i=1}^n f_i(u_i^\delta-u_i) dx.
	\end{align}
	To estimate the first term we note that 
	\begin{align*}
	u_i^\delta  = u_i - (u_0^\delta - u_0)\frac{u_i}{1-u_0}  = \frac{u_i}{1-u_0}(1-u_0^\delta),
	\end{align*}
	for all $1\leq i \leq n$. Therefore,
	\begin{align*}
	&	\int_{\mathcal M_\delta}  \sum_{i=1}^n-[\ln u_i^\delta + 1](u_0^\delta - u_0)\frac{u_i}{1-u_0} dx \\
	&\leq \int_{\mathcal M_\delta}  \sum_{i=1}^n \left[\left|\ln\left(\frac{u_i}{1-u_0}\right)\left(\frac{u_i}{1-u_0}\right)\right| + \left|\ln\left(1-u_0^\delta\right)\left(\frac{u_i}{1-u_0}\right) \right| 
	+ \frac{u_i}{1-u_0}\right] (u_0^\delta - u_0) dx.\\
	\end{align*}
	Using the fact that $\mathop{\max}\limits_{x\in [0,1]} |x\ln x| = \frac{1}{e}$, the fact that $u_0^\delta \geq \delta$ and  that $\frac{u_i}{1-u_0} \le 1$, we obtain that, if $\delta \leq 1/2$, 
	\begin{align*}
	&	\int_{\mathcal M_\delta}  \sum_{i=1}^n-[\ln u_i^\delta + 1](u_0^\delta - u_0)\frac{u_i}{1-u_0} dx \\
	&\le n \left(\frac{1}{e} + |\ln(1-\delta)| + 1\right) \int_{\mathcal M_\delta}(u_0^\delta - u_0) dx \\
	&\le n \left(\frac{1}{e} + |\ln 2| + 1\right) \int_{\mathcal M_\delta}(u_0^\delta - u_0) dx.\\
	\end{align*}
	% The remaining terms in \eqref{eq:Energy_difference} is estimated analogously. 
	In addition, it holds that 
	\begin{align*}
	\int_{\mathcal M_\delta} [\ln u_0^\delta + 1](u_0^\delta - u_0) dx=  (\ln \delta  + 1)\int_{\mathcal M_\delta}(u_0^\delta - u_0) dx.
	\end{align*}
	Finally, the last terms in \eqref{eq:terms_linear} are estimated as follows:
	\begin{align*}
	-\int_{\mathcal M_\delta}\sum_{i=1}^nf_i(u_i^\delta-u_i) dx & \le \sum_{i=1}^n  \|f_i\|_{L^\infty(\Omega)} \int_{\mathcal M_\delta}(u_0^\delta - u_0) dx \\
	& \leq \left(\sum_{i=1}^n  \|\bar w_i\|_{L^\infty(\Omega)} + 3n \beta\right) \int_{\mathcal M_\delta}(u_0^\delta - u_0) dx,\\
	\end{align*}
	using $|u_i^\delta - u_i| \leq u_0^\delta - u_0$. Combining all these estimates gives
	\begin{equation*}
	F_{\bar \bw}(\bu^\delta) - F_{\bar \bw}(\bu) \le (\ln \delta  + C)\int_{\mathcal M_\delta}(u_0^\delta - u_0) dx,
	\end{equation*}
	with 
	$$
	C = \sum_{i=1}^n  \|\bar w_i\|_{L^\infty(\Omega)} + 3n \beta +n\left(\frac{1}{e} + \ln 2 +1\right).
	$$
Finally, we observe that $\int_{\mathcal M_\delta} (u_0^\delta - u_0) >0$, because the function $u_0^\delta -u_0$ is assumed to be positive on the set $\mathcal M_\delta$ which has positive measure.
	Thus, if the value of $\delta$ is chosen so that $\delta< \min\left(\frac{1}{2}, e^{-C}\right)$, we have that $\ln \delta  + C <0$ which implies
	$$
	F_{\bar \bw}(\bu^\delta) - F_{\bar \bw}(\bu) <0,
	$$
the desired contradiction. We have thus proved that, for every minimizer $\bu \in \mathcal A$ to (\ref{eq:var_problem}), there exists $\delta_{\bar \bw}>0$ such that 
	$u_0 \geq \delta_{\bar \bw}$, where $u_0=1- \sum\limits_{i=1}^n u_i$.
	
	Moreover,  the value of $\delta$ can be chosen so that it only depends on $n$, $\beta$, $\Omega$, $\tau$, and $N$ as soon as $\bar\bw$ is assumed to satisfy $\|\bar \bw\|_{(H^2(\Omega))^n} \leq N$, since $H^2(\Omega)$ is compactly embedded in $L^\infty(\Omega)$. \\
	
\bfseries Step~2: \normalfont Let us now show that there exists $1 > \delta >0$, which only depends on $n$, $\Omega$, $\beta$, $\tau$, and $\sum\limits_{i=1}^n \|\bar w_i\|_{L^\infty(\Omega)}$,
such that  $1 - u_0 = \sum\limits_{n=1}^n u_i \geq \delta$ almost everywhere in $\Omega$. As before, the precise value of $\delta$ will be specified later in the proof. As in Step~1, we argue by contradiction assuming that the set 
	$$
	\mathcal N_\delta := \{ x \in \Omega \; : \; 1-\delta < u_0(x) \}
	$$
	has positive measure. Let us now define
	\begin{align*}
	u_0^\delta := \min(u_0,1-\delta),
	\end{align*}
	and $u_i^\delta$ as in \eqref{eq:u1u2delta}. We still obtain that $\bu^\delta:=(u_1^\delta, \dots, u_n^\delta)\in \mathcal A$ and that $u_0^\delta = 1- \sum\limits_{i=1}^n u_i^\delta$. Doing similar calculations as in Step~1 gives
	\begin{equation}
	\label{estim-F}
	\begin{split}
	 & F_{\bar \bw}(\bu^\delta) - F_{\bar \bw}(\bu) \\
	 & \leq \int_{\mathcal N_\delta} \sum_{i=1}^n[\ln u_i^\delta + 1](u_i^\delta - u_i)+ [\ln u_0^\delta + 1](u_0^\delta - u_0)dx - \int_{\mathcal N_\delta} \sum_{i=1}^nf_i(u_i^\delta-u_i) dx.
	\end{split}
	\end{equation}
On the one hand, it holds that
	\begin{align*}
	&\int_{\mathcal N_\delta} \sum_{i=1}^n [\ln u_i^\delta + 1](u_i^\delta - u_i) dx= \int_{\mathcal N_\delta}\sum_{i=1}^n-[\ln u_i^\delta + 1](u_0^\delta - u_0)\frac{u_i}{1-u_0} dx \\
	&\le \int_{\mathcal N_\delta}\sum_{i=1}^n \left [\left|\ln\left(\frac{u_i}{1-u_0}\right)\left(\frac{u_i}{1-u_0}\right)\right| + 1 \right](u_0 - u_0^\delta) dx+
	\int_{\mathcal N_\delta}\frac{\sum\limits_{i=1}^n u_i}{1-u_0}\left(-\ln\left(1-u_0^\delta\right)(u_0^\delta - u_0)\right)dx\\
	&\le \left( n\left(\frac{1}{e} +  1\right) + \ln \delta \right) \int_{\mathcal N_\delta}(u_0 - u_0^\delta) dx,
	\end{align*}
	as $\sum\limits_{i=1}^n u_i = 1-u_0$ and $1-u_0^\delta =\delta $ in $\mathcal N_\delta$. Furthermore, if $\frac{1}{2} \geq \delta$, we can estimate the second term on the right-hand side of \eqref{estim-F} as
	\begin{align*}
	\int_{\mathcal N_\delta} [\ln u_0^\delta + 1](u_0^\delta - u_0) dx \le (\ln 2  + 1)\int_{\mathcal N_\delta}(u_0 - u_0^\delta) dx,
	\end{align*}
while the terms involving $f_i$ are estimated with similar calculations as in Step~1. Then we have 
	\begin{align*} %\label{eq:final_energy_ineq2}
	&F_{\bar \bw}(\bu^\delta) - F_{\bar \bw}(\bu) \le (\ln \delta  + C)\int_{\mathcal N_\delta}(u_0 - u_0^\delta) dx,
	\end{align*}
	with 
	$$
	C = \ln 2  +1 + \sum_{i=1}^n\|w_i\|_{L^\infty(\Omega)} + 3n\beta +n\left(\frac{1}{e} + 1\right).
	$$
	We then reach the desired contradiction as soon as the value of $\delta$ is chosen such that $\delta < \min(1/2, e^{-C})$. Moreover, as in Step~1, 
	we obtain that the value of $\delta$ can be chosen so that it only depends on $n$, $\beta$, $\Omega$, $\tau$, and $N$, as soon as $\bar\bw$ is assumed to satisfy $\|\bar \bw\|_{(H^2(\Omega))^n} \leq N$.
	
	\medskip
	
	\bfseries Step~3: \normalfont It remains to prove that for all $1\leq i \leq n$  there exists $1> \delta >0$ such that $u_i\geq \delta$ a.e. in $\Omega$. Without any loss of generality  it is sufficient to prove the claim for $i=1$. To this end, let us again reason by contradiction  and assume that the set 
	$$
	\mathcal O_\delta:= \left\{ x\in \Omega:  \; u_1(x) < \delta \right\}
	$$
	has  positive measure. Denoting by $\delta^0$ the positive lower bound on $1-u_0$ obtained from Step~2, assuming that $0 < \delta \le  \frac{\delta^0}{2}$, we define
	\begin{align*}
	u_1^\delta &= \max( u_1, \delta), \\
	u_j^\delta &:= u_j - (u_1^\delta - u_1)\frac{u_j}{1-u_0-u_1},\quad j=2,\ldots, n,\\
	u_0^\delta & := u_0.\\
	\end{align*}
	Denoting by $\bu^\delta:=(u_1^\delta, \dots, u_n^\delta)$, we again have $\bu^\delta \in \mathcal A$ and that $u_0^\delta = u_0 = 1 - \sum\limits_{i=1}^n u_i^\delta$. 
	Arguing as in Steps~1 and 2 gives again the existence of a constant $c>0$, which only depends on $\tau$, $n$, $\beta$, and $\Omega$ such that, if $\delta < \min\left( 1/2, \delta^0/2\right)$,  then
	\begin{equation*} %\label{eq:final_energy_ineq3}
	F_{\bar \bw}(\bu^\delta) - F_{\bar \bw}(\bu) \le (C + \ln \delta )\int_{\mathcal O_\delta}(u_1^\delta - u_1) dx,
	\end{equation*}
	with 
	$$
	C:= \sum_{i=1}^n \|\bar w_i\|_{L^\infty(\Omega)} + c.
	$$
      Thus, we obtain that $F_{\bar \bw}(\bu^\delta) - F_{\bar \bw}(\bu) <0$ if the value of $\delta$ is chosen such that $\delta < \min\left( \frac{1}{2}, \delta^0/2, e^{-C}\right)$ which yields the desired contradiction. 
      Moreover, if $\bar \bw$ is assumed to satisfy $\|\bar \bw\|_{(H^2(\Omega))^n} \leq N$, the value of $\delta$ can be chosen such that it only depends on $\tau$, $n$, $\Omega$, $\beta$, and $N$. Hence the desired result.
\end{proof}
We remark that the technique of constructing competitors to the scalar Cahn-Hilliard energy was also used in \cite{Gelantalis2017}, yet in a different context.

\begin{lemma}\label{lem:weak}
Every minimizer $\bu \in \mathcal A$  of \eqref{eq:var_problem} belongs to $\mathcal B$ and is a weak solution to \eqref{discr2} in the sense that
\begin{align}\label{eq:w_to_u_weaknew}
\sum_{i=1}^n \into (\ln u_i- \ln u_0)\phi_i + \eps \nabla u_0\cdot \nabla \phi_0 dx = \sum_{i=1}^n \into \left( \bar w_i + \beta(1- 2 u_0^p)\right)\phi_i dx,
\end{align}
for all $\bphi  \in \mathcal B \cap (L^\infty(\Omega))^n$. In particular,  
\begin{equation}\label{eq:distrib}
 \ln u_i - \ln u_0 + \eps \Delta u_0 = \bar w_i + \beta (1-2u_0^p) \quad \mbox{ in }\mathcal  D'(\Omega), 
\end{equation}
for all $ 1\leq i \leq n$.
\end{lemma}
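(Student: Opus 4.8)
The plan is to derive \eqref{eq:w_to_u_weaknew} as the Euler--Lagrange equation of the constrained minimization problem \eqref{eq:var_problem}, relying on two facts already at our disposal. First, any minimizer $\bu$ has finite energy, since $\inf_{\mathcal A} F_{\bar \bw} \le F_{\bar \bw}(\mathbf 0) = 0 < +\infty$; by the very definition of $F_{\bar \bw}$ this forces $\bu \in \mathcal B$, which settles the first assertion of the lemma and gives $u_0 \in H^1(\Omega)$. Second, Lemma~\ref{lem:pos_minimizers} provides $\delta > 0$ with $u_i \ge \delta$ for $1 \le i \le n$ and $\delta \le u_0 \le 1 - \delta$ almost everywhere. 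This uniform positivity, keeping the minimizer away from the boundary of the box constraints, is precisely what tames the otherwise singular logarithmic nonlinearity.

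First I would fix a test direction $\bphi = (\phi_i)_{1 \le i \le n} \in \mathcal B \cap (L^\infty(\Omega))^n$ and consider the perturbed family $\bu^t := \bu + t\bphi$ for $t \in \R$. Since $\phi_0 := -\sum_{i=1}^n \phi_i$, a short computation gives $1 - \sum_{i=1}^n u_i^t = u_0 + t\phi_0 \in H^1(\Omega)$, so the affine sum constraint is preserved and $\bu^t \in \mathcal B$ for every $t$. Using $u_i \ge \delta$, $u_0 \ge \delta$ together with the boundedness of the $\phi_i$ and of $\phi_0$, one finds $t_0 > 0$ (depending on $\delta$ and on $\bphi$) so that $u_i^t \ge \delta/2$ and $u_0^t \ge \delta/2$ for all $|t| \le t_0$, whence $\bu^t \in \mathcal A$. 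Because $\bu$ minimizes $F_{\bar \bw}$ over $\mathcal A$, the scalar function $g(t) := F_{\bar \bw}(\bu^t)$ attains a minimum at $t = 0$ on the \emph{two-sided} interval $[-t_0, t_0]$, so it will suffice to prove that $g$ is differentiable at $0$ and to identify $g'(0) = 0$.

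The main obstacle, and the only nontrivial analytic point, is differentiating the logarithmic part $\into \sum_{i=0}^n u_i^t \ln u_i^t \, dx$ under the integral sign. Here the lower bound $u_i^t \ge \delta/2 > 0$ is indispensable: on a range $[\delta/2, C]$ the map $s \mapsto s \ln s$ is $C^1$ with derivative $\ln s + 1$ uniformly bounded, so the integrands admit a constant (hence integrable, as $\Omega$ is bounded) bound on their $t$-derivatives and the dominated convergence theorem applies. The quadratic term $\frac{\eps}{2}\into |\nabla u_0 + t \nabla \phi_0|^2 \, dx$ and the linear term are differentiated directly. Collecting the derivatives at $t = 0$ yields
\begin{equation*}
g'(0) = \into \sum_{i=0}^n \phi_i(\ln u_i + 1) + \eps \nabla u_0 \cdot \nabla \phi_0 - \sum_{i=1}^n \phi_i f_i \, dx = 0,
\end{equation*}
with $\phi_0 = -\sum_{i=1}^n \phi_i$ understood in the $i=0$ summand and $f_i = \bar w_i + \beta(1 - 2u_0^p)$. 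Since $\sum_{i=0}^n \phi_i = 0$, the constant contributions cancel and $\sum_{i=0}^n \phi_i \ln u_i = \sum_{i=1}^n \phi_i(\ln u_i - \ln u_0)$, which is exactly \eqref{eq:w_to_u_weaknew}.

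Finally, for the distributional statement \eqref{eq:distrib} at a fixed $1 \le i \le n$, I would specialize the test field to $\phi_i \in C_c^\infty(\Omega)$ with $\phi_j = 0$ for $j \ne i$; then $\phi_0 = -\phi_i \in H^1(\Omega)$, so $\bphi \in \mathcal B \cap (L^\infty(\Omega))^n$ is admissible. For this choice $\into \eps \nabla u_0 \cdot \nabla \phi_0 \, dx = -\eps \into \nabla u_0 \cdot \nabla \phi_i \, dx = \eps \langle \Delta u_0, \phi_i \rangle$ by the definition of the distributional Laplacian, and \eqref{eq:w_to_u_weaknew} collapses to $\langle \ln u_i - \ln u_0 + \eps \Delta u_0, \phi_i \rangle = \langle \bar w_i + \beta(1 - 2u_0^p), \phi_i \rangle$ for all such $\phi_i$, i.e.\ precisely \eqref{eq:distrib}.
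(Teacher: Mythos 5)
Your proposal is correct and follows essentially the same route as the paper: compute the first variation of $F_{\bar\bw}$ at the minimizer in an admissible direction $\bphi\in\mathcal B\cap(L^\infty(\Omega))^n$, use the uniform positivity from Lemma~\ref{lem:pos_minimizers} together with dominated convergence to differentiate the logarithmic terms, exploit $\sum_{i=0}^n\phi_i=0$ to reduce to \eqref{eq:w_to_u_weaknew}, and then localize with $\phi_i\in\mathcal D(\Omega)$, $\phi_j=0$ for $j\neq i$, to get \eqref{eq:distrib}. The only cosmetic difference is that you work with a two-sided interval $[-t_0,t_0]$ and set $g'(0)=0$ directly, whereas the paper takes the one-sided limit $t\to 0^+$ and then replaces $\bphi$ by $-\bphi$; your sign bookkeeping for the $f_i$ term is in fact the careful one.
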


\begin{proof} 
Fix $\bphi  \in \mathcal B \cap (L^\infty(\Omega))^n $. Due to Lemma~\ref{lem:pos_minimizers}  we know that for every  $t>0$ sufficiently small  $\bu + t\bphi \in \mathcal A \cap \mathcal B$. Moreover, since $\bu$ is a minimizer of \eqref{eq:var_problem} it holds that 
	\begin{equation}
	\label{dis.3.6-1}
	\begin{split}
	0 &\le \lim_{t\to 0} \frac{F_{\bar \bw}(\bu + t\phi) - F_{\bar \bw}(\bu)}{t} \\
	&= \lim_{t\to 0}\into \sum_{i=0}^n \frac{(u_i+t\phi_i)\ln (u_i+t\phi_i) - u_i\ln u_i}{t} + \frac{\eps}{2}\frac{|\nabla(u_0+t\phi_0)|^2-|\nabla u_0|^2}{t} + \sum_{i=1}^n f_i \phi_i dx. 
	\end{split}
	\end{equation}
	 Lemma~\ref{lem:pos_minimizers} and the Lebesgue dominated convergence theorem then give
	\begin{equation}
	\label{dis.3.6-2}
	\begin{split}
	& \lim_{t\to 0}\into \sum_{i=0}^n \frac{(u_i+t\phi_i)\ln (u_i+t\phi_i) - u_i\ln u_i}{t} + \frac{\eps}{2}\frac{|\nabla(u_0+t\phi_0)|^2-|\nabla u_0|^2}{t} + \sum_{i=1}^n f_i \phi_i dx\\
	&=\into \sum_{i=1}^n (\ln u_i - \ln u_0)\phi_i + \eps \nabla u_0 \cdot \nabla \phi_0 + \sum_{i=1}^n f_i \phi_i dx.
	\end{split}
	\end{equation}
Combining \eqref{dis.3.6-1} and \eqref{dis.3.6-2} yields
	$$
	\into \sum_{i=1}^n (\ln u_i - \ln u_0)\phi_i + \eps \nabla u_0 \cdot \nabla \phi_0 + \sum_{i=1}^n f_i \phi_i dx \geq 0.
	$$
	Replacing $\bphi$ by $-\bphi$  we obtain that $\bu$ satisfies \eqref{eq:w_to_u_weaknew}.
	Finally, for all $1\leq i \leq n$, we obtain \eqref{eq:distrib} by considering a test function $\bphi = (\phi_j)_{1\leq j \leq n}$ such that $\phi_i \in \mathcal D(\Omega)$ and $\phi_j = 0$ for all $1\leq j\neq i \leq n$. 
\end{proof}

%, and then a critical point,  of $\tilde E$, which in turn implies that $\tilde {\bf u}$ is a solution to \eqref{discr2}. 

\begin{lemma} System~\eqref{eq:w_to_u_weaknew} has at most one solution $\bu \in \mathcal A \cap \mathcal B$.
\end{lemma}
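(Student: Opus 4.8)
The plan is to run a monotonicity argument based on the strict monotonicity of the logarithm together with the convexity of the Dirichlet term. Suppose $\bu^1 = (u_i^1)_{1\le i\le n}$ and $\bu^2 = (u_i^2)_{1\le i\le n}$ are two solutions in $\mathcal A \cap \mathcal B$ of \eqref{eq:w_to_u_weaknew}, and set $u_0^k := 1 - \sum_{i=1}^n u_i^k$ for $k=1,2$. First I would check that the difference $\bphi := \bu^1 - \bu^2$ is an admissible test function: each component $u_i^1 - u_i^2$ lies in $L^\infty(\Omega)$, while $\phi_0 = -\sum_{i=1}^n \phi_i = u_0^1 - u_0^2 \in H^1(\Omega)$ because both $u_0^1, u_0^2 \in H^1(\Omega)$; hence $\bphi \in \mathcal B \cap (L^\infty(\Omega))^n$.

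Since the right-hand side of \eqref{eq:w_to_u_weaknew} depends only on the fixed data $\bar \bw$ and $u_0^p$, subtracting the two weak formulations cancels it entirely. Inserting $\bphi = \bu^1 - \bu^2$ then yields
$$
\sum_{i=1}^n \into [(\ln u_i^1 - \ln u_0^1) - (\ln u_i^2 - \ln u_0^2)](u_i^1 - u_i^2)\,dx + \eps \into (\nabla u_0^1 - \nabla u_0^2)\cdot \nabla(u_0^1 - u_0^2)\,dx = 0.
$$
The Dirichlet contribution equals $\eps \into |\nabla(u_0^1 - u_0^2)|^2\,dx \ge 0$. For the logarithmic part I would use the identity $\sum_{i=1}^n (u_i^1 - u_i^2) = -(u_0^1 - u_0^2)$ to absorb the $\ln u_0$ terms back into the sum, rewriting the first integral as $\sum_{i=0}^n \into (\ln u_i^1 - \ln u_i^2)(u_i^1 - u_i^2)\,dx$, in which every summand is nonnegative because $\ln$ is nondecreasing.

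Consequently the left-hand side is a sum of nonnegative terms that vanishes, so each term must be zero. Strict monotonicity of $\ln$ then forces $u_i^1 = u_i^2$ a.e. for every $0 \le i \le n$, which is exactly the asserted uniqueness (the nonnegative gradient term may simply be discarded, though it also gives $\nabla(u_0^1-u_0^2)=0$). The one point requiring genuine care is the integrability of the logarithmic pairings: for the integrals $\into (\ln u_i^1 - \ln u_i^2)(u_i^1 - u_i^2)\,dx$ to be meaningful one needs the solutions to be essentially bounded away from $0$. I would secure this by noting that, since $F_{\bar \bw}$ is convex, any solution of \eqref{eq:w_to_u_weaknew} is a critical point and hence a minimizer of \eqref{eq:var_problem}, so the uniform lower bounds of Lemma~\ref{lem:pos_minimizers} apply and the pairings are well defined, closing the argument.
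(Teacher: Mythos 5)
Your argument is exactly the paper's proof: subtract the two weak formulations, test with $\bphi = \bu^1 - \bu^2$ (so that $\phi_0 = u_0^1 - u_0^2$), regroup the logarithmic terms via $\sum_{i=1}^n \phi_i = -\phi_0$ into $\sum_{i=0}^n \into (\ln u_i^1 - \ln u_i^2)(u_i^1 - u_i^2)\,dx$ plus the nonnegative Dirichlet term, and conclude from the strict monotonicity of the logarithm. The additional care you take about the well-definedness of the logarithmic pairings (invoking the lower bounds of Lemma~\ref{lem:pos_minimizers}) goes slightly beyond the paper, which performs the manipulation formally, but it is consistent with it and does no harm.
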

\begin{proof}
Let us suppose that there exist two weak solutions $\bu$ and $\tilde\bu $ in $\mathcal A \cap \mathcal B$ to \eqref{eq:w_to_u_weaknew}. Subtracting the respective equations
% and denoting by  $u_0:= 1- \sum\limits_{i=1}^n u_i$ and $\tilde u_0:= 1- \sum\limits_{i=1}^n \tilde u_i$ 
yields
%	\begin{equation}
% 	\label{eq:w_to_u_difference}
% 	\ln \tilde u_i- \ln u_i- (\ln \tilde u_0- \ln u_0)+ \eps \Delta(\tilde u_0- u_0)= 0
% 	\end{equation}
% 	for $i,\ldots, n$. Testing the weak form of the $i$-th equation with $\tilde u_i - u_i$ and summing over all equations we obtain
	\begin{equation*}
 	0 = \into \sum_{i=1}^n (\ln u_i - \ln \tilde u_i  - (\ln u_0  - \ln \tilde u_0 ))\phi_i - (\ln u_0  - \ln \tilde u_0 )\phi_0 + \eps \nabla (u_0 - \tilde u_0)\cdot \nabla \phi_0 dx,
	\end{equation*}
for all $\bphi  \in \mathcal B \cap (L^\infty(\Omega))^n $. Now, choosing $\phi_i = u_i - \tilde u_i$ for all $1\leq i \leq n$ so that 
$\phi_0 = - \sum\limits_{i=1}^n (u_i - \tilde u_i) = u_0 - \tilde u_0 $ we obtain 
	\begin{equation*}
	%\label{dis-log}
	\begin{split}
	0 &= \into \sum_{i=1}^n (\ln u_i  - \ln \tilde u_i  - (\ln u_0  - \ln \tilde u_0 ))(u_i - \tilde u_i) + \eps \nabla (u_0 - \tilde u_0)\cdot \nabla ( u_0 -\tilde u_0) dx \\
	 &= \into \sum_{i=1}^n (\ln u_i  - \ln \tilde u_i )(u_i - \tilde u_i) + (\ln u_0  - \ln \tilde u_0 )(u_0 - \tilde u_0) + \eps |\nabla (u_0 - \tilde u_0)|^2 dx.\\
	\end{split}
	\end{equation*}
	The monotonicity of the logarithm implies $(\ln x -\ln y )(x-y) \ge 0$ for all  $x,\,y > 0$, which implies that all terms in the above integral are non-negative, so that a.e. in $\Omega$
%	$$
%	(\ln u_i  - \ln \tilde u_i )(u_i - \tilde u_i) \geq 0 \quad \forall \,  0\leq i \leq n. 
%	$$
%	Then, from \eqref{dis-log} we obtain as a consequence that almost everywhere in $\Omega$
	$$
	 (\ln u_i  - \ln \tilde u_i )(u_i - \tilde u_i) = 0  \quad \forall \, 0\leq i \leq n
	$$
	and 
	$$
	\nabla (u_0 - \tilde u_0) = 0.
	$$
	The strict monotonicity of the logarithm thus implies that $u_i = \tilde u_i$ for $0\leq i \leq n$, which yields the desired result. 
\end{proof}
We then let define $\mathcal S_2: (H^2(\Omega))^n \to \mathcal A$ as the application which to any $\bar \bw\in (H^2(\Omega))^n$ associates  the unique minimizer $\bu$ of \eqref{eq:var_problem}, 
which is also the unique solution in $\mathcal A \cap \mathcal B$ to \eqref{eq:w_to_u_weaknew}. Our next aim is to prove that $\mathcal S_2$ is a continuous map. To this end, we are going to prove that, if $\left( \bar \bw^{(m)}\right)_{m\in \mathbb{N}}$ is a sequence in $(H^2(\Omega))^n$ which strongly converges in $(H^2(\Omega))^n$ to some 
$\bar \bw \in (H^2(\Omega))^n$, then the sequence of minimizers to the functionals $\left( F_{\bar \bw^ {(m)}} \right)_{m\in \mathbb{N}}$ converges to the minimizer of $F_{\bar \bw}$. We first collect some regularity properties of the minimizers.
%
%The next result allows us to obtain further regularity properties on $\mathcal S_2(\bar \bw)$.
%
\begin{lemma}
	\label{lemma2}
	For all $\bar \bw \in (H^2(\Omega))^n$, it holds that $\mathcal S_2(\bar \bw) \in (H^2(\Omega))^n$. 
	Moreover, for all $N>0$, there exists a constant $M_1>0$, which only depends on $n$, $\Omega$, $\eps$, $\beta$, and $N$, such that for all $\bar \bw \in (H^2(\Omega))^n$ with $\|\bar \bw\|_{(H^2(\Omega))^n} \leq N$, we have
	$$
	\|\mathcal S_2(\bar \bw) \|_{(H^2(\Omega))^n} \leq M_1.
	$$	
\end{lemma}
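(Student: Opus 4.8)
The plan is to upgrade the regularity in two stages: first read the Euler--Lagrange relation \eqref{eq:distrib} as an elliptic Neumann problem to obtain $u_0\in H^2(\Omega)$, and then recover the remaining components $u_1,\dots,u_n$ through the explicit algebraic inversion made possible by the mass constraint, which is precisely the boundedness-by-entropy formula in our setting.

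Fix $\bar\bw\in(H^2(\Omega))^n$ and let $\bu=\mathcal S_2(\bar\bw)$ be the associated minimizer. By Lemma~\ref{lem:pos_minimizers} there is $\delta\in(0,1)$ with $\delta\le u_i$ and $\delta\le u_0\le 1-\delta$ a.e.\ in $\Omega$, so $\ln u_i,\ln u_0\in L^\infty(\Omega)$ with norms controlled by $|\ln\delta|$; moreover $\delta$ is uniform once $\|\bar\bw\|_{(H^2(\Omega))^n}\le N$. Since $\bar w_i\in H^2(\Omega)\embedded L^\infty(\Omega)$ and $0\le u_0^p\le 1$, the datum $f_i:=\bar w_i+\beta(1-2u_0^p)$ also lies in $L^\infty(\Omega)$. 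Arguing exactly as in Remark~\ref{rem:CL}, the weak formulation \eqref{eq:w_to_u_weaknew} together with the density of $L^\infty(\Omega)\cap H^1(\Omega)$ in $H^1(\Omega)$ shows that $u_0$ is the weak solution of the homogeneous Neumann problem
\begin{equation*}
-\eps\Delta u_0=\ln u_i-\ln u_0-f_i\ \text{ in }\Omega,\qquad \nabla u_0\cdot\mathbf n=0\ \text{ on }\partial\Omega,
\end{equation*}
whose right-hand side is in $L^\infty(\Omega)\subset L^2(\Omega)$ with $L^2$-norm bounded in terms of $\delta$, $\beta$, $\|\bar w_i\|_{L^\infty(\Omega)}$ and $|\Omega|$. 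Standard $H^2$-regularity for the Neumann Laplacian on the regular domain $\Omega$ then yields $u_0\in H^2(\Omega)$ and $\|u_0\|_{H^2(\Omega)}\le C(\|\ln u_i-\ln u_0-f_i\|_{L^2(\Omega)}+\|u_0\|_{L^2(\Omega)})$; as $\|u_0\|_{L^2(\Omega)}\le|\Omega|^{1/2}$, this bound depends only on $n,\Omega,\eps,\beta$ and (through the $L^\infty$-bound on $\bar w_i$ from the compact embedding) on $N$.

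It remains to pass from $u_0\in H^2(\Omega)$ to $u_i\in H^2(\Omega)$. One cannot simply exponentiate \eqref{eq:distrib} since $\Delta u_0$ is only in $L^2(\Omega)$; the key is therefore to eliminate this term. Writing \eqref{eq:distrib} as $\ln u_i=a+\bar w_i$ with $a:=\ln u_0-\eps\Delta u_0+\beta(1-2u_0^p)$ \emph{independent of $i$}, exponentiation gives $u_i=e^a e^{\bar w_i}$ a.e., and summing over $i$ with $\sum_{i=1}^n u_i=1-u_0$ to solve for $e^a$ produces the explicit representation
\begin{equation*}
u_i=(1-u_0)\,\frac{e^{\bar w_i}}{\sum_{j=1}^n e^{\bar w_j}},\qquad 1\le i\le n,
\end{equation*}
in which $\Delta u_0$ no longer appears. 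Since $d\le3$, the space $H^2(\Omega)=H^2(\Omega)\cap L^\infty(\Omega)$ is a Banach algebra and is stable under composition with smooth functions (the quadratic gradient terms are controlled using $\nabla\bar w_j\in H^1(\Omega)\embedded L^6(\Omega)$). As $1-u_0\in H^2(\Omega)$ and $\bar\bw\mapsto e^{\bar w_i}/\sum_j e^{\bar w_j}$ is smooth with derivatives bounded in terms of $\|\bar\bw\|_{(L^\infty(\Omega))^n}$, both factors and hence their product lie in $H^2(\Omega)$. Tracking the constants in the resulting algebra and composition estimates---which depend only on $\Omega$, on the already-established bound for $\|u_0\|_{H^2(\Omega)}$, and on $\|\bar\bw\|_{(H^2(\Omega))^n}\le N$---finally gives $\|\mathcal S_2(\bar\bw)\|_{(H^2(\Omega))^n}\le M_1$ with $M_1=M_1(n,\Omega,\eps,\beta,N)$.

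The main obstacle is this last step. The direct approach via \eqref{eq:distrib} is blocked by the $\Delta u_0$ term, so the decisive point is to exploit the constraint $\sum_i u_i=1-u_0$ to invert the logarithmic relations explicitly and remove it; once the representation above is in hand, the conclusion rests on the algebra and composition properties of $H^2(\Omega)$, which is exactly where the standing hypothesis $d\le3$ enters.
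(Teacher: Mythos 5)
Your proposal is correct and follows the same two-stage strategy as the paper: first extract $\Delta u_0\in L^2(\Omega)$ from the Euler--Lagrange relation \eqref{eq:distrib} using the uniform positivity of Lemma~\ref{lem:pos_minimizers} (the paper reads off $\eps\|\Delta u_0\|_{L^2}$ directly from the equation, having already bounded $\|\nabla u_0\|_{L^2}$ via $F_{\bar\bw}(\bu)\le F_{\bar\bw}(\mathbf 0)=0$; your invocation of Neumann elliptic regularity is a slightly more careful way to conclude genuine $H^2$-membership), and then transfer regularity to $u_1,\dots,u_n$ via the algebraic consequence $u_i=u_je^{\bar w_i-\bar w_j}$ of \eqref{eq:distrib} combined with the constraint $\sum_j u_j=1-u_0$. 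Where you diverge is in the packaging of this second stage: the paper differentiates the implicit relation $1-u_0=u_i\bigl(1+\sum_{j\ne i}e^{\bar w_j-\bar w_i}\bigr)$ once to solve for $\nabla u_i$ (obtaining $L^2$ and $L^4$ bounds) and a second time to solve for $\Delta u_i$, estimating each term by hand with $\nabla\bar w_j\in H^1\embedded L^4$; you instead write the closed-form $u_i=(1-u_0)\,e^{\bar w_i}/\sum_j e^{\bar w_j}$ and invoke the Banach-algebra and composition stability of $H^2(\Omega)$ for $d\le 3$, which rests on exactly the same embeddings. Your version has two small advantages: the explicit softmax representation makes it transparent that $\Delta u_0$ has been eliminated before any differentiation of the $u_i$ takes place (in the paper this cancellation happens implicitly), and it sidesteps the term-by-term bookkeeping. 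Both arguments yield the same dependence of $M_1$ on $n$, $\Omega$, $\eps$, $\beta$, and $N$ (through the compact embedding $H^2(\Omega)\embedded L^\infty(\Omega)$ and the $N$-uniform lower bound $\delta$). No gaps.
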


\begin{proof}
	Let $\bar \bw \in (H^2(\Omega))^n$,  $\bu:= \mathcal S_2(\bar \bw)$. We first point out that, since $\bu$ is a minimizer of $F_{\bar \bw}$ on $\mathcal A$ and therefore $F_{\bar \bw}(\bu) \le F_{\bar \bw}(\mathbf 0) = 0$, it holds  
	\begin{align*}
	\frac{\eps}{2} \|\nabla u_0\|_{L^2(\Omega)}^2 & \leq n |\Omega| \sup_{x\in [0,1]} |x\ln x| + 3\beta |\Omega|\left( \sum_{i=1}^n \|\bar w_i\|_{L^\infty(\Omega)}\right),\\
	& \leq \frac{n |\Omega|}{e}+ 3\beta |\Omega|\left( \sum_{i=1}^n \|\bar w_i\|_{H^2(\Omega)}\right),\\
	& \leq \frac{n |\Omega|}{e} + 3\beta \sqrt{n}|\Omega| \|\bar \bw\|_{(H^2(\Omega))^n}.\\
	\end{align*}
Moreover, from Lemma~\ref{lem:weak} we have 
	\begin{equation}\label{eq:relation}
	\ln u_i - \ln u_0 + \eps \Delta u_0 = \bar w_i + \beta (1 - 2u_0^p)
	\end{equation}
for all $1\leq i \leq n$, in the sense of distributions. How Lemma~\ref{lem:pos_minimizers} implies $\|\ln u_i\|_{L^\infty(\Omega)} \leq |\ln \delta_{\bar w}|$ and $\|\ln u_0\|_{L^\infty(\Omega)} \leq |\ln \delta_{\bar w}|$. This yields  $\Delta u_0 \in L^2(\Omega)$ and  
	\[
	\begin{split}
	\eps \|\Delta u_0 \|_{L^2(\Omega)} & \leq |\Omega|^{1/2} \left( 2 |\ln \delta_{\bar \bw}|  + \|\bar w_i\|_{L^\infty(\Omega)} + 3 \beta\right)  \\
	& \leq |\Omega|^{1/2} \left( 2 |\ln \delta_{\bar \bw}|  + 
	C_e\|\bar \bw\|_{(H^2(\Omega))^n} + 3 \beta\right),
	\end{split}
	\]
	where $C_e$ is the embedding constant for $H^2(\Omega) \hookrightarrow L^\infty(\Omega)$. 	Moreover, if $\bar\bw$ satisfies $\|\bar \bw\|_{(H^2(\Omega))^n} \leq N$, there exists $\delta >0$, whose value only depends on $n$, $\Omega$, $\beta$, and $N$, such that 
	$\|\ln u_i\|_{L^\infty(\Omega)} \leq |\ln \delta|$ and $\|\ln u_0\|_{L^\infty(\Omega)} \leq |\ln \delta|$. Hence, in this case, 
	$$
	\eps \|\Delta u_0 \|_{L^2(\Omega)} \leq |\Omega|^{1/2} \left( 2 |\ln \delta| + N + 3 \beta\right).
	$$
	Let us now prove that  $\nabla u_i \in L^2(\Omega)$ for all $1\leq i \leq n$. Taking into account \eqref{eq:relation}  we obtain that 
	\[
	\bar w_i- \bar w_j= \ln \frac{u_i}{u_j} \quad \forall \, 1\leq i,j \leq n, 
	\]
	which implies that $u_i= u_j e^{\bar w_i- \bar w_j}$. Then, for all $1\leq i \leq n$, it holds that
	\[
	\begin{split}
	-\nabla u_0 = \nabla \left(\sum_{j=1}^n u_j\right) &= \nabla \l(u_i \left(1+ \sum_{1\leq j\neq i \leq n}e^{\bar w_i- \bar w_j}\right) \r)\\
	&=  \left(1+ \sum_{1\leq j\neq i \leq n}e^{\bar w_i- \bar w_j}\right) \nabla u_i+ u_i \sum_{1\leq j\neq i \leq n}e^{\bar w_i- \bar w_j}\nabla(\bar w_i- \bar w_j),
	\end{split}
	\]
	so that 
	$$
	\nabla u_i= \frac{-\nabla u_0+u_i \sum\limits_{1 \leq  j\neq i \leq n}e^{\bar w_i- \bar w_j}\nabla(\bar w_i- \bar w_j)}{1+ \sum\limits_{1\leq  j\neq i \leq n}e^{\bar w_i- \bar w_j} } = \frac{-\nabla u_0+\sum\limits_{1 \leq  j\neq i \leq n}u_j\nabla(\bar w_i- \bar w_j)}{1+ \sum\limits_{1\leq  j\neq i \leq n}e^{\bar w_i- \bar w_j}}.
	$$
	Thus, taking into account that $0 \le u_j \le 1$ for all $1\leq j \leq n$, we obtain    $\nabla u_i \in L^2(\Omega)$ for all $1\leq i \leq n$ and 
	\[
	\|\nabla u_i\|_{L^2(\Omega)} \le  \|\nabla u_0\|_{L^2(\Omega)}+ (n-1) \|\nabla \bar w_i\|_{L^2(\Omega)} + \sum_{1\leq j\neq i \leq n} \|\nabla \bar w_j\|_{L^2(\Omega)}.
%	\\
%	& \le 2 \into |\nabla(u_1+ u_2)|^2+ 2\into  |\nabla (\bar w_1- \bar w_2)|^2 dx \le C.
	\]
	Moreover, using the fact that $d\leq 3$ yields the compact embedding $H^1(\Omega) \hookrightarrow L^4(\Omega)$, there exists a constant $C>0$ which only depends on $\Omega$ and $n$ such that 
	\begin{align*}
	 \|\nabla u_i\|_{L^4(\Omega)} & \le  \|\nabla u_0\|_{L^4(\Omega)}+ (n-1) \|\nabla \bar w_i\|_{L^4(\Omega)} + \sum_{1\leq j\neq i \leq n} \|\nabla \bar w_j\|_{L^4(\Omega)},\\
	 & \leq C \left(\|u_0\|_{H^2(\Omega)}+ \|\bar \bw\|_{H^2(\Omega)^n}\right).\\
	\end{align*}
	Finally, for all $1\leq i \leq n$ we have
	\begin{align*}
	 - \Delta u_0 & =  \divergenz \left( \left(1+ \sum_{1\leq j\neq i \leq n}e^{\bar w_i- \bar w_j}\right) \nabla u_i+ u_i \sum_{1\leq j\neq i \leq n}e^{\bar w_i- \bar w_j}\nabla(\bar w_i- \bar w_j)\right),\\
	 & = 2 \left( \sum_{1\leq j\neq i \leq n}e^{\bar w_i- \bar w_j}\nabla(\bar w_i- \bar w_j)\right) \cdot \nabla u_i + \left( 1+ \sum_{1\leq j\neq i \leq n}e^{\bar w_i- \bar w_j} \right) \Delta u_i\\
	 & \qquad+ u_i \divergenz \left( \sum_{1\leq j\neq i \leq n}e^{\bar w_i- \bar w_j}\nabla(\bar w_i- \bar w_j) \right)\\
	  & = 2 \sum_{1\leq j\neq i \leq n}e^{\bar w_i- \bar w_j}\nabla(\bar w_i- \bar w_j)\cdot \nabla u_i + \left( 1+ \sum_{1\leq j\neq i \leq n}e^{\bar w_i- \bar w_j} \right) \Delta u_i\\
	 & \qquad+ u_i \sum_{1\leq j\neq i \leq n}e^{\bar w_i- \bar w_j}|\nabla(\bar w_i- \bar w_j)|^2 + e^{\bar w_i- \bar w_j}\Delta(\bar w_i- \bar w_j).\\
	\end{align*}
	Hence, 
	\begin{align*}
	 \Delta u_i & = \frac{1}{\left( 1+ \sum\limits_{1\leq j\neq i \leq n}e^{\bar w_i- \bar w_j} \right) } \left[ - \Delta u_0 -2 \sum_{1\leq j\neq i \leq n}e^{\bar w_i- \bar w_j}\nabla(\bar w_i- \bar w_j)\cdot \nabla u_i\right]\\
	 & \qquad + \frac{1}{\left( 1+ \sum\limits_{1\leq j\neq i \leq n}e^{\bar w_i- \bar w_j} \right) }  \left[  -  \sum_{1\leq j\neq i \leq n}u_j|\nabla(\bar w_i- \bar w_j)|^2 - u_j\Delta(\bar w_i- \bar w_j)\right],\\
	\end{align*}
	which implies that $\Delta u_i\in L^2(\Omega)$ and 
	\begin{align*}
	 \|\Delta u_i\|_{L^2(\Omega)} & \leq  \|\Delta u_0\|_{L^2(\Omega)} + 2 \|\nabla u_i\|_{L^4(\Omega)} \|\nabla (\bar w_i- \bar w_j)\|_{L^4(\Omega)}\\
 	 & \qquad+ \sum_{1\leq j\neq i \leq n}\left( \|\nabla (\bar w_i- \bar w_j)\|_{L^4(\Omega)} \right)^2 + \|\Delta (\bar w_i -\bar w_j) \|_{L^2(\Omega)}.
	\end{align*}
	Thus, there exists a constant $C'>0$, which only depends on $\Omega$ and $n$, such that 
	$$
	\|\Delta u_i\|_{L^2(\Omega)} \leq  C'\left( \|u_0\|_{H^2(\Omega)} + \|u_0\|_{H^2(\Omega)}^2 + \|\bar \bw\|_{(H^2(\Omega))^n} + \|\bar \bw\|_{(H^2(\Omega))^n}^2\right).
	$$
	Collecting all these estimates gives the desired result. 
\end{proof}

\begin{lemma}\label{contS2}
The map $\mathcal S_2: (H^2(\Omega))^n \to \mathcal A \subset (L^\infty(\Omega))^n$ is continuous. 
\end{lemma}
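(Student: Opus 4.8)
The plan is to prove sequential continuity, which is enough since both $(H^2(\Omega))^n$ and $(L^\infty(\Omega))^n$ are metric spaces. I would take a sequence $(\bar\bw^{(m)})_{m\in\N}$ converging strongly in $(H^2(\Omega))^n$ to some $\bar\bw$, set $\bu^{(m)} := \mathcal S_2(\bar\bw^{(m)})$ and $\bu := \mathcal S_2(\bar\bw)$, and aim to show that $\bu^{(m)} \to \bu$ in $(L^\infty(\Omega))^n$.

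First I would note that a convergent sequence is bounded, so $\|\bar\bw^{(m)}\|_{(H^2(\Omega))^n} \le N$ for some $N$ and all $m$. Then Lemma~\ref{lemma2} gives a uniform bound on $(\bu^{(m)})_m$ in $(H^2(\Omega))^n$, and Lemma~\ref{lem:pos_minimizers} supplies a single $\delta>0$, depending only on $n$, $\Omega$, $\tau$, $\beta$, and $N$, with $u_i^{(m)} \ge \delta$ for $1\le i \le n$ and $\delta \le u_0^{(m)} \le 1-\delta$ a.e., uniformly in $m$. Invoking the compact embedding $H^2(\Omega) \hookrightarrow L^\infty(\Omega)$, I extract a subsequence (not relabelled) such that $\bu^{(m)} \to \bu^*$ strongly in $(L^\infty(\Omega))^n$ and $\nabla u_0^{(m)} \weak \nabla u_0^*$ in $(L^2(\Omega))^d$, for some limit $\bu^* \in \mathcal A \cap \mathcal B$ inheriting the bounds $u_i^* \ge \delta$ and $u_0^* \ge \delta$ (the limit stays in $\mathcal A$ since $\mathcal A$ is closed, and in $\mathcal B$ since $u_0^*\in H^2(\Omega)\subset H^1(\Omega)$).

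Next I would identify $\bu^*$ with $\mathcal S_2(\bar\bw)$ by passing to the limit in the weak formulation \eqref{eq:w_to_u_weaknew} that each $\bu^{(m)}$ satisfies with datum $\bar\bw^{(m)}$. Fixing $\bphi \in \mathcal B \cap (L^\infty(\Omega))^n$, the delicate terms are the logarithmic ones; since $u_i^{(m)}, u_0^{(m)} \in [\delta, 1]$ with $\delta$ independent of $m$ and $\ln$ is Lipschitz on $[\delta, 1]$, the strong $L^\infty$ convergence upgrades to $\ln u_i^{(m)} \to \ln u_i^*$ and $\ln u_0^{(m)} \to \ln u_0^*$ in $L^\infty(\Omega)$. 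The gradient term passes to the limit by the weak $L^2$ convergence of $\nabla u_0^{(m)}$, and the right-hand side by $\bar w_i^{(m)} \to \bar w_i$ in $H^2(\Omega) \hookrightarrow L^2(\Omega)$. Thus $\bu^*$ solves \eqref{eq:w_to_u_weaknew} with datum $\bar\bw$, and the uniqueness lemma proved above forces $\bu^* = \bu$.

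Finally, a standard subsequence argument closes the proof: since every subsequence of $(\bu^{(m)})_m$ admits a further subsequence converging in $(L^\infty(\Omega))^n$ to the same limit $\bu$, the entire sequence converges to $\bu$. I expect the main obstacle to be the treatment of the logarithmic nonlinearities in the limit, which is precisely what the uniform lower bound $\delta$ from Lemma~\ref{lem:pos_minimizers} is designed to overcome; the remaining passages are routine compactness and uniqueness arguments.
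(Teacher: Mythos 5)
Your proof is correct, but it identifies the limit by a genuinely different mechanism than the paper. Both arguments share the same skeleton: uniform $H^2$ bounds from Lemma~\ref{lemma2}, extraction of a subsequence converging strongly in $(L^\infty(\Omega))^n$ via the compact embedding, identification of the limit with $\mathcal S_2(\bar\bw)$, and the standard subsequence trick. The difference lies in the identification step. The paper works entirely with the variational characterization: it shows $F_{\bar\bw^{(m)}}(\bu^{(m)}) \to F_{\bar\bw}(\tilde\bu)$ and $F_{\bar\bw^{(m)}}(\bv) \to F_{\bar\bw}(\bv)$ for each competitor $\bv\in\mathcal A$, concludes that the limit point is a minimizer of $F_{\bar\bw}$, and invokes uniqueness of the minimizer. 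This needs no quantitative positivity, since $x\mapsto x\ln x$ is continuous and bounded on $[0,1]$. You instead pass to the limit in the Euler--Lagrange system \eqref{eq:w_to_u_weaknew} and invoke the uniqueness lemma for that system; this is where the uniform lower bound $\delta$ from the second part of Lemma~\ref{lem:pos_minimizers} becomes essential, as it makes $\ln$ Lipschitz on $[\delta,1]$ and lets the logarithmic terms converge in $L^\infty$. Your route buys a more ``PDE-flavoured'' argument that reuses the uniqueness lemma for the weak formulation directly and would survive perturbations of the functional that preserve the Euler--Lagrange structure; the paper's route is slightly leaner in that it bypasses the uniform positivity entirely and only needs continuity of $F$ along the convergent sequences. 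Both are complete, and your handling of the remaining details (closedness of $\mathcal A$, identification of $\nabla u_0^*$, convergence of the right-hand side) is sound.
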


\begin{proof}
 The continuity of the map $\mathcal S_2$ is a consequence of the bounds of Lemma~\ref{lemma2}. Indeed, let $\left( \bar \bw^{(m)} \right)_{m\in \mathbb{N}} \subset (H^2(\Omega))^n$ be a sequence  strongly converging to some 
 $\bar \bw \in (H^2(\Omega))^n$. Set $\bu^{(m)}:= \mathcal S_2(\bar \bw^{(m)})$ for all $m\in \mathbb{N}$. 
 Let us prove that $\left( \bu^{(m)}\right)_{m\in \mathbb{N}}$ strongly converges in $(L^\infty(\Omega))^n$ to $\bu:= \mathcal S_2(\bar\bw)$.  
 
 First of all, since the sequence $\left( \bar \bw^{(m)} \right)_{m\in \mathbb{N}}$ is bounded in $(H^2(\Omega))^n$, then Lemma~\ref{lemma2} entails that also the sequence $\left( \bu^{(m)}\right)_{m\in \mathbb{N}}$ is bounded in $(H^2(\Omega))^n$. 
 Up to the extraction of a subsequence (still denoted by  $\left( \bu^{(m)}\right)_{m\in \mathbb{N}}$ for the sake of simplicity), there exists $\tilde \bu \in (H^2(\Omega))^n$ such that 
 $$
 \bu^{(m)} \mathop{\rightharpoonup} \tilde \bu \mbox{ weakly in } (H^2(\Omega))^n \quad \text{ as }m \to +\infty.
 $$
 Let us prove that  necessarily  $\tilde \bu = \bu$, which will imply that the whole sequence $\left(\bu^{(m)}\right)_{m\in \mathbb{N}}$ weakly converges in $(H^2(\Omega))^n$ to $\bu$. 
 
Let us observe that the compact embeddings $ (H^2(\Omega))^n\hookrightarrow (L^\infty(\Omega))^n$ and $H^2(\Omega))^n\hookrightarrow(H^1(\Omega))^n $ imply  
 $$
  \bu^{(m)} \mathop{\to}  \tilde \bu \mbox{ strongly in }(L^\infty(\Omega))^n \quad \mbox{ and } \quad   u_0^{(m)} \mathop{\to}  \tilde u_0 \mbox{ strongly in }H^1(\Omega)
 $$
 as ${m\to +\infty}$.
 Thus, we obtain  
 $$
 F_{\bar \bw^{(m)}}\left( \bu^{(m)} \right) \mathop{\to}   F_{\bar \bw}\left( \tilde \bu \right) \quad \text{ as } {m\to +\infty}. 
 $$
 Besides, since for all $\bv \in \mathcal A$, $ F_{\bar \bw^{(m)}}\left( \bu^{(m)} \right)  \leq  F_{\bar \bw^{(m)}}\left( \bv \right)$ and
 $\displaystyle  F_{\bar \bw^{(m)}}(\bv)  \mathop{\to}   F_{\bar \bw}(\bv)$ as $m \to +\infty$, we  obtain  
 $$
F_{\bar \bw}\left( \tilde \bu \right) \leq  F_{\bar \bw}(\bv) \quad \forall \, \bv \in \mathcal A.
 $$
 Hence, $\tilde \bu$ is the unique minimizer of $F_{\bar \bw}$ on $\mathcal A$, i.e., $\tilde \bu = \bu$. As a consequence, the whole sequence $\left( \bu^{(m)}\right)_{m\in \mathbb{N}}$ weakly converges to $\bu$ in $(H^2(\Omega))^n$. Finally, 
 the compact embedding $(H^2(\Omega))^n \hookrightarrow (L^\infty(\Omega))^n $ implies that the sequence $\left( \bu^{(m)}\right)_{m\in \mathbb{N}}$ strongly converges in $(L^\infty(\Omega))^n$ to $\bu$, which yields the desired convergence. 
 Hence, the continuity of the map $\mathcal S_2$. 
\end{proof}

\subsection{Proof of Theorem~\ref{thm:existence_time_discrete}}\label{sec:proof1}

\begin{proof}[Proof of Theorem~\ref{thm:existence_time_discrete}]
 Let us define $\mathcal S: \mathcal A \to \mathcal A$ as $\mathcal S= \mathcal S_2 \circ \mathcal S_1$, with $\mathcal S_1: \mathcal A \to (H^2(\Omega))^n$ defined in Section~\ref{subsec:ex_w} and $\mathcal S_2: (H^2(\Omega))^n \to \mathcal A$ defined in Section~\ref{subsec:u_from_w}. 
 Thanks to Lemma~\ref{lem:S1cont} and Lemma~\ref{contS2} we obtain that $\mathcal S$ is continuous. Besides, using Lemma~\ref{thm3} together with Lemma~\ref{lemma2} 
 we obtain that $\mathcal S(\mathcal A)$ is a bounded subset of $(H^2(\Omega))^n$ and hence a relatively compact subset of $(L^\infty(\Omega))^n$. 
 Since $\mathcal A$ is a closed convex non-empty subset of $(L^\infty(\Omega))^n$, Schauder's fixed point theorem ensures the existence of a fixed point $\bu^{p+1} \in \mathcal A$ such that $\bu^{p+1} = \mathcal S(\bu^{p+1})$. 
 Gathering the different results proved in Section~\ref{subsec:ex_w} and Section~\ref{subsec:u_from_w} yield the desired properties on the fixed-point $\bu^{p+1}$. 
\end{proof}

\section{Estimates on the solutions of the time discrete regularized system}\label{apriori-sect}

Let $T>0$ be a fixed final time. 
For all $0< \tau \le 1$ Theorem~\ref{thm:existence_time_discrete} implies that, for any initial condition $\bu^0 \in \mathcal A \cap (H^2(\Omega))^n$, there exists a sequence $(\bu^p)_{p\in  \mathbb{N}} \subset \mathcal A \cap (H^2(\Omega))^n$  
defined by recursion  such that 
$(\bu^{p+1}, \bar \bw^{p+1})\in (\mathcal A \cap (H^2(\Omega))^n) \times (H^2(\Omega))^n$ is a solution to \eqref{dis-sys1}-\eqref{eq:w_to_u_weak0} for all $p\in \mathbb{N}$. 

For all $p\in \mathbb{N}^*$  if $\bu^p:= (u_1^p, \dots, u_n^p)$ and $\bar \bw^p:= (\bar w_1^p, \dots, \bar w_n^p)$, we set 
	\[
	u_0^p:= 1 -\sum_{i=1}^n u_i^p, \quad u^p:=(u_0^p, u_1^p, \dots, u_n^p), \quad \text{and }  w_i^{p}:=  \ln u_i^p - \ln u_0^p \quad \forall \, 1\leq i \leq n. 
	\]
We also denote by $w_0^{p+1/2}:= -\eps \Delta u_0^{p+1} + \beta (1-2u_0^p)$ and finally set  
$w^{p+1}:=(w_0^{p+1/2}, w_1^{p+1}, \dots , w_n^{p+1})$, for all $p\in \mathbb{N}$.

We then define several piecewise constant in time functions as follows: for all $p\in \mathbb{N}^*$, for all $1\leq i \leq n$ and all $t \in (t_{p-1}, t_p]$, we set
\begin{equation}
\label{setting}
\begin{split}
\bu^{(\tau)}(t)  &= \bu^p,\quad u^{(\tau)}(t)  = u^p, \quad u_i^{(\tau)}(t)  = u_i^p,\quad u_0^{(\tau)}(t)  = u_0^p,\\
\bar \bw^{(\tau)} &= \bar \bw^p, \quad \bar w_i^{(\tau)} = \bar w_i^p, w_i^{(\tau)}   = w_i^p = \ln u_i^p - \ln u_0^p, \quad w_0^{(\tau)}  = w_0^{p-1/2}, \quad w^{(\tau)}  = w^p.
\end{split}
\end{equation}
At time $t=0$  we  define $\bu^{(\tau)}(0) =\bu^0$. Let $P^{(\tau)}\in \mathbb{N}^*$ be the lowest integer such that $t_{P^{(\tau)}}\geq T$. 
Furthermore, we introduce the time-shifted solution $\sigma_\tau \bu^{(\tau)}$ as
$$
\sigma_\tau \bu^{(\tau)}(t) = \bu^{p-1} \mbox{ for all } t \in (t_{p-1}, t_p],  \, p\in \mathbb{N}^*,
$$
whose components are given by  $(\sigma_\tau u_1^{(\tau)}, \dots, \sigma_\tau u_n^{(\tau)})$, and set  $\sigma_\tau u_0^{(\tau)}:= 1 - \sum\limits_{i=1}^n \sigma_\tau u_i^{(\tau)}$. For all $\bu = (u_0, u_1, \dots, u_n) \in (L^\infty(\Omega)\cap H^1(\Omega))\times (L^\infty(\Omega))^{n}$  we define
	\[
	\begin{split}
	E_{\text{conv}}(\bu)&= \into \sum_{i=0}^n u_i\ln u_i+ \frac{\eps}{2} |\nabla u_0|^2 dx \\
	\text{and} \quad E_{\text{conc}}(\bu)&= \into \beta u_0(1- u_0) dx.
	\end{split}
	\]
	\medskip
For all $\tau >0$ and $t>0$  we define the entropy functional
	\[
	\begin{split}
	E^{(\tau)}(t)&:= \into u_i^{(\tau)}(t) \ln u_i^{(\tau)}(t)+ \frac{\eps}{2} |\nabla u_0^{(\tau)}(t)|^2+ \beta \sigma_\tau u_0^{(\tau)}(t)(1- \sigma_\tau u_0^{(\tau)}(t)) dx \\
	&= E_{\text{conv}}(\bu^{(\tau)}(t))+ E_{\text{conc}}(\sigma_\tau \bu^{(\tau)}(t))
	\end{split}
	\] 
so that, for all $p\in \mathbb{N}$, 
	\begin{equation*}
	%\label{en-dis}
	\begin{split}
	E^{(\tau)}(t_{p+1})&= \into \sum_{i=0}^n u_i^{p+1} \ln u_i^{p+1}+ \frac{\eps}{2} |\nabla u_0^{p+1}|^2+ \beta u_0^{p}(1- u_0^{p}) dx \\
	&= E_{\text{conv}}(\bu^{p+1}) + E_{\text{conc}}(\bu^{p}). 
	\end{split}
	\end{equation*}
	
\begin{remark}
\label{rem1.1}

It is easy to check that there exists a constant $\widetilde{C}>0$, independent of $\tau$, such that $E\left(\bu^{(\tau)}(t)\right) \geq - \widetilde{C}$ for all $t>0$. 

\end{remark}
The objective of this section is to collect some estimates on the solution $\bu^{(\tau)}$ which will be used in the sequel to pass to the limit as $\tau \to 0^+$ in the time discrete regularized system. We begin by stating an important property of the mobility matrix $M$ which will be used in the following. 
\begin{lemma}
\label{rem1}

Let $\bz\in \mathbb{R}^{n+1}$ and let $M$ be defined as in \eqref{mobility}. Then, for all $\bu\in \mathbb{R}_+^{n+1}$, 
	\[
	\bz^T M(\bu) \bz \ge 0.
	\]
\end{lemma}
	
\begin{proof}
Indeed, for all $\bu=(u_0, \dots, u_n)\in \mathbb{R}_+^{n+1}$ and all $\bz = (z_0, \dots, z_n)\in \mathbb{R}^{n+1}$  we have
	\begin{equation*}
	%\label{M}
	\begin{split}
	 \bz^T M( \bu) \bz & = \sum_{\overset{i,j=0}{j \ne i}}^n  z_i M_{ij}(\bu) z_j + \sum_{i=0}^n M_{ii}(\bu) z_{i}^2\\
	 & = \sum_{\overset{i,j=0}{j \ne i}}^n (-K_{ij}u_iu_j z_i z_j) + \frac{1}{2}\sum_{i=0}^n M_{ii}(\bu) z_{i}^2  + \frac{1}{2}\sum_{j=0}^n M_{jj}(\bu) z_{j}^2\\
	 & =  \sum_{\overset{i,j=0}{j \ne i}}^n (-K_{ij}u_iu_j z_i z_j) + \frac{1}{2} \sum_{\overset{i,j=0}{j \ne i}}^n K_{ij}u_iu_j z_{i}^2+ \frac{1}{2} \sum_{\overset{i,j=0}{i \ne j}}^n K_{ij}u_iu_j z_{j}^2\\
	  & =  \sum_{\overset{i,j=0}{j \ne i}}^n K_{ij}u_iu_j \l(\frac{1}{2}z_i^2 + \frac{1}{2}z_j^2 - z_i z_j\r)\\
	    & = \frac{1}{2} \sum_{\overset{i,j=0}{j \ne i}}^n K_{ij}u_iu_j (z_i - z_j)^2 \ge 0,\\
	\end{split}
	\end{equation*}
which gives us the conclusion.
\end{proof} 
We now state the monotonicity of the energy functional $E^{(\tau)}$.
\begin{lemma}
\label{lemma1}
For all $\tau>0$, the sequence $(E^{(\tau)}(t_p))_{p \in \N^*}$ is non-increasing. Moreover, there exists $C>0$ such that for all $\tau>0$ and all $t>0$, 
	\begin{equation}
	\label{grad1}
	\into |\nabla u_0^{(\tau)}(t)|^2 dx \le C.
	\end{equation}
	
\end{lemma}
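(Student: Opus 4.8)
The plan is to establish the discrete energy dissipation inequality by testing the discrete equation \eqref{dis-sys1} with the correct discrete chemical potentials, and then to read off the gradient bound \eqref{grad1} from the resulting uniform bound on the energy. First I would prove the monotonicity statement $E^{(\tau)}(t_{p+1}) \le E^{(\tau)}(t_p)$. The natural test functions are $\phi_i = \bar w_i^{p+1}$ in \eqref{dis-sys1}, since the left-hand side then produces an expression of the form $\frac{1}{\tau}\into (u_i^{p+1}-u_i^p)\bar w_i^{p+1}\,dx$, which after summation over $i$ should be related, via convexity, to the increment of $E_{\mathrm{conv}}$, while the concave part contributes the explicit $\beta$-term evaluated at the previous time step $p$. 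The key algebraic point is that, recalling $\bar w_i^{p+1} = \ln u_i^{p+1} - \ln u_0^{p+1} + \eps\Delta u_0^{p+1} - \beta(1-2u_0^p)$ from \eqref{discr2}, the sum $\sum_i (u_i^{p+1}-u_i^p)\bar w_i^{p+1}$ recombines (using $u_0 = 1-\sum_i u_i$) into the three pieces: the entropy differences $\sum_{i=0}^n (u_i^{p+1}-u_i^p)\ln u_i^{p+1}$, the Ginzburg--Landau gradient contribution via $\eps\Delta u_0^{p+1}(u_0^{p+1}-u_0^p)$, and the concave term $-\beta(1-2u_0^p)(u_0^{p+1}-u_0^p)$.

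Next I would bound each piece below by the corresponding energy increment. For the entropy, convexity of $x\mapsto x\ln x$ gives $(b-a)\ln b \ge b\ln b - a\ln a$, so $\sum_{i=0}^n (u_i^{p+1}-u_i^p)\ln u_i^{p+1} \ge E_{\mathrm{conv}}^{\mathrm{log}}(\bu^{p+1}) - E_{\mathrm{conv}}^{\mathrm{log}}(\bu^p)$. For the gradient term, an integration by parts turns $\eps\into \Delta u_0^{p+1}(u_0^{p+1}-u_0^p)\,dx$ into $-\eps\into \nabla u_0^{p+1}\cdot\nabla(u_0^{p+1}-u_0^p)\,dx$ (the boundary term vanishing by the Neumann condition of Remark~\ref{rem:CL}), and the elementary inequality $\nabla a\cdot\nabla(a-b)\ge \frac12|\nabla a|^2 - \frac12|\nabla b|^2$ yields exactly $\frac{\eps}{2}\|\nabla u_0^{p+1}\|^2 - \frac{\eps}{2}\|\nabla u_0^p\|^2$. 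For the concave part, here the semi-implicit discretization is essential: since $s\mapsto \beta s(1-s)$ is concave, its supergradient inequality gives $-\beta(1-2u_0^p)(u_0^{p+1}-u_0^p) \le E_{\mathrm{conc}}(\bu^p)-E_{\mathrm{conc}}(\bu^{p+1})$, where evaluating the derivative $\beta(1-2u_0^p)$ at the \emph{old} iterate $p$ is precisely what makes the sign work out. Meanwhile, the right-hand side of \eqref{dis-sys1} (after testing and summing) is $-\into \nabla\bar\bw^{p+1}\cdot(G+H)\nabla\bar\bw^{p+1}\,dx - \tau\|\bar\bw^{p+1}\|_{(H^2)^n}^2 \le 0$, using the positive semi-definiteness from \eqref{eq:bound4} (equivalently Lemma~\ref{rem1}). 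Combining, I would obtain $E^{(\tau)}(t_{p+1}) - E^{(\tau)}(t_p) \le 0$.

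Finally, the gradient bound \eqref{grad1} follows by telescoping: monotonicity gives $E^{(\tau)}(t_p) \le E^{(\tau)}(t_1) \le E(\bu^0)$ for all $p$, and $E(\bu^0)$ is finite and independent of $\tau$ since $\bu^0\in (H^2(\Omega))^{n+1}$. Using Remark~\ref{rem1.1}, which provides a $\tau$-independent lower bound $E(\bu^{(\tau)}(t)) \ge -\widetilde C$, together with the elementary bounds $\sum_i u_i\ln u_i \ge -n|\Omega|/e$ and the boundedness of the $\beta$-term (as $0\le u_0\le 1$), one isolates $\frac{\eps}{2}\into|\nabla u_0^{(\tau)}(t)|^2\,dx \le E(\bu^0) + \widetilde C + n|\Omega|/e + \beta|\Omega|$, giving \eqref{grad1} with a constant $C$ independent of $\tau$ and $t$. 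I expect the main obstacle to be the bookkeeping in the recombination step: justifying that testing with $\bar w_i^{p+1}$ is legitimate (the test functions lie in $H^2(\Omega)$ by Theorem~\ref{thm:existence_time_discrete}, so this is fine) and carefully tracking the constraint $u_0=1-\sum_i u_i$ through the sum so that the $\ln u_0$ and $\eps\Delta u_0$ contributions assemble correctly, while ensuring the concavity inequality is applied with the correct sign dictated by the semi-implicit treatment.
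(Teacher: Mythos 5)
Your proposal follows the paper's proof essentially verbatim: test \eqref{dis-sys1} with $\phi_i=\bar w_i^{p+1}=w_i^{p+1}-w_0^{p+1/2}$, bound the left-hand side from below by $\frac{1}{\tau}\l[E(\bu^{p+1})-E(\bu^p)\r]$ using convexity of $x\mapsto x\ln x$, the elementary gradient inequality after integrating by parts, and concavity of $s\mapsto s(1-s)$ evaluated semi-implicitly at step $p$, discard the non-positive right-hand side via Lemma~\ref{rem1} (and the $-\tau\|\cdot\|_{H^2}^2$ term), and then telescope to isolate $\frac{\eps}{2}\into|\nabla u_0^{p}|^2dx$. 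The only blemish is a sign slip when you list the recombined pieces --- after multiplying by $\sum_{i=1}^n(u_i^{p+1}-u_i^p)=-(u_0^{p+1}-u_0^p)$ the Laplacian piece is $-\eps\Delta u_0^{p+1}(u_0^{p+1}-u_0^p)$ and the concave piece is $+\beta(1-2u_0^p)(u_0^{p+1}-u_0^p)$ --- but the inequalities you subsequently invoke are the ones corresponding to the correct signs, so the argument goes through as in the paper.
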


\begin{proof}

We test each equation in \eqref{dis-sys1} with the test function $\phi_i= \bar w_i^{p+1} = w_i^{p+1}- w_0^{p+1/2} $ and then sum over $i=1,\ldots, n$. On the left-hand side, exploiting the convexity and concavity properties of
the functions $[0,1]\ni x \mapsto x\ln x$ and $[0,1]\ni x \mapsto x(1-x)$, respectively, together with Remark~\ref{rem:CL}, we have
	\begin{equation}
	\label{en-dis1}
	\begin{split}
	&\into \frac{u_i^{p+1}- u_i^p}{\tau}(w_i^{p+1}- w_0^{p+1/2}) dx \\
	&= \sum_{i=1}^n \into \frac{u_i^{p+1}- u_i^p}{\tau} \l[\ln u_i^{p+1} -\ln u_0^{p+1} +\eps \Delta u_0^{p+1} - \beta (1-2u_0^p) \r] dx \\
	&= \sum_{i=1}^n \into \frac{u_i^{p+1}- u_i^p}{\tau} \ln u_i^{p+1} dx+ \into \frac{u_0^{p+1}- u_0^p}{\tau} \left( \ln u_0^{(p+1} - \eps \Delta u_0^{p+1} + \beta (1- 2u_0^p)\right) dx \\
	& \ge \frac{1}{\tau} \l[E_{\text{conv}}({\bu}^{p+1})- E_{\text{conv}}({\bu}^{p})+ E_{\text{conc}}({\bu}^{p+1})- E_{\text{conc}}({\bu}^{p}) \r] \\
	&= \frac{1}{\tau} \l[E({\bu}^{p+1})- E({\bu}^p) \r].
	\end{split}
	\end{equation}
On the right-hand side, exploiting Lemma \ref{rem1} and the definition of the matrix $M$, see \eqref{mobility}, we have
	\begin{equation}
	\label{en-dis2}
	\begin{split}
	&\sum_{i=1}^n \into \biggl(\sum_{1 \le j \ne i \le n} K_{ij} u_i^{p+1} u_j^{p+1} \nabla (w_i^{p+1}- w_j^{p+1}) \\
	& \qquad + K_{i0} u_i^{p+1} u_0^{p+1} \nabla (w_i^{p+1}- w_0^{p+1/2}) \biggr) \cdot \nabla(w_i^{p+1}- w_0^{p+1/2}) dx \\
	&= \into (\nabla {w}^{p+1})^T M(\bu^{p+1}) \nabla { w}^{p+1} dx \le 0.
	\end{split}
	\end{equation}
From \eqref{en-dis1}-\eqref{en-dis2} it follows that
	\[
	\frac{1}{\tau} \l[E({\bu}^{p+1})- E({\bu}^p)\r] \le 0 \quad \forall \, p \in \N,
	\]
which implies that the sequence $(E({\bu}^p))_{p \in \N}$ is non-increasing. In particular, there exists a constant $C>0$ such that $E({ u}^p) \le E({u}^0) \le C$, which in turn entails
	\[
	\frac{\eps}{2} \into |\nabla u_0^p|^2 dx \le C,  
	\]
for every $p \in \N$. Taking into account \eqref{setting}  yields the desired result. 
\end{proof}

We now use the monotonicity of the entropy functional in order to establish some a-priori estimates that will be used to pass to the limit as $\tau \to 0^+$ in the time discrete system.

\begin{lemma}
\label{thm1}
There exists a constant $C>0$, independent of $\tau >0$, such that  
	\begin{align}
	\sum_{i=0}^n \int_0^T  \into \frac{|\nabla u_i^{(\tau)}|^2}{u_i^{(\tau)}} dx dt &\le C,  \nonumber \\ %\label{dis.a} \\
	\int_0^T \into |\Delta u_0^{(\tau)}|^2 dx dt & \le C, \nonumber \\ %\label{dis.e} \\
	\int_0^T \into (1- u_0^{(\tau)}) u_0^{(\tau)} |\nabla w_0^{(\tau)}|^2 dx dt & \le C, \label{dis.d} \\
	\tau \sum_{i=1}^n \int_0^T  \|w_i^{(\tau)}- w_0^{(\tau)}\|_{H^2(\Om)}^2 dt &\le C. \label{dis.b}
	\end{align}
\end{lemma}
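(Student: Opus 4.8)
The plan is to upgrade the one–step energy inequality behind Lemma~\ref{lemma1} into a single uniform space–time bound, and then to read off the four estimates from the algebraic structure of the dissipation.

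\textbf{Step 1 (summation).} Testing \eqref{dis-sys1} with $\phi_i=\bar w_i^{p+1}$ and summing over $i$, the computation \eqref{en-dis1}--\eqref{en-dis2} together with the regularisation term $-\tau\langle \bar w_i^{p+1},\phi_i\rangle_{H^2(\Omega)}$ yields
\[
\into (\nabla w^{p+1})^T M(\bu^{p+1})\nabla w^{p+1}\,dx+\tau\sum_{i=1}^n\|\bar w_i^{p+1}\|_{H^2(\Omega)}^2\le \frac1\tau\bigl[E(\bu^p)-E(\bu^{p+1})\bigr].
\]
Both terms on the left are nonnegative. Multiplying by $\tau$, summing over $p=0,\dots,P^{(\tau)}-1$ and invoking Remark~\ref{rem1.1} (so that $E(\bu^{P^{(\tau)}})\ge-\widetilde C$) makes the right–hand side telescope into $E(\bu^0)+\widetilde C=:C$. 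In the piecewise constant functions of \eqref{setting} this reads
\[
\int_0^T\into (\nabla w^{(\tau)})^T M(u^{(\tau)})\nabla w^{(\tau)}\,dx\,dt+\tau\sum_{i=1}^n\int_0^T\|\bar w_i^{(\tau)}\|_{H^2(\Omega)}^2\,dt\le C,
\]
which, since $\bar w_i^{(\tau)}=w_i^{(\tau)}-w_0^{(\tau)}$, is already estimate \eqref{dis.b}. It remains to exploit the first (master) term.

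\textbf{Step 2 (Fisher information).} By Lemma~\ref{rem1} the master integrand equals $\tfrac12\sum_{i\neq j}K_{ij}u_iu_j|\nabla w_i-\nabla w_j|^2$. Writing $a_i:=\nabla\ln u_i^{(\tau)}$ and $q:=\nabla w_0^{(\tau)}$, and using $\nabla w_i-\nabla w_j=a_i-a_j$ for $i,j\ge1$ while $\nabla w_i-\nabla w_0=(a_i-a_0)-q$, I decompose it as
\[
\underbrace{\tfrac12\sum_{i,j=0}^n K_{ij}u_iu_j|a_i-a_j|^2}_{=:D}+\sum_{i=1}^n K_{i0}u_iu_0\bigl[\,|q|^2-2(a_i-a_0)\cdot q\,\bigr].
\]
For the pure cross–diffusion block $D$ I use the mass constraint $\sum_{i=0}^n u_ia_i=\sum_{i=0}^n\nabla u_i=0$ together with the weighted–variance identity $\tfrac12\sum_{i,j}u_iu_j|a_i-a_j|^2=\sum_i u_i|a_i|^2-\bigl|\sum_i u_i a_i\bigr|^2$, which gives $D\ge K_{\min}\sum_{i=0}^n u_i|a_i|^2=K_{\min}\sum_{i=0}^n |\nabla u_i|^2/u_i$ with $K_{\min}:=\min_{i\neq j}K_{ij}>0$ (interpreting $|\nabla u_i|^2/u_i$ as $0$ on $\{u_i=0\}$). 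Thus, once the indefinite terms are controlled, $D$ delivers the first estimate of Lemma~\ref{thm1}.

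\textbf{Step 3 (the current and $\Delta u_0$).} This is the heart. Using $u_iu_0(a_i-a_0)=u_0\nabla u_i-u_i\nabla u_0$ and the constraint in the form $\sum_{i=1}^n u_i(a_i-a_0)=-a_0$, the cross term collapses (for $i$–independent $K_{i0}=k$; the general case is analogous but heavier) to $-2k\,u_0a_0\cdot q=-2k\,\nabla u_0\cdot\nabla w_0$. Since $w_0^{(\tau)}=-\eps\Delta u_0^{(\tau)}+\beta(1-2\sigma_\tau u_0^{(\tau)})$, integrating by parts against the Neumann condition $\nabla u_0\cdot{\bf n}=0$ turns $\int_0^T\into\nabla u_0\cdot\nabla w_0$ into $\eps\int_0^T\into|\Delta u_0|^2-2\beta\int_0^T\into\nabla u_0\cdot\nabla\sigma_\tau u_0$, the last integral being bounded by \eqref{grad1}. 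Hence the master term equals
\[
\int_0^T\into D\,dx\,dt+\int_0^T\into\sum_{i=1}^n K_{i0}u_iu_0|\nabla w_0|^2\,dx\,dt+2k\eps\int_0^T\into|\Delta u_0|^2\,dx\,dt-C',
\]
with all three integrals nonnegative; boundedness of the master term then yields \emph{simultaneously} the Fisher bound, estimate \eqref{dis.d} (note $\sum_i K_{i0}u_iu_0|\nabla w_0|^2\ge K_{\min}(1-u_0)u_0|\nabla w_0|^2$), and the $L^2$–bound on $\Delta u_0^{(\tau)}$.

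\textbf{Main obstacle.} The difficulty is precisely Step~3: pointwise the quadratic form has a nontrivial kernel (take $a_i-a_0\equiv q$ for all $i\ge1$) along which the Fisher information and $(1-u_0)u_0|\nabla w_0|^2$ are both positive, so the estimates cannot be separated by any purely algebraic, pointwise argument. They decouple only \emph{after} integration, where the fourth–order field $\nabla w_0=-\eps\nabla\Delta u_0+\dots$ is integrated by parts against $\nabla u_0$ to manufacture the coercive term $\eps\|\Delta u_0\|_{L^2}^2$. Tracking the signs through this integration by parts, and carrying out the analogous computation for $i$–dependent $K_{i0}$ (where $\operatorname{div}(u_0\nabla u_i-u_i\nabla u_0)=u_0\Delta u_i-u_i\Delta u_0$ generates $\Delta u_i$–terms that must be reabsorbed using Lemma~\ref{lemma2}), is the technical crux; the justification of the boundary terms and the measure–zero issues on $\{u_i=0\}$ are routine.
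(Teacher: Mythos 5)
Your proof follows essentially the same route as the paper's: test \eqref{dis-sys1} with $\bar w_i^{p+1}$, use the convexity/concavity argument of \eqref{en-dis1} to bound the left-hand side by the energy increment, telescope via Remark~\ref{rem1.1}, and decompose the dissipation into the Fisher information, the coercive term $u_0(1-u_0)|\nabla w_0|^2$, and a cross term that is integrated by parts against the Neumann condition to produce $\eps\|\Delta u_0\|_{L^2}^2$. Your Step~2 (weighted-variance identity plus $\sum_i \nabla u_i=0$) is a slightly slicker packaging of the paper's direct computation of $B_1$, but it is the same estimate. Two points need attention. First, a sign slip: with your conventions the cross term collapses to $+2k\,u_0a_0\cdot q=+2k\,\nabla u_0\cdot\nabla w_0$, not $-2k\,\nabla u_0\cdot\nabla w_0$ (indeed $\sum_{i\ge 1}u_iu_0(a_i-a_0)=-\nabla u_0$, and the cross term carries a factor $-2$); your final displayed identity for the master term has the correct sign $+2k\eps\int|\Delta u_0|^2$, so the error does not propagate, but as written the intermediate step would give the wrong sign in front of $\|\Delta u_0\|_{L^2}^2$ and destroy the argument.

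Second, and more substantively: your proposed treatment of $i$-dependent $K_{i0}$ — integrating by parts term by term and ``reabsorbing'' the resulting $\Delta u_i$ contributions via Lemma~\ref{lemma2} — would not work, because the $H^2$ bounds of Lemma~\ref{lemma2} are controlled only through $\|\bar\bw\|_{(H^2(\Omega))^n}$, which by Lemma~\ref{thm3} is bounded by a constant of order $\tau^{-2}$; any estimate routed through it cannot be uniform in $\tau$. The correct device — and the one the paper uses — is the same minimum-extraction you already apply to the block $D$: write $K_{ij}=k+(K_{ij}-k)$ with $k=\min_{i\neq j}K_{ij}$, observe that the quadratic form with coefficients $K_{ij}-k\ge 0$ is positive semi-definite by Lemma~\ref{rem1} and may simply be discarded, and run your Step~3 computation on the uniform-$k$ part, where the collapse $\sum_{i\ge1}k\,u_iu_0(a_i-a_0)=-k\nabla u_0$ is exact. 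With that replacement your argument is complete and coincides with the paper's proof.
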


\begin{proof}
First of all, let us introduce $\displaystyle k := \min_{0 \le i \ne j \le n} K_{ij}$. 
We test each equation of \eqref{dis-sys1} with $ \phi_i= \bar w_i^{p+1} = w_i^{p+1}- w_0^{p+1/2}$ and sum for $i=1,\ldots, n$. 
%Then we add and subtract the terms $ K \displaystyle \sum_{1 \le j \ne i \le n}  u_i^{p+1} u_j^{p+1} \nabla(w_i^{p+1}- w_j^{p+1})$ and $K u_i^{p+1} u_0^{p+1} \nabla (w_i^{p+1}- w_0^{p+1/2})$, respectively, and sum for $i=1,\ldots, n$. 
On the right-hand side  we have
	\begin{equation}
	\label{est1}
	\begin{split}
	\mathcal A:=& -\sum_{i=1}^n \into \biggl(\sum_{1 \le j \ne i \le n} K_{ij} u_i^{p+1} u_j^{p+1} \nabla (w_i^{p+1}- w_j^{p+1}) \\
	& \qquad + K_{i0} u_i^{p+1} u_0^{p+1} \nabla (w_i^{p+1}- w_0^{p+1/2}) \biggr) \cdot \nabla (w_i^{p+1}- w_0^{p+1/2}) dx \\
	& \qquad -\tau \Vert w_i^{p+1}- w_0^{p+1/2} \Vert_{H^2(\Omega)}^2 \\
	&= -\sum_{i=1}^n \into \biggl(\sum_{1 \le j \ne i \le n} (K_{ij}- k) u_i^{p+1} u_j^{p+1} \nabla (w_i^{p+1}- w_j^{p+1}) \\
	& \qquad + (K_{i0}- k) u_i^{p+1} u_0^{p+1} \nabla (w_i^{p+1}- w_0^{p+1/2}) \biggr) \cdot \nabla (w_i^{p+1}- w_0^{p+1/2})dx \\ 
	& \qquad-k \sum_{i=1}^n \into \biggl(\sum_{1 \le j \ne i \le n} u_i^{p+1} u_j^{p+1} \nabla (w_i^{p+1}- w_j^{p+1}) \\
	& \qquad + u_i^{p+1} u_0^{p+1} \nabla (w_i^{p+1}- w_0^{p+1/2}) \biggr) \cdot \nabla (w_i^{p+1}- w_0^{p+1/2})dx \\
	& \qquad -\tau \sum_{i=1}^n \Vert w_i^{p+1}- w_0^{p+1/2} \Vert_{H^2(\Omega)}^2 \\
	&= A+ B+ C.
	\end{split}
	\end{equation}
First of all observe that
	\[
	A= -\into (\nabla {\bw}^{p+1})^t \tilde M({\bu}^{p+1}) \nabla  {\bw}^{p+1} dx  \le 0,
	\]
where $\tilde M$ is the matrix defined as in \eqref{mobility} but with $K_{ij}$ replaced by $K_{ij}- k$  and where we used again Lemma \ref{rem1}. 
Let us consider the second term in \eqref{est1}. We have
	\begin{equation*}
	\begin{split}
	B&= -k \sum_{i=1}^n \into \biggl(\sum_{1 \le j \ne i \le n} u_i^{p+1} u_j^{p+1} \nabla (w_i^{p+1}- w_j^{p+1}) \\
	& \quad + u_i^{p+1} u_0^{p+1} \nabla (w_i^{p+1}- w_0^{p+1/2}) \biggr) \cdot \nabla (w_i^{p+1}- w_0^{p+1/2}) dx \\
	&= -k \sum_{i=1}^n \into \biggl(\sum_{1 \le j \ne i \le n}  u_i^{p+1} u_j^{p+1} \nabla(w_i^{p+1}- w_j^{p+1})+ u_i^{p+1} u_0^{p+1} \nabla w_i^{p+1}\biggr) \cdot \nabla w_i^{p+1} dx \\
	& \quad +k \sum_{i=1}^n \into \sum_{1 \le j \ne i \le n}  u_i^{p+1} u_j^{p+1} \nabla(w_i^{p+1}- w_j^{p+1}) \cdot \nabla w_0^{p+1/2} dx \\
	& \quad +2k \sum_{i=1}^n \into u_i^{p+1} u_0^{p+1} \nabla w_i^{p+1} \cdot \nabla w_0^{p+1/2} dx- k \sum_{i=1}^n \into u_i^{p+1} u_0^{p+1} |\nabla w_0^{p+1/2}|^2 dx \\
	&=: B_1+ B_2+ B_3+ B_4.
	\end{split}
	\end{equation*}
We estimate the terms of the expression above separately. First of all, taking into account \eqref{wi}  we have
	\begin{equation*}
	\begin{split}
	 B_1 &= -k \sum_{i=1}^n \into \l(\sum_{1 \le j \ne i \le n} u_i^{p+1} u_j^{p+1} \nabla (w_i^{p+1}- w_j^{p+1})+ u_i^{p+1} u_0^{p+1} \nabla w_i^{p+1} \r) \cdot \nabla w_i^{p+1} dx \\
	&= -k \sum_{i=1}^n  \int_\Omega  \sum_{0\leq j \neq i \leq n} u_i^{p+1} u_j^{p+1} \left( \nabla \ln u_i^{p+1} - \nabla \ln u_j^{p+1} \right) \cdot \nabla \left(\ln u_i^{p+1} - \ln u_0^{p+1}\r) dx\\
 	& = -k \sum_{i=1}^n \int_\Omega \sum_{0\leq j \neq i \leq n} (u_j^{p+1} \nabla u_i^{p+1} - u_i^{p+1} \nabla u_j^{p+1}) \cdot \left( \frac{\nabla u_i^{p+1}}{u_i^{p+1}} - \frac{\nabla u_0^{p+1}}{u_0^{p+1}}\right) dx \\
	& = -k \sum_{i=1}^n \int_\Omega ((1-u_i^{p+1})\nabla u_i^{p+1} - u_i^{p+1} \nabla (1-u_i^{p+1})) \cdot \left( \frac{\nabla u_i^{p+1}}{u_i^{p+1}} - \frac{\nabla u_0^{p+1}}{u_0^{p+1}}\right) dx \\
 	& = -k \sum_{i=1}^n \int_\Omega (\nabla u_i^{p+1} -u_i^{p+1} \nabla u_i^{p+1} + u_i^{p+1} \nabla u_i^{p+1}) \cdot \left( \frac{\nabla u_i^{p+1}}{u_i^{p+1}} - \frac{\nabla u_0^{p+1}}{u_0^{p+1}}\right) dx \\
 	& = -k \sum_{i=1}^n \int_\Omega \frac{|\nabla u_i^{p+1}|^2}{u_i^{p+1}} - \frac{\nabla u_i^{p+1} \cdot \nabla u_0^{p+1}}{u_0^{p+1}} dx \\
	&= - k \int_\Omega \sum_{i=1}^n  \frac{|\nabla u_i^{p+1}|^2}{u_i^{p+1}} - \frac{\nabla (1-u_0^{p+1})\cdot \nabla u_0^{p+1}}{u_0^{p+1}} dx \\
&= -k \into \sum_{i=1}^n \frac{\vert \nabla u_i^{p+1} \vert^2}{u_i^{p+1}}+ \frac{\vert \nabla u_0^{p+1} \vert^2}{u_0^{p+1}} dx \\
& =  - k \int_\Omega \sum_{i=0}^n  \frac{|\nabla u_i^{p+1}|^2}{u_i^{p+1}} dx. 
	\end{split}
	\end{equation*}
Exploiting symmetries gives
	\begin{equation*}
	B_2= k \sum_{i=1}^n \into \sum_{1 \le j \ne i \le n}  u_i^{p+1} u_j^{p+1} \nabla(w_i^{p+1}- w_j^{p+1}) \cdot \nabla w_0^{p+1/2} dx= 0.
	\end{equation*}
Moreover, taking into account the definition of $w_0^{p+1/2}$, see \eqref{w0p}, and using again the fact that $\nabla u_0^{p+1}\cdot {\bf n} = 0$ on $\partial \Omega$, we have
	\begin{equation*}
	\begin{split}
	B_3 &= 2k \sum_{i=1}^n \int_\Omega  u_i^{p+1} u_0^{p+1} \nabla w_0^{p+1/2} \cdot \nabla w_i^{p+1} dx \\
	& = 2k \sum_{i=1}^n \int_\Omega  u_i^{p+1} u_0^{p+1} \nabla w_0^{p+1/2} \cdot \left( \frac{\nabla u_i^{p+1}}{u_i^{p+1}} - \frac{\nabla u_0^{p+1}}{u_0^{p+1}}\right) dx\\
	&  = 2k \sum_{i=1}^n \int_\Omega \left(u_0^{p+1} \nabla u_i^{p+1} - u_i^{p+1} \nabla u_0^{p+1}\right)\cdot \nabla w_0^{p+1/2} dx\\
	&  =  2k \int_\Omega \left(u_0^{p+1} \nabla (1-u_0^{p+1}) - (1-u_0^{p+1}) \nabla u_0^{p+1}\right)\cdot \nabla w_0^{p+1/2} dx\\
	&  =  -2k \int_\Omega \nabla u_0^{p+1} \cdot \nabla w_0^{p+1/2} dx,\\
	&= 4 k\beta \int_\Omega \nabla u_0^{p+1} \cdot \nabla u_0^p dx - 2k \int_\Omega \nabla u_0^{p+1} \cdot \nabla (-\eps \Delta u_0^{p+1}) dx\\
	& \le 4 k\beta \|\nabla u_0^{p+1}\|_{L^2(\Om)} \|\nabla u_0^p\|_{L^2(\Om)} - 2k\eps \int_\Omega |\Delta u_0^{p+1}|^2dx \\
	& \le C- 2k\eps \int_\Omega |\Delta u_0^{p+1}|^2dx,
	\end{split}
	\end{equation*}
where in the last two passages we applied the Cauchy-Schwarz's inequality and then \eqref{grad1}. Finally, by using the constraint \eqref{constr} we get
	\begin{equation}
	\label{est1.5}
	B_4= -k \sum_{i=1}^n \into u_i^{p+1} u_0^{p+1} |\nabla w_0^{p+1/2}|^2 dx= -k \into (1- u_0^{p+1}) u_0^{p+1} |\nabla w_0^{p+1/2}|^2 dx.
	\end{equation}
From \eqref{est1}-\eqref{est1.5} we then have
	\[
	\begin{split}
	\mathcal A& \le -k \sum_{i=0}^n \into \frac{|\nabla u_i^{p+1}|^2}{u_i^{p+1}} dx+ C- 2k\eps \into |\Delta u_0^{p+1}|^2 dx \\
	& \qquad -k \into (1- u_0^{p+1}) u_0^{p+1} |\nabla w_0^{p+1/2}|^2 dx.
	\end{split}
	\]
Therefore, reasoning as in the proof of Lemma \ref{lemma1} gives
	\[
	\begin{split}
	&k \sum_{i=0}^n \into \frac{|\nabla u_i^{p+1}|^2}{u_i^{p+1}} dx+ 2k\eps \into |\Delta u_0^{p+1}|^2 dx+ k \into (1- u_0^{p+1}) u_0^{p+1} |\nabla w_0^{p+1/2}|^2 dx\\
	& \quad + \tau \sum_{i=1}^n \|w_i^{p+1}- w_0^{p+1/2}\|_{H^2(\Om)}^2 \\
	& \le C+ \frac{1}{\tau}\left( E({\bu}^p) -  E({\bu}^{p+1}\right) ).\\
	\end{split}
	\]
Multiplying this inequality by $\tau$, summing for $0\leq p \leq P^{(\tau)}-1$, and then using Remark \ref{rem1.1} yields 
\begin{align*}
 & k \sum_{i=1}^n \int_0^T \into \frac{|\nabla u_i^{(\tau)}|^2}{u_i^{(\tau)}} dx dt + 2k\eps \int_0^T \into |\Delta u_0^{(\tau)}|^2 dx dt + k \int_0^T \into (1- u_0^{(\tau)}) u_0^{(\tau)} |\nabla w_0^{(\tau)}|^2 dx \\
 & \qquad + \tau \int_0^T \sum_{i=1}^n \|w_i^{(\tau)}- w_0^{(\tau)}\|_{H^2(\Om)}^2 dt \\
 & \leq C(T+1) + E({\bu}^0) + \widetilde{C},
\end{align*}
which gives the desired result.
\end{proof}

\begin{remark}
\label{rem2}
From \eqref{dis.b}  we have in particular that $\l(\sqrt \tau (w_i^{(\tau)}- w_0^{(\tau)})\r)_{\tau >0}$ is uniformly bounded in $L^2((0,T); H^2(\Om))$. 
\end{remark}
Using similar arguments as in Lemma~\ref{thm1}, we can obtain further estimates. More precisely, we have the following result.
\begin{theorem}
\label{thm2}
There exists a constant $C>0$, independent of $\tau>0$, such that 
	\begin{equation*}
	%\label{dis.c}
	\int_0^T \into u_i^{(\tau)} u_0^{(\tau)} |\nabla ( w_i^{(\tau)}- w_0^{(\tau)})|^2 dx dt \le C \quad \text{ for all } 1\leq i \leq n.
	\end{equation*}
\end{theorem}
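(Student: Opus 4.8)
The plan is to avoid any new testing argument and instead control the integrand pointwise by the three a priori bounds already contained in Lemma~\ref{thm1}. The starting point is a decomposition of the gradient. On each time slab $(t_{p-1},t_p]$ one has $w_i^{(\tau)}-w_0^{(\tau)}=\bar w_i^p\in H^2(\Omega)$ by Theorem~\ref{thm:existence_time_discrete}, and since the discrete iterates are bounded away from zero a.e. by \eqref{eq:positiv}, the functions $\ln u_i^{(\tau)}$ and $\ln u_0^{(\tau)}$ are weakly differentiable with $\nabla\ln u_i^{(\tau)}=\nabla u_i^{(\tau)}/u_i^{(\tau)}$. Writing $w_i^{(\tau)}=\ln u_i^{(\tau)}-\ln u_0^{(\tau)}$, I would first record that, for fixed $\tau$, all three terms below are genuine $L^2$ functions and
\[
\nabla\bigl(w_i^{(\tau)}-w_0^{(\tau)}\bigr)=\frac{\nabla u_i^{(\tau)}}{u_i^{(\tau)}}-\frac{\nabla u_0^{(\tau)}}{u_0^{(\tau)}}-\nabla w_0^{(\tau)}\qquad\text{a.e. in }(0,T)\times\Omega.
\]

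Next I would apply the elementary inequality $|a-b-c|^2\le 3(|a|^2+|b|^2+|c|^2)$ and multiply through by the weight $u_i^{(\tau)}u_0^{(\tau)}$, obtaining
\[
u_i^{(\tau)}u_0^{(\tau)}\bigl|\nabla(w_i^{(\tau)}-w_0^{(\tau)})\bigr|^2
\le 3\left(u_0^{(\tau)}\frac{|\nabla u_i^{(\tau)}|^2}{u_i^{(\tau)}}+u_i^{(\tau)}\frac{|\nabla u_0^{(\tau)}|^2}{u_0^{(\tau)}}+u_i^{(\tau)}u_0^{(\tau)}|\nabla w_0^{(\tau)}|^2\right).
\]
Using the box constraints $0\le u_i^{(\tau)},u_0^{(\tau)}\le 1$ to bound the first two prefactors by $1$, and the identity-induced inequality $u_i^{(\tau)}\le 1-u_0^{(\tau)}$ (valid since $\sum_{j=0}^n u_j^{(\tau)}=1$ with nonnegative components) to replace $u_i^{(\tau)}u_0^{(\tau)}\le(1-u_0^{(\tau)})u_0^{(\tau)}$ in the last term, I arrive at the pointwise bound
\[
u_i^{(\tau)}u_0^{(\tau)}\bigl|\nabla(w_i^{(\tau)}-w_0^{(\tau)})\bigr|^2
\le 3\left(\frac{|\nabla u_i^{(\tau)}|^2}{u_i^{(\tau)}}+\frac{|\nabla u_0^{(\tau)}|^2}{u_0^{(\tau)}}+(1-u_0^{(\tau)})u_0^{(\tau)}|\nabla w_0^{(\tau)}|^2\right).
\]
Integrating over $(0,T)\times\Omega$, the three terms on the right are exactly the quantities already estimated in Lemma~\ref{thm1}: the first two are individual nonnegative summands of the bound $\sum_{i=0}^n\int_0^T\into |\nabla u_i^{(\tau)}|^2/u_i^{(\tau)}\,dx\,dt\le C$, hence each is itself $\le C$, while the third is precisely \eqref{dis.d}. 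This yields a constant $C$ independent of $\tau$, as required; indeed these are the very terms $B_1$ and $B_4$ that appeared in the proof of Lemma~\ref{thm1}, which is why the statement advertises ``similar arguments''.

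The one point that needs care, and which I regard as the only real obstacle, is that $\nabla w_0^{(\tau)}$ is \emph{not} controlled in $L^2$ uniformly in $\tau$ — only its weighted version $(1-u_0^{(\tau)})u_0^{(\tau)}|\nabla w_0^{(\tau)}|^2$ is, through \eqref{dis.d}. For this reason one must keep the weight $u_i^{(\tau)}u_0^{(\tau)}$ attached to the $|\nabla w_0^{(\tau)}|^2$ term throughout and exploit $u_i^{(\tau)}\le 1-u_0^{(\tau)}$ to match it with \eqref{dis.d}, rather than splitting the decomposition in an unweighted manner. Everything else is the routine bookkeeping of translating the piecewise-constant-in-time functions back to the discrete iterates on each slab $(t_{p-1},t_p]$.
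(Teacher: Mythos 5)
Your proof is correct, but it takes a genuinely different route from the paper's. The paper proves Theorem~\ref{thm2} by repeating the dissipation argument of Lemma~\ref{thm1}: it tests \eqref{dis-sys1} with $\phi_i=w_i^{p+1}-w_0^{p+1/2}$, splits off the diagonal block $-\sum_i K_{i0}\into u_i^{p+1}u_0^{p+1}|\nabla(w_i^{p+1}-w_0^{p+1/2})|^2\,dx$ from the full quadratic form, discards the remainder $-\into(\nabla w^{p+1})^TM(\bu^{p+1})\nabla w^{p+1}\,dx\le 0$ via Lemma~\ref{rem1}, and then sums over $p$ against the telescoping energy as before. You instead bypass any new testing and deduce the bound pointwise from the three estimates already recorded in Lemma~\ref{thm1}, via the decomposition $\nabla(w_i^{(\tau)}-w_0^{(\tau)})=\nabla u_i^{(\tau)}/u_i^{(\tau)}-\nabla u_0^{(\tau)}/u_0^{(\tau)}-\nabla w_0^{(\tau)}$, the inequality $|a-b-c|^2\le 3(|a|^2+|b|^2+|c|^2)$, and the weight bookkeeping $u_0^{(\tau)}\le 1$, $u_i^{(\tau)}\le 1-u_0^{(\tau)}$. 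Each ingredient is legitimate: by \eqref{eq:positiv} the iterates are bounded below by $\delta_p>0$, so the logarithms are in $H^1$ with the stated chain rule, and $\nabla w_0^{p+1/2}=\nabla w_i^{p+1}-\nabla\bar w_i^{p+1}\in L^2(\Omega)$ for fixed $\tau$ (the same fact the paper uses implicitly in the term $B_4$ of Lemma~\ref{thm1}); you also correctly identify that the only uniform-in-$\tau$ control on $\nabla w_0^{(\tau)}$ is the weighted one \eqref{dis.d}, which is exactly why the factor $u_i^{(\tau)}u_0^{(\tau)}\le(1-u_0^{(\tau)})u_0^{(\tau)}$ must be kept attached to that term. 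What each approach buys: the paper's version stays entirely within the entropy-dissipation framework and would survive if the explicit formula $w_i=\ln u_i-\ln u_0$ were replaced by a more general entropy, since it never differentiates the logarithm again; yours is shorter and makes transparent that Theorem~\ref{thm2} is not new information but an algebraic consequence of Lemma~\ref{thm1} (at the harmless cost of a factor $3$ in the constant). Both are valid proofs of the statement.
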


\begin{proof}

We argue as in the proof of Lemma~\ref{thm1}. First of all, we test each equation in \eqref{dis-sys2} with $\phi_i=  w_i^{p+1}- w_0^{p+1/2}$ and sum for $i =1,\ldots, n$. On the right-hand side  we have
	\begin{equation*}
	%\label{est2.1}
	\begin{split}
	& -\sum_{i=1}^n \into \biggl(\sum_{1 \le j \ne i \le n} K_{ij} u_i^{p+1} u_j^{p+1} \nabla (w_i^{p+1}-  w_j^{p+1})\\
	& \qquad \qquad + K_{i0} u_i^{p+1} u_0^{p+1} \nabla ( w_i^{p+1}- w_0^{p+1/2}) \biggr) \cdot \nabla ( w_i^{p+1}- w_0^{p+1/2})  dx \\
	& \quad - \sum_{i=1}^n \tau \| w_i^{p+1}- w_0^{p+1/2}\|_{H^2(\Omega)}^2 \\
	&= - \sum_{i=1}^n \into \sum_{1 \le j \ne i \le n} K_{ij} u_i^{p+1} u_j^{p+1} \nabla (w_i^{p+1}-  w_j^{p+1}) \cdot \nabla  (w_i^{p+1}- w_0^{p+1/2}) dx \\
	& \quad - \sum_{i=1}^n \into K_{i0} u_i^{p+1} u_0^{p+1} |\nabla  (w_i^{p+1}- w_0^{p+1/2})|^2 dx- \tau \sum_{i=1}^n \| w_i^{p+1}- w_0^{p+1/2}\|_{H^2(\Om)}^2 \\
	& \le -\into (\nabla  { w}^{p+1})^T M({ u}^{p+1}) \nabla { w}^{p+1} dx- \sum_{i=1}^n \into K_{i0} u_i^{p+1} u_0^{p+1} |\nabla  (w_i^{p+1}- w_0^{p+1/2})|^2 dx \\
	& \le - \sum_{i=1}^n \into K_{i0} u_i^{p+1} u_0^{p+1} |\nabla  (w_i^{p+1}- w_0^{p+1/2})|^2 dx,
	\end{split}
	\end{equation*}
where we applied Lemma \ref{rem1} with the vectors $ {w}^{p+1}$ and $u^{p+1} $
and the matrix $M$ given by \eqref{mobility}. Then reasoning again as in Theorem~\ref{thm1} gives
	\[
	\sum_{i=1}^n \int_0^T  \into K_{i0} u_i^{(\tau)} u_0^{(\tau)} |\nabla  (w_i^{(\tau)}- w_0^{(\tau)})|^2 dx dt \le C,
	\]
and hence the conclusion follows.
\end{proof}

We finally point out that the a-priori estimates collected in Lemmas \ref{thm1}-\ref{thm2} allow us to get the following

\begin{lemma}
There exists $C>0$, independent of $\tau>0$, such that 
$$
\int_0^T \left\|\frac{u_i^{(\tau)}(t) - \sigma_\tau u_i^{(\tau)}(t)}{\tau}\right\|_{(H^{2}(\Omega))'}^2 \,dt \leq  C \quad \text{ for all }  1\leq i \leq n. 
$$

\end{lemma}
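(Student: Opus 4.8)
The plan is to test the discrete weak formulation \eqref{dis-sys1} against an arbitrary $\phi_i\in H^2(\Omega)$ with $\|\phi_i\|_{H^2(\Omega)}\le 1$ and to estimate the dual norm of the difference quotient term by term, exploiting the a priori bounds of Lemma~\ref{thm1}. Fix $1\le i\le n$ and $t\in(t_{p-1},t_p]$, so that $\frac{u_i^{(\tau)}(t)-\sigma_\tau u_i^{(\tau)}(t)}{\tau}=\frac{u_i^p-u_i^{p-1}}{\tau}$ and, by \eqref{dis-sys1} (with $p$ replaced by $p-1$), for every such $\phi_i$,
\begin{equation*}
\into\frac{u_i^p-u_i^{p-1}}{\tau}\phi_i\,dx=-\into F_i^p\cdot\nabla\phi_i\,dx-\tau\langle\bar w_i^p,\phi_i\rangle_{H^2(\Omega)},
\end{equation*}
where, using $\bar w_i^p-\bar w_j^p=w_i^p-w_j^p$ and the algebraic identities already exploited in passing from \eqref{eqi} to \eqref{eq0}, the flux reads
\begin{equation*}
F_i^p=\sum_{1\le j\ne i\le n}K_{ij}(u_j^p\nabla u_i^p-u_i^p\nabla u_j^p)+K_{i0}(u_0^p\nabla u_i^p-u_i^p\nabla u_0^p)-K_{i0}\kappa_i^p J^p,
\end{equation*}
with $J^p:=u_0^p(1-u_0^p)\nabla w_0^{p-1/2}$ and $\kappa_i^p:=\kappa(u_i^p,u_0^p)$.

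Next I would bound $\|F_i^p\|_{(L^2(\Omega))^d}$ using only the box constraints guaranteed by $\bu^p\in\mathcal A$. Since $0\le u_j^p\le 1$ for all $j$, each Fick-type contribution is controlled by a single gradient, e.g. $\|u_j^p\nabla u_i^p\|_{L^2(\Omega)}\le\|\nabla u_i^p\|_{L^2(\Omega)}$; and since $0\le\kappa_i^p\le 1$ (because $u_i^p\le 1-u_0^p=\sum_{j=1}^n u_j^p$) together with $u_0^p(1-u_0^p)\le 1$, we obtain $\|\kappa_i^p J^p\|_{L^2(\Omega)}^2\le\|J^p\|_{L^2(\Omega)}^2\le\into u_0^p(1-u_0^p)|\nabla w_0^{p-1/2}|^2\,dx$. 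Hence, by Cauchy--Schwarz and $\|\nabla\phi_i\|_{L^2(\Omega)}\le\|\phi_i\|_{H^2(\Omega)}$, taking the supremum over $\|\phi_i\|_{H^2(\Omega)}\le 1$ yields
\begin{equation*}
\left\|\frac{u_i^p-u_i^{p-1}}{\tau}\right\|_{(H^2(\Omega))'}\le C\Big(\sum_{j=0}^n\|\nabla u_j^p\|_{L^2(\Omega)}+\|J^p\|_{(L^2(\Omega))^d}\Big)+\tau\|\bar w_i^p\|_{H^2(\Omega)}.
\end{equation*}

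Finally I would square this inequality, multiply by $\tau$ and sum over the intervals meeting $(0,T)$ — equivalently, integrate the piecewise-constant quantities over $(0,T)$ — and invoke Lemma~\ref{thm1}. The gradient terms are controlled because $u_j^{(\tau)}\le 1$ gives $\int_0^T\into|\nabla u_j^{(\tau)}|^2\,dx\,dt\le\int_0^T\into\frac{|\nabla u_j^{(\tau)}|^2}{u_j^{(\tau)}}\,dx\,dt\le C$ (first estimate of Lemma~\ref{thm1}); the term $\int_0^T\|J^{(\tau)}\|_{(L^2(\Omega))^d}^2\,dt$ is dominated by the degenerate dissipation \eqref{dis.d}; and the regularization term contributes $\int_0^T\tau^2\|\bar w_i^{(\tau)}\|_{H^2(\Omega)}^2\,dt=\tau\big(\tau\int_0^T\|w_i^{(\tau)}-w_0^{(\tau)}\|_{H^2(\Omega)}^2\,dt\big)\le C\tau\le C$ by \eqref{dis.b} (equivalently Remark~\ref{rem2}), since $\tau\le 1$ and $\bar w_i^{(\tau)}=w_i^{(\tau)}-w_0^{(\tau)}$. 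Summing the three contributions gives the claimed uniform bound. The only genuinely delicate point is the treatment of the degenerate Cahn--Hilliard flux $K_{i0}\kappa_i J$: one must split off the factor $\kappa_i\in[0,1]$ and recognize that the remaining $\|J\|_{L^2}^2$ is exactly dominated by the degenerate dissipation \eqref{dis.d}, rather than by any separate bound on $\nabla w_0$, which is not available in the degenerate regime.
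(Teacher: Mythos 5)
Your proposal is correct, and the overall strategy (test \eqref{dis-sys1} with $\phi_i\in H^2(\Omega)$, $\|\phi_i\|_{H^2(\Omega)}\le 1$, apply Cauchy--Schwarz, then integrate in time using the a priori bounds) is the same as the paper's. The one genuine difference lies in how the cross term $K_{i0}u_i u_0\nabla(w_i-w_0^{p+1/2})$ is handled. The paper keeps this product intact, bounds it by $\sqrt{u_iu_0}\cdot\sqrt{u_iu_0}\,|\nabla(w_i-w_0^{p+1/2})|$ and controls its time integral via the dissipation estimate of Theorem~\ref{thm2}, namely $\int_0^T\into u_i^{(\tau)}u_0^{(\tau)}|\nabla(w_i^{(\tau)}-w_0^{(\tau)})|^2\,dx\,dt\le C$ (the displayed final inequality in the paper, which drops the weight and writes $\|w_i^{p+1}-w_0^{p+1/2}\|_{H^1(\Omega)}$, is only justified through that weighted estimate, since no unweighted $L^2_tH^1_x$ bound on $w_i^{(\tau)}-w_0^{(\tau)}$ is available). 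You instead expand this term algebraically into $u_0\nabla u_i-u_i\nabla u_0-\kappa_i J$ --- legitimate thanks to the strict positivity \eqref{eq:positiv} --- and control the Fickian pieces by the first estimate of Lemma~\ref{thm1} and the degenerate flux by \eqref{dis.d}, using $0\le\kappa_i\le1$ and $u_0(1-u_0)\le1$. Your route therefore needs only Lemma~\ref{thm1} (not Theorem~\ref{thm2}), and it has the additional merit of producing exactly the flux decomposition that appears in Definition~\ref{def1} and in the passage to the limit in Section~\ref{sect4}; the paper's route is marginally shorter but relies on the extra estimate of Theorem~\ref{thm2}. All the individual bounds you invoke ($u_j\le1$, $\kappa_i\in[0,1]$, $|J^p|^2\le u_0^p(1-u_0^p)|\nabla w_0^{p-1/2}|^2$, and $\tau^2\int_0^T\|\bar w_i^{(\tau)}\|_{H^2(\Omega)}^2\,dt\le C\tau$) are correct and consistent with the piecewise-constant notation \eqref{setting}.
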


% \begin{remark}
%  Let us point out here that (\ref{time-estim}) implies, using a standard Cauchy-Schwarz inequality, the existence of a constant $C>0$ independent of $\tau$ such that 
%  $$
%  \int_0^T\left\| \frac{u_i^{(\tau)} - \sigma_\tau u_i^{(\tau)}}{\tau}\right\|_{H^{-1}(\Omega)} = \sum_{p\in \mathbb{N}} \tau \left\| \frac{u_i^{p+1} - u_i^p}{\tau}\right\|_{H^{-1}(\Omega)} =  \sum_{p\in \mathbb{N}}  \left\| u_i^{p+1} - u_i^p\right\|_{H^{-1}(\Omega)} \leq C.
%  $$
% \end{remark}

\begin{proof}
We fix $i \in \{1, \dots, n\}$ and $\phi_i \in H^2(\Om)$. Then, for all $p\in \mathbb{N}$, taking into account the fact that $0 \leq u_j^{p+1} \leq 1$ for all $1\leq j \leq n$ and using Cauchy-Schwarz inequality, we obtain 
	\[
	\begin{split}
	&\l|\frac{1}{\tau} \into (u_i^{p+1}- u_i^p) \phi_i dx \r| \\
	& \le \into \left(\sum_{1 \le j \ne i \le n}  |K_{ij} u_i^{p+1} u_j^{p+1} \nabla(w_i^{p+1}- w_j^{p+1})|+ |K_{i0} u_i^{p+1} u_0^{p+1} \nabla(w_i^{p+1}- w_0^{p+1/2})|\right) |\nabla \phi_i|dx \\
	& \qquad + \tau \langle w_i^{p+1}- w_0^{p+1/2}, \phi_i\rangle_{H^2(\Omega)}\\
	& \le \into \left(\sum_{1 \le j \ne i \le n} K_{ij} \l(u_j^{p+1} |\nabla u_i^{p+1}|+ u_i^{p+1} |\nabla u_j^{p+1}| \r)+ K_{i0} u_i^{p+1} u_0^{p+1} |\nabla w_i^{p+1}- w_0^{p+1/2}|\right)|\nabla \phi_i| dx \\
	& \qquad + \tau  \|w_i^{p+1}- w_0^{p+1/2}\|_{H^2(\Om)} \|\phi_i\|_{H^2(\Om)} \\
	& \le C\left( \sum_{j=1}^n \|u_j^{p+1}\|_{H^1(\Omega)} +\|w_i^{p+1}- w_0^{p+1/2}\|_{H^1(\Omega)}\right) \|\phi_i\|_{H^1(\Om)} + \tau \|w_i^{p+1}- w_0^{p+1/2}\|_{H^2(\Om)} \|\phi_i\|_{H^2(\Om)}.
	\end{split}
	\]
Using the previous estimates proved in this section gives the desired result..
\end{proof}

\section{Passing to the limit as $\tau\to 0$ and proof of Theorem~\ref{thm:existence}}
\label{sect4}

The aim of this section is to identify a weak solution to (\ref{eq:finalsys}) in the sense of Definition~\ref{def1}  as the weak limit  of some extracted subsequence of $(u^{(\tau)})_{\tau >0}$ as $\tau \to 0^+$. 
Passing to the limit  can be done for most terms of the system using either standard arguments in the analysis of cross-difusion systems by the boundedness-by-entropy method (see~\cite{Juengel2015boundedness}), or 
of the Cahn-Hillard model with classical degenerate mobility (see~\cite{EG}). However, some terms appearing in the system require specific arguments, which are new at least up to our knowledge, and which we detail below. Where not differently specified, the limit will be always understood as $\tau \to 0^+$. 

The different estimates collected in Section~\ref{apriori-sect} yield the existence of $\bu = (u_0, \dots, u_n)\in  L^2((0,T); H^2(\Omega)) \times (L^2((0,T); H^1(\Omega)))^{n}$ such that $0\leq u_i \leq 1$ for all 
$0\leq i \leq n$, $u_0 = 1 - \sum\limits_{i=1}^n u_i$, and such that
up to the extraction of a subsequence, 
	\[
	\begin{aligned}
	u_i^{(\tau)} &\mathop{\rightharpoonup}  u_i \quad && \mbox{ weakly in }L^2((0,T); H^1(\Omega)),\\
	\frac{u_i^{(\tau)} - \sigma_\tau u_i^{(\tau)}}{\tau}  & \mathop{\rightharpoonup}  \partial_t u_i && \mbox{ weakly in }L^2((0,T); (H^2(\Omega))'),
	\end{aligned}
	\]
for all $1\leq i \leq n$ and 
	\[
	\begin{aligned}
	u_0^{(\tau)}(t) &\mathop{\rightharpoonup}  u_0 \quad && \mbox{ weakly in }L^2((0,T); H^2(\Omega)),\\
	\frac{u_0^{(\tau)} - \sigma_\tau u_0^{(\tau)}}{\tau}  & \mathop{\rightharpoonup}  \partial_t u_0 && \mbox{ weakly in }L^2((0,T); (H^2(\Omega))').
	\end{aligned}
	\]
Using \cite[Theorem~1]{Dreher2012}, we also obtain that  $ u_i^{(\tau)} \to u_i$  strongly in $L^2((0,T); L^2(\Omega))$ 
and  $ u_0^{(\tau)} \to u_0$  strongly in $L^2((0,T); H^1(\Omega))$ and $L^2((0,T); L^\infty(\Omega))$. This is a consequence of the compact embeddings $H^1(\Omega) \hookrightarrow L^2(\Omega) $, $H^2(\Omega) \hookrightarrow L^\infty(\Omega)$, and $H^2(\Omega) \hookrightarrow H^1(\Omega)$. The uniform bound of $\left(u_i^{(\tau)}\right)_{\tau >0}$ in $L^\infty((0,T); L^\infty(\Omega))$ implies that, up to the extraction of a subsequence 
$$
u_i^{(\tau)}\mathop{\to}  u_i,  \mbox{ strongly in }L^p((0,T); L^p(\Omega)), \quad \forall \, 1\leq p < +\infty, \, 0\leq i \leq n.
$$
Moreover, the uniform bound of $\left(\nabla u_0^{(\tau)}\right)_{\tau >0}$ in $L^\infty((0,T); (L^2(\Omega))^d)$    implies that, up to the extraction of a subsequence, 
$$
\nabla u_0^{(\tau)}\mathop{\to}  \nabla u_0 \quad  \mbox{ strongly in }L^p((0,T); (L^2(\Omega))^d).
$$
Furthermore, up to the extraction of a subsequence,  $\left( \sigma_\tau u_0^{(\tau)}\right)_{\tau >0}$  converges to $u_0$  weakly in $L^2((0,T); H^2(\Omega))$  and strongly in 
$L^2((0,T); H^1(\Omega))$ and $L^2((0,T); L^\infty(\Omega))$.
% Finally, the bound () implies that, up to the extraction of a subsequence, there exists $J\in L^2((0,T), L^2(\Omega)^d)$ such that 
% $$
% u_0^{(\tau)} (1- u_0^{(\tau)}) \nabla w_0^{(\tau)} \mathop{\rightharpoonup}_{\tau, \tau\to 0} J, \mbox{ weakly in }L^2((0,T), L^2(\Omega)^d).
% $$
Finally, Remark \ref{rem2} gives
$$
\tau\left( w_i^{(\tau)} -w_0^{(\tau)} \right) \mathop{\to}  0 \quad \mbox{ strongly in }L^2((0,T); H^2(\Omega)),
$$
for all $1\leq i \leq n$. 

\medskip

Equations \eqref{dis-sys1} and \eqref{eq:w_to_u_weak0} imply that, for all $1\leq i \leq n$,
	\begin{equation}
	\label{weak-form1}
	\begin{split}
	\int_\tau^T \into \frac{u_i^{(\tau)}- \sigma_\tau u_i^{(\tau)}}{\tau} \phi_i dx dt &= -\int_\tau ^T \into \biggl(\sum_{1 \le j \ne i \le n} K_{ij} u_i^{(\tau)} u_j^{(\tau)} \nabla (w_i^{(\tau)}- w_j^{(\tau)})\\
	& \quad \quad+ K_{i0} u_i^{(\tau)} u_0^{(\tau)} \nabla (w_i^{(\tau)}- w_0^{(\tau)}) \biggr) \cdot \nabla \phi_i dx dt \\
	& \quad -\tau \int_\tau^T \langle w_i^{(\tau)}- w_0^{(\tau)}, \phi_i \rangle_{H^2(\Omega)} dt,
	\end{split}
	\end{equation}
for all piecewise constant functions $\phi_i: (0,T) \to H^2(\Omega)$,  with 
\begin{equation}\label{eq:rel00}
w_i^{(\tau)} = \ln u_i^{(\tau)} \quad \mbox{ and } \quad w_0^{(\tau)} = - \eps \Delta u_0^{(\tau)} + \beta (1 - 2 \sigma_\tau u_0^{(\tau)}). 
\end{equation}
Since the set of such $\phi_i$ is dense in $L^2((0,T); H^2(\Omega))$, the weak formulation \eqref{weak-form1} also holds for all $\phi_i \in L^2((0,T); H^2(\Omega))$. Using (\ref{eq:rel00}) then we can rewrite \eqref{weak-form1} equivalently as follows: for all $\phi_i \in L^2((0,T); H^2(\Omega))$, 
	\begin{align}\label{eq:weak_time_discrete}
	\begin{split}
	\int_\tau^T \into \frac{u_i^{(\tau)}- \sigma_\tau u_i^{(\tau)}}{\tau} \phi_i dx dt &= -\int_\tau ^T \into \sum_{1 \le j \ne i \le n} K_{ij}\left( u_j^{(\tau)}  \nabla u_i^{(\tau)}  - u_i^{(\tau)}  \nabla u_j^{(\tau)} \right) \cdot \nabla \phi_i dx dt\\
	& \quad \quad - \int_\tau^T \into  K_{i0} u_0^{(\tau)} \nabla u_i^{(\tau)} \cdot \nabla \phi_i  + 	\int_\tau^T \into   K_{i0} u_0^{(\tau)} u_i^{(\tau)}   \nabla w_0^{(\tau)} \cdot \nabla \phi_i dx dt\\
	& \quad -\tau  \int_\tau^T \langle w_i^{(\tau)}- w_0^{(\tau)}, \phi_i \rangle_{H^2(\Omega)} dt,
	\end{split}
	\end{align}
\medskip
The different convergences identified above enable to easily identify the limit as $\tau \to 0^+$, following standard arguments in the study of cross-diffusion systems (see for instance~\cite{Juengel2015boundedness}). More precisely, for all 
$1\leq i\neq j \leq n$ it holds that
	\begin{align}\label{eq:easy_limits}
	\begin{split}
	 \int_\tau^T \into \frac{u_i^{(\tau)}- \sigma_\tau u_i^{(\tau)}}{\tau} \phi_i dx dt & \mathop{\to}  \int_0^T \langle \partial_t u_i ,\phi_i \rangle_{(H^2(\Omega))', H^2(\Omega)} dt,\\
	 \int_\tau ^T \into \sum_{1 \le j \ne i \le n} K_{ij}\left( u_j^{(\tau)}  \nabla u_i^{(\tau)}  - u_i^{(\tau)}  \nabla u_j^{(\tau)} \right) \cdot \nabla \phi_i dx dt & \mathop{\to} \int_0^T \into \sum_{1 \le j \ne i \le n} K_{ij}\left( u_j  \nabla u_i  - u_i  \nabla u_j \right) \cdot \nabla \phi_i dx dt,\\
	 \int_\tau^T \into  K_{i0} u_0^{(\tau)} \nabla u_i^{(\tau)} \cdot \nabla \phi_i dx dt & \mathop{\to}  \int_0^T \into  K_{i0} u_0 \nabla u_i\cdot \nabla \phi_i dx dt,\\
	 \tau  \int_\tau^T \langle w_i^{(\tau)}- w_0^{(\tau)}, \phi_i \rangle_{H^2(\Omega)} dt  & \mathop{\to}  0.
	 \end{split}
	\end{align}
Of course, all these convergences hold up to the extraction of subsequences. Passing to the limit in the term 
$$
\displaystyle \int_\tau^T \into   K_{i0} u_0^{(\tau)} u_i^{(\tau)}   \nabla w_0^{(\tau)} \cdot \nabla \phi_i dx dt
$$ 
requires specific arguments which is the object of the following lemma. We set
$w_0:= - \eps \Delta u_0 + \beta (1-2u_0)$ and point out that the convergences stated above imply that $ w_0^{(\tau)} \rightharpoonup w_0$ weakly in $L^2((0,T); L^2(\Omega))$ as $\tau \to 0^+$.

\begin{lemma}\label{lem:lastlim}
	There exists $J\in L^2((0,T); (L^2(\Omega))^d)$ which satisfies $J = (1-u_0)u_0 \nabla w_0$ in the weak sense, i.e.,
	\begin{equation*} %\label{eq:weakJ}
	\int_0^T\int_\Omega J \cdot \bta dx dt=  - \int_0^T \int_\Omega w_0  \divergenz (u_0 (1-u_0) \bta) dx dt, 
	\end{equation*}
	for all $\bta \in L^2((0,T); (H^1(\Omega))^d) \cap L^\infty((0,T)\times \Omega; \mathbb{R}^d)$ with $\bta \cdot {\bf n} = 0$ on $\partial \Omega \times (0,T)$ and such that, up to the extraction of a subsequence,
	\begin{equation}\label{eq:term0}
	(1-u_0^{(\tau)})u_0^{(\tau)} \nabla w_0^{(\tau)} \mathop{\rightharpoonup}  J \quad \mbox{ weakly in }  L^2((0,T); (L^2(\Omega))^d)
	\end{equation}
	and  
	\begin{equation}\label{eq:termi}
	u_i^{(\tau)} u_0^{(\tau)} \nabla w_0^{(\tau)} \mathop{\rightharpoonup} \frac{u_i}{1-u_0} J \quad \mbox{ weakly in }  L^2((0,T); (L^2(\Omega))^d)
	\end{equation}
for all $1\leq i \leq n$.
\end{lemma}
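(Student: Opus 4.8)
\emph{Overview and existence of $J$.} Write $g^{(\tau)} := (1-u_0^{(\tau)})u_0^{(\tau)}\nabla w_0^{(\tau)}$, so that the lemma amounts to three tasks: extracting $J$ as a weak $L^2$-limit of $g^{(\tau)}$ (this is \eqref{eq:term0}), identifying the weak form of $J$, and proving \eqref{eq:termi}. For the first task I would note that $0\le u_0^{(\tau)}\le 1$ gives $(1-u_0^{(\tau)})u_0^{(\tau)}\le\tfrac14$, whence
\[
|g^{(\tau)}|^2 = \big[(1-u_0^{(\tau)})u_0^{(\tau)}\big]^2|\nabla w_0^{(\tau)}|^2\le \tfrac14\,(1-u_0^{(\tau)})u_0^{(\tau)}|\nabla w_0^{(\tau)}|^2,
\]
so that \eqref{dis.d} bounds $g^{(\tau)}$ uniformly in $L^2((0,T);(L^2(\Om))^d)$. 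Passing to a subsequence, $g^{(\tau)}\weak J$ weakly in that space, which is precisely \eqref{eq:term0}.

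\emph{Weak form of $J$.} For fixed $\tau$ the discrete potential $w_0^{(\tau)}$ is regular enough to integrate by parts: the right-hand side of the Neumann problem of Remark~\ref{rem:CL} lies in $H^1(\Om)$ (the terms $\bar w_i^{(\tau)}$ and $\sigma_\tau u_0^{(\tau)}$ are in $H^1$, and $\ln u_i^{(\tau)},\ln u_0^{(\tau)}\in H^1$ because the densities are bounded away from $0$ by \eqref{eq:positiv}), so elliptic regularity gives $u_0^{(\tau)}\in H^3(\Om)$ and hence $w_0^{(\tau)}\in H^1(\Om)$. Since $(1-u_0^{(\tau)})u_0^{(\tau)}\bta\in (H^1(\Om))^d$ with vanishing normal trace, for any admissible $\bta$ I obtain
\[
\int_0^T\!\!\into g^{(\tau)}\cdot\bta\,dx\,dt = -\int_0^T\!\!\into w_0^{(\tau)}\,\divergenz\!\big((1-u_0^{(\tau)})u_0^{(\tau)}\bta\big)\,dx\,dt.
\]
To pass to the limit I expand the divergence as $(1-2u_0^{(\tau)})\nabla u_0^{(\tau)}\cdot\bta+(1-u_0^{(\tau)})u_0^{(\tau)}\divergenz\bta$; using $u_0^{(\tau)}\to u_0$ strongly in $L^2((0,T);L^\infty(\Om))$ and $\nabla u_0^{(\tau)}\to\nabla u_0$ strongly in $L^2((0,T)\times\Om)$, this converges strongly in $L^2((0,T)\times\Om)$ to $\divergenz((1-u_0)u_0\bta)$. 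Paired with $w_0^{(\tau)}\weak w_0$ weakly in $L^2$ (weak times strong), the right-hand side converges to $-\int_0^T\into w_0\,\divergenz(u_0(1-u_0)\bta)$, while the left-hand side converges to $\int_0^T\into J\cdot\bta$ by \eqref{eq:term0}, giving the claimed identity.

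\emph{Identification \eqref{eq:termi}.} Set $\kappa_i^{(\tau)}:=u_i^{(\tau)}/(1-u_0^{(\tau)})\in[0,1]$ (with $\kappa_i^{(\tau)}=0$ where $u_0^{(\tau)}=1$), so that $u_i^{(\tau)}u_0^{(\tau)}\nabla w_0^{(\tau)}=\kappa_i^{(\tau)}g^{(\tau)}$ is bounded in $L^2$ and, up to a subsequence, converges weakly to some $\zeta_i$; it remains to show $\zeta_i=\kappa_i J$ with $\kappa_i=u_i/(1-u_0)$. Testing against admissible $\bta$ and splitting $(0,T)\times\Om=A_\eta\cup B_\eta$ with $A_\eta:=\{1-u_0>\eta\}$, on $A_\eta$ the denominator is bounded below, so the a.e.\ convergences of $u_i^{(\tau)},u_0^{(\tau)}$ give $\mathbf 1_{A_\eta}\kappa_i^{(\tau)}\bta\to\mathbf 1_{A_\eta}\kappa_i\bta$ strongly in $L^2$ by dominated convergence; paired with $g^{(\tau)}\weak J$ this produces $\int_{A_\eta}\kappa_i J\cdot\bta$ in the limit. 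On $B_\eta$ I use the weighted Cauchy–Schwarz bound
\[
\int_{B_\eta}|g^{(\tau)}|\le\Big(\int_{B_\eta}(1-u_0^{(\tau)})u_0^{(\tau)}|\nabla w_0^{(\tau)}|^2\Big)^{1/2}\Big(\int_{B_\eta}(1-u_0^{(\tau)})u_0^{(\tau)}\Big)^{1/2},
\]
whose first factor is controlled by \eqref{dis.d} and whose second factor tends to $\int_{B_\eta}(1-u_0)u_0\le\eta\,|(0,T)\times\Om|$, so the $B_\eta$-contribution is $O(\sqrt\eta)$ uniformly in small $\tau$. Letting $\tau\to0^+$ and then $\eta\to0^+$, and using $\kappa_i=0$ a.e.\ on $\{u_0=1\}$, yields $\int_0^T\into\zeta_i\cdot\bta=\int_0^T\into\kappa_i J\cdot\bta$ for all admissible $\bta$, i.e.\ \eqref{eq:termi}.

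\emph{Main obstacle.} The delicate point is the degenerate set $\{u_0=1\}$: there $1-u_0^{(\tau)}\to0$, so $\kappa_i^{(\tau)}$ need not converge a.e.\ and a direct weak-times-strong argument for $\kappa_i^{(\tau)}g^{(\tau)}$ fails. The resolution is the weighted $L^1$-estimate on $B_\eta$, which quantitatively forces $g^{(\tau)}$ to be small exactly where the mobility weight degenerates; this is what reconciles the weak limit $J$ with the convention $\kappa(a,b)=0$ for $b=1$ and completes the identification.
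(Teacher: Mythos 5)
Your proposal is correct and follows essentially the same route as the paper: weak $L^2$ compactness of $(1-u_0^{(\tau)})u_0^{(\tau)}\nabla w_0^{(\tau)}$ from \eqref{dis.d}, integration by parts plus weak--strong convergence to identify the weak form of $J$, and a decomposition of the space--time domain according to whether $1-u_0$ is bounded below to prove \eqref{eq:termi}, with weak--strong convergence of $u_i^{(\tau)}/(1-u_0^{(\tau)})$ on the good set and a quantitative smallness estimate on the degenerate set. The only notable difference is in the execution of that last estimate: you use a single space--time Cauchy--Schwarz inequality combined with dominated convergence of $(1-u_0^{(\tau)})u_0^{(\tau)}$ on $B_\eta$, whereas the paper derives a mixed-norm bound in $L^1((0,T);L^2(\Omega))$ via the auxiliary sets $E^{\delta,\tau}$ of times where $\|u_0^{(\tau)}(t)-u_0(t)\|_{L^\infty(\Omega)}$ is large; both arguments rest on the same weighted bound \eqref{dis.d} and yield the same $O(\sqrt{\eta})$ control.
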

\begin{remark}
 The weak limit (\ref{eq:term0}) can be obtained using classical arguments for the standard Cahn-Hilliard system (see~\cite{EG} for instance). We recall them in the proof below for the sake of completeness. 
 However, obtaining (\ref{eq:termi}) is not standard, at least up to our knowledge, and 
 the arguments which yield 
 to this convergence are detailed in the proof below. Let us mention here that one difficulty in the analysis is that the sequence 
 $\left( \frac{u_i^{(\tau)}}{1-u_0^{(\tau)}}\right)_{\tau >0}$ does not converge a priori in any sense to $\frac{u_i}{1-u_0}$ if $1-u_0 = 0$ in 
 some parts of the domain $\Omega$. 
\end{remark}

\begin{proof}
% 	Let us first prove that 
% 	$u_0^{(\tau)} (1-u_0^{(\tau)}) \nabla w_0^{(\tau)}$ weakly converges in $L^2(0,T, L^2(\Omega)^d)$ to some $J\in L^2((0,T), L^2(\Omega)^d)$ which satisfies (\ref{eq:weakJ}). The arguments are similar to the ones used in
% 	~\cite{EG} but we recall them here for the sake of completeness. Indeed, s
	From \eqref{dis.d} we know that 
	$$
	\int_0^T \int_\Omega |u_0^{(\tau)} (1-u_0^{(\tau)}) \nabla w_0^{(\tau)}|^2 dx dt \leq \int_0^T \int_\Omega u_0^{(\tau)} (1-u_0^{(\tau)}) |\nabla w_0^{(\tau)}|^2 dx dt \leq C,
	$$
for every $\tau> 0$. Then,
	up to the extraction of a subsequence, there exists $J\in L^2((0,T); (L^2(\Omega))^d)$ such that 
	\[
	u_0^{(\tau)} (1-u_0^{(\tau)}) \nabla w_0^{(\tau)} \rightharpoonup J \quad \text{ weakly in }  L^2((0,T); (L^2(\Omega))^d).
	\]
 Let us  now take $\bta \in L^2((0,T); (H^1(\Omega))^d) \cap L^\infty((0,T)\times \Omega; \mathbb{R}^d)$ which fulfills $\bta \cdot {\bf n} = 0$ on $\partial \Omega \times (0,T)$. Integrating by parts gives
	\begin{equation}
	\label{int-parts}
	\begin{split}
	& \int_0^T \int_\Omega u_0^{(\tau)}(1-u_0^{(\tau)}) \nabla w^{(\tau)}_0 \bta dx dt \\
	&= - \int_0^T \int_\Omega (1-2u_0^{(\tau)})w^{(\tau)}_0  \nabla u^{(\tau)}_0 \cdot \bta dx dt - \int_0^T \int_\Omega u_0^{(\tau)}
	(1-u_0^{(\tau)}) w^{(\tau)}_0 \divergenz \bta dx dt. 
	\end{split}
	\end{equation}
%First of all observe that  $(u_0^{(\tau)})_{\tau >0}$ is bounded in $L^2((0,T); H^2(\Omega))$. Moreover, owing to \cite[Theorem~1]{Dreher2012},  there exists a constant $C>0$, independent of $\tau $, such that
%	$$
%	\int_0^T\sum_{i=0}^n \left\| \frac{u_i^{(\tau)} - \sigma_\tau u_i^{(\tau)}}{\tau} \right\|_{(H^{2}(\Omega))'} \leq C.
%	$$
%Therefore, taking into account the compact embeddings $L^\infty(\Omega) \hookrightarrow H^2(\Omega)$ and 
%	$H^1(\Omega) \hookrightarrow H^2(\Omega)$ it follows that 
%	\[
%	u_0^{(\tau)} \to u_0 \quad \text{strongly in } L^2((0,T); L^\infty(\Omega)) \text{ and in }  L^2((0,T); H^1(\Omega)).
%	\] 
Since  $w^{(\tau)}_0 \rightharpoonup w_0$ weakly in $L^2((0,T); L^2(\Omega))$, the strong convergence of $\nabla u^{(\tau)}$ together with the fact that $u_0^{(\tau)}$ converges a.e. and is uniformly bounded implies 
$$
 (1-2u_0^{(\tau)})w^{(\tau)}_0  \nabla u^{(\tau)}_0 \rightharpoonup  (1-2u_0^{(\tau)})w^{(\tau)}_0  \nabla u^{(\tau)}_0,\quad \text{weakly in } L^1((0,T);L^1(\Omega)^d)
$$
and enables us to pass to the limit in the first term on the right hand side of \eqref{int-parts}. For the second term we argue again using the a.e. convergence of $u_0^{(\tau)}$ and thus obtain (\ref{eq:term0}).  
	
	\medskip
	
	Let us now prove  the weak convergence (\ref{eq:termi}). We know 
	that, up to the extraction of a subsequence, $ u_0^{(\tau)} \to u_0$ strongly in $L^2((0,T); L^\infty(\Omega))$. 
	This implies that for almost all $t\in (0,T)$
	$$
	\|u_0^{(\tau)}(t) - u_0(t)\|_{L^\infty(\Omega)} = C_{\tau}(t)
	$$
	where $C_\tau$ satisfies $\displaystyle \int_0^T C_\tau(t) \,dt \mathop{\to}  0$ (using a Cauchy-Schwarz inequality). In particular, for all $\delta >0$, denoting by
	$$
	E^{\delta, \tau}:= \{ t\in [0,T], \; \|u_0^{(\tau)}(t) - u_0(t)\|_{L^\infty(\Omega)} > \delta \},
	$$
	it holds that the Lebesgue measure of the set $E^{\delta, \tau}$ goes to $0$ as $\tau$ goes to $0$. We also consider the complementary of $E^{\delta, \tau}$, i.e., the set 
	$$
	E^{\delta, \tau, c}:= \{ t\in [0,T], \; \|u_0^{(\tau)}(t) - u_0(t)\|_{L^\infty(\Omega)} \leq \delta \}.
	$$
	Let now $\epsilon >0$ and let us introduce the set 
	\begin{align*}
	M^\epsilon(t):= \left\{ x\in \Omega, 1-u_0(t,x) \geq \epsilon\right\},
	\end{align*}
	together with its complementary
	\begin{align*}
		M(t)^{\epsilon,c}:= \left\{ x\in \Omega, 1- u_0(t,x) < \epsilon\right\}.
	\end{align*}
%	For $\epsilon >0$, $t \in (0,T)$ and $\tau>0$. 
	For all $t>0$ we can write
	\begin{equation}\label{eq:identity}
	\begin{split}
	& u_0^{(\tau)}(t) u_i^{(\tau)}(t) \nabla w_0^{(\tau)}(t)  \\
	&= \chi_{M^\epsilon(t)}u_0^{(\tau)}(t) u_i^{(\tau)}(t) \nabla w_0^{(\tau)}(t) + \chi_{M^{\epsilon,c}(t)}u_0^{(\tau)}(t) u_i^{(\tau)}(t) \nabla w_0^{(\tau)}(t).
	\end{split}
	\end{equation}
	Let us consider both terms separately. 
	On the one hand, it holds that $u_i^{(\tau)} \leq 1 - u_0^{(\tau)}$. Thus, we have 
	$$
	\int_0^T \left\|\chi_{M^{\epsilon, c}(t)}u_0^{(\tau)}(t) u_i^{(\tau)}(t) \nabla w_0^{(\tau)}(t)\right\|_{L^2(\Omega)}^2 \,dt \leq \int_0^T \left\|u_0^{(\tau)} (1 -u_0^{(\tau)}) \nabla w_0^{(\tau)}\right\|_{L^2(\Omega)}^2 dt \leq C, 
	$$
	for some constant $C>0$ independent of $\tau >0$. Hence, if we consider the function $h^{\epsilon, (\tau)}: (0, T) \times \Om \to \R$ such that $h^{\epsilon, (\tau)}(t, x)= \chi_{M^{\epsilon, c}(t)}u_0^{(\tau)}(t,x) u_i^{(\tau)}(t,x) \nabla w_0^{(\tau)}(t,x)$, it follows that 
	there exists a function $h^\epsilon \in L^2((0,T); (L^2(\Omega))^d)$ such that
	\begin{equation}\label{eq:wlimL2}
	h^{\epsilon, (\tau)} \mathop{\rightharpoonup}  h^\epsilon \quad \mbox{ weakly in }  L^2((0,T); (L^2(\Omega))^d). 
	\end{equation}
	Besides, since $\|h^\epsilon\|_{L^2((0,T); (L^2(\Omega))^d)} \leq C$ for all $\epsilon >0$, there exists $h \in L^2((0,T); (L^2(\Omega))^d)$ such that, up to the extraction of a subsequence, 
	$$
	h^\epsilon \mathop{\rightharpoonup} h \quad \mbox{ weakly in }  L^2((0,T); (L^2(\Omega))^d) \quad \text{ as } \epsilon\to 0^+.
	$$
Let us now show that necessarily $h = 0$. Equation~\eqref{eq:wlimL2} implies that 
	$$
	h^{\epsilon, (\tau)}  \mathop{\rightharpoonup} h^\epsilon \quad \mbox{ weakly in }  L^1((0,T); (L^2(\Omega))^d), 
	$$
	and that 
	$$
	\|h^\epsilon\|_{L^1((0,T); L^2(\Omega))} \leq \mathop{\liminf}_{\tau \to 0} \left\| h^{\epsilon, (\tau)} \right\|_{L^1((0,T); L^2(\Omega))}.
	$$
%	Let us now 
	To prove a bound on the right hand side we exploit the fact that $u_i^{(\tau)} \le 1-u_0^{(\tau)}$ and $u_0^{(\tau)} \le 1$ to estimate
%	$\displaystyle \mathop{\liminf}_{\tau \to 0} \left\| h^{\epsilon, (\tau)}  \right\|_{L^1((0,T); L^2(\Omega))}$.
%	It holds that
	\begin{align*}
	& \left\| h^{\epsilon, (\tau)}  \right\|_{L^1(0,T; L^2(\Omega))} = \int_0^T\left\|\chi_{M^{\epsilon, c}(t)}u_0^{(\tau)}(t) u_i^{(\tau)}(t) \nabla w_0^{(\tau)}(t)\right\|_{L^2(\Omega)}\,dt\\
	& \le \int_0^T\left\|\chi_{M^{\epsilon, c}(t)}u_0^{(\tau)}(t) (1-u_0^{(\tau)}(t)) \nabla w_0^{(\tau)}(t)\right\|_{L^2(\Omega)}\,dt\\
	& \leq \int_0^T \left\|\chi_{M^{\epsilon, c}(t)}\sqrt{1-u_0^{(\tau)}(t)} \right\|_{L^\infty(\Omega)} \left\|\sqrt{1- u_0^{(\tau)}(t)} \sqrt{u_0^{(\tau)}(t)}\nabla w_0^{(\tau)}(t)\right\|_{L^2(\Omega)}\,dt\\
	& \leq \left(\int_0^T \left\|\chi_{M^{\epsilon, c}(t)}\sqrt{1-u_0^{(\tau)}(t)} \right\|_{L^\infty(\Omega)}^2\,dt\right)^{1/2}\left( \int_0^T \left\|\sqrt{1- u_0^{(\tau)}(t)} \sqrt{u_0^{(\tau)}(t)}\nabla w_0^{(\tau)}(t)\right\|^2_{L^2(\Omega)}\,dt\right)^{1/2}\\
	& \leq C \left(\int_0^T \left\|\chi_{M^{\epsilon, c}(t)}\sqrt{1-u_0^{(\tau)}(t)} \right\|_{L^\infty(\Omega)}^2\,dt\right)^{1/2},\\
	%& \leq \int_0^T \left\|\chi_{M^{\epsilon, c}(t)}\left(\sqrt{1-u_0} + \sqrt{|u_0^N -u_0|}\right) \right\|_{L^\infty(\Omega)} \left\|\sqrt{1- u_0^N} \sqrt{u_0^N}\nabla \widetilde{w}_0^N\right\|_{L^2(\Omega)}\,dt\\
	%& \leq \int_0^T\left( \left\|\chi_{M^{\epsilon, c}(t)}\left(\sqrt{1-u_0}\right\|_{L^\infty(\Omega)} + \left\|\chi_{M^{\epsilon, c}(t)}\sqrt{|u_0^N -u_0|}\right) \right\|_{L^\infty(\Omega)}\right) \left\|\sqrt{1- u_0^N} \sqrt{u_0^N}\nabla \widetilde{w}_0^N\right\|_{L^2(\Omega)}\,dt\\
	\end{align*}
	for some constant $C>0$ independent of $\tau$. We further estimate the integrand of the last term of the inequality above as 
	\begin{align*}
	&  \left\|\chi_{M^{\epsilon, c}(t)}\sqrt{1-u_0^{(\tau)}(t)} \right\|_{L^\infty(\Omega)} \\
	& \leq \left\|\chi_{M^{\epsilon, c}(t)}\left(\sqrt{1-u_0(t)} + \sqrt{|u_0(t) - u_0^{(\tau)}(t)|}\right) \right\|_{L^\infty(\Omega)}\\
	& \leq \left\|\chi_{M^{\epsilon, c}(t)}\sqrt{1-u_0(t)}\right\|_{L^\infty(\Omega)} + \left\|\chi_{M^{\epsilon, c}(t)}\sqrt{|u_0(t) - u_0^{(\tau)}(t)|} \right\|_{L^\infty(\Omega)}\\
	& \leq \sqrt{\epsilon} + \chi_{E^{\epsilon, \tau}}(t)\left\|\chi_{M^{\epsilon, c}(t)}\sqrt{|u_0(t) - u_0^{(\tau)}(t)|}  \right\|_{L^\infty(\Omega)} + \chi_{E^{\epsilon, \tau, c}}(t) \left\|\chi_{M^{\epsilon, c}(t)}\sqrt{|u_0(t) - u_0^{(\tau)}(t)|} \right\|_{L^\infty(\Omega)} \\
	& \leq \sqrt{\epsilon} + 2 \chi_{E^{\epsilon, \tau}}(t) +  \sqrt{\epsilon} \leq 2 \sqrt{\epsilon} + 2 \chi_{E^{\epsilon, \tau}}(t).\\
	\end{align*}
	This gives
	\begin{align*}
	\int_0^T \left\|\chi_{M^{\epsilon, c}(t)}\sqrt{1-u_0^{(\tau)}(t)} \right\|_{L^\infty(\Omega)}^2\,dt & \leq 4 T \epsilon + \int_0^T \chi_{E^{\epsilon, \tau}}(t) (4 + 4 \sqrt{\epsilon})\,dt, \\
	& \leq  4 T \epsilon + (4 + 4 \sqrt{\epsilon}) |E^{\epsilon, \tau}|. \\
	\end{align*}
	Thus, since $\displaystyle |E^{\epsilon, \tau}| \mathop{\to}  0$ as $\tau \to 0^+$, we  obtain that
	$$
	\mathop{\liminf}_{\tau \to 0} \int_0^T \left\|\chi_{M^{\epsilon, c}(t)}\sqrt{1-u_0^{(\tau)}(t)} \right\|_{L^\infty(\Omega)}^2\,dt \leq 4 T \epsilon.
	$$
This implies that
	$$
	\mathop{\liminf}_{\tau\to 0} \int_0^T\left\|\chi_{M^{\epsilon, c}(t)}u_0^{(\tau)}(t) u_i^{(\tau)}(t) \nabla w_0^{(\tau)}(t)\right\|_{L^2(\Omega)}\,dt =\mathop{\liminf}_{\tau \to 0} \left\| h^{\epsilon, (\tau)}  \right\|_{L^1((0,T); L^2(\Omega))} \leq 2C\sqrt{T \epsilon}.
	$$
	As a consequence, 
	$$
	\|h^\epsilon\|_{L^1((0,T); L^2(\Omega))} \leq  2C \sqrt{T \epsilon},
	$$
that is, $  \|h^\epsilon\|_{L^1((0,T); L^2(\Omega))} \to 0$ as $\epsilon \to 0^+$. Moreover, since $ h^\epsilon \rightharpoonup h$ weakly in $L^2((0,T); L^2(\Omega))$, then the weak convergence holds also in   $L^1((0,T); L^2(\Omega))$. This implies that $h = 0$.

	\medskip

	Let us now consider the second term in (\ref{eq:identity}). Let  $g^{\epsilon, (\tau)}: (0,T)\times \Omega \to \R$ be defined by $g^{\epsilon, (\tau)} (t, x) = \chi_{M^{\epsilon, c}(t)}u_0^{(\tau)}(t,x) u_i^{(\tau)}(t,x) \nabla w_0^{(\tau)}(t,x)$. Since
	$$
	\left\|g^{\epsilon, (\tau)}\right\|_{L^2((0,T), L^2(\Omega))} \leq \left\|u_0^{(\tau)} (1 -u_0^{(\tau)}) \nabla w_0^{(\tau)}\right\|_{L^2((0,T); L^2(\Omega))} \leq C,
	$$
then  there exists $g^\epsilon \in L^2((0,T); L^2(\Omega))$ such that, up to the extraction of a subsequence, $ g^{\epsilon, (\tau)} \to  g^\epsilon $ weakly  as $\tau \to 0^+$.  
	Let us prove that 
		\[
		g^\epsilon(t,x) = \chi_{M^\epsilon(t)}(x)\frac{u_i(t,x)}{1-u_0(t,x)} J(t,x) \quad \text{ for almost all } (t,x)\in (0,T) \times \Omega.
		\] 
	On the one hand, we have 
	\[
	 \chi_{M^\epsilon(t)}(x)\frac{u_i^{(\tau)}(t,x)}{1-u_0^{(\tau)}(t,x)} \to \chi_{M^\epsilon(t)}(x)\frac{u_i(t,x)}{1-u_0(t,x)} \quad \text{ for almost all } (t, x)\in (0,T)\times \Omega.
	\]
Besides, $\chi_{M^\epsilon}(x)\frac{u_i^{(\tau)}(t)}{1-u_0^{(\tau)}(t)} \leq 1$ for almost all $t\in (0,T)$ so that 
%Thus, if $f^{\epsilon, (\tau)}: (0, T) \times \Om \to \R$ is the function  $f^{\epsilon, (\tau)} (t, x)= \chi_{M^\epsilon(t)}(x)\frac{u_i^{(\tau)}(t,x)}{1-u_0^{(\tau)}(t, x)}$, the 
Lebesgue's dominated convergence theorem implies that, up to the extraction of a subsequence,
	\[
	\chi_{M^\epsilon}(x)\frac{u_i^{(\tau)}(t)}{1-u_0^{(\tau)}(t)} \to f_i^{\epsilon} \quad \text{strongly},
	\]
	in any $L^p((0,T); L^p(\Omega))$ for all $p>1$, in particular in   $L^2((0,T); L^2(\Omega))$.
%where $f_i^{\epsilon}: (0, T) \times \Om \to \R$ is given by $f_i^{\epsilon}(t, x)= \chi_{M^\epsilon(t)}(x)\frac{u_i(t,x)}{1-u_0(t,x)}$.  Besides, since the sequence 
%	$(f^{\epsilon, (\tau)})_{\tau >0}$ is uniformly bounded in $L^\infty((0,T); L^\infty(\Omega))$, the convergence holds strongly in any $L^p((0,T); L^p(\Omega))$ for all $p>1$, in particular in   $L^2((0,T); L^2(\Omega))$. 
This, together with the fact that $(1-u_0^{(\tau)}) u_0^{(\tau)}\nabla w_0^{(\tau)} \to J$ weakly in $L^2((0,T); (L^2(\Omega))^d)$ yields that
	\[
	 g^{\epsilon, (\tau)} \to f_i^\epsilon J
	 \]
 in the sense of distribution. Hence, by uniqueness of the limit, we have
 	\[
	g^\epsilon = f_i^\epsilon J, 
	\]
which was the desired result. Thus, in the distributional sense,  
	$$
	u_0^{(\tau)} u_i^{(\tau)} \nabla w_0^{(\tau)} \mathop{\to} f_i^{\epsilon}J + h^\epsilon.
	$$
	Now, since $1-u_0^{(\tau)} \to 1-u_0$ strongly in $L^\infty(\Omega)$ and $u_i^{(\tau)} \to u_i$ almost everywhere it holds that $ f_i^{\epsilon} \to \kappa_i(t,x)$ almost everywhere, as $\epsilon \to 0^+$, being $\kappa_i(t,x)$ as defined in \eqref{eq:def_kappa_i}.
	 Thus,  the Lebesgue dominated convergence theorem gives  $ f_i^{\epsilon} \to \kappa_i $ strongly in $L^2((0,T); L^2(\Omega))$ as $\epsilon \to 0^+$. Therefore, 
	$$
	f_i^{\epsilon}J \mathop{\to}  \kappa_i J,
	$$
	so that finally, using the fact that $ h^\epsilon \rightharpoonup 0$ weakly in $L^2((0,T); (L^2(\Omega))^d)$, we obtain that
	\[
	u_0^{(\tau)} u_i^{(\tau)} \nabla w_0^{(\tau)} \rightharpoonup \kappa_i J \quad \text{weakly in } L^2((0,T); (L^2(\Omega))^d)
	\]
 as $\tau \to 0^+$, which was the desired result.
\end{proof}
We are now in a position to complete the proof of our main theorem.
\begin{proof}[Proof of Theorem~\ref{thm:existence}] We pass to the limit $\tau \to 0^+$ in \eqref{eq:weak_time_discrete} using \eqref{eq:easy_limits} and Lemma~\ref{lem:lastlim} which enables us to identify the limit
$$
\int_\tau^T \into   K_{i0} u_0^{(\tau)} u_i^{(\tau)}   \nabla w_0^{(\tau)} \cdot \nabla \phi_i dx dt \mathop{\to}  \int_0^T \into   K_{i0} \kappa_i J \cdot \nabla \phi_i dx dt,
$$
for all $1\leq i \leq n$. Thus, for all $1\leq i \leq n$ and all $\phi_i \in L^2((0,T); H^2(\Omega))$
	\begin{align*}
	\int_0^T \langle \partial_t u_i , \phi_i \rangle_{H^2(\Omega)', H^2(\Omega)} dt &= -\int_0^T \into \sum_{1 \le j \ne i \le n} K_{ij}\left( u_j  \nabla u_i  - u_i  \nabla u_j \right) \cdot \nabla \phi_i dx dt \\
	& \quad \quad - \int_0^T \into  K_{i0} u_0\nabla u_i \cdot \nabla \phi_i dx dt + \int_0^T \into   K_{i0} \kappa_iJ \cdot \nabla \phi_i dx dt. \\
\end{align*}
From the obtained weak formulation, it is clear that $\partial_t u_i \in L^2((0,T); (H^1(\Omega))')$ and that, by density, we can extend the above formulation to all $\phi_i \in L^2((0,T); H^1(\Omega))$ as follows: 
	\begin{align*}
	\int_0^T \langle \partial_t u_i , \phi_i \rangle_{H^1(\Omega)', H^1(\Omega)} dt &= -\int_0^T \into \sum_{1 \le j \ne i \le n} K_{ij}\left( u_j  \nabla u_i  - u_i  \nabla u_j \right) \cdot \nabla \phi_i dx dt \\
	& \quad \quad - \int_0^T \into  K_{i0} u_0\nabla u_i \cdot \nabla \phi_i dx dt + 	\int_0^T \into   K_{i0} \kappa_i J \cdot \nabla \phi_i dx dt. \\
\end{align*}
Lastly, we obtain that, necessarily, $u_i(0,\cdot) = u_i^0$ using similar arguments as in~\cite{Juengel2015boundedness}. 
Hence $((u_i)_{0\leq i \leq n}, J)$ is a weak solution of system (\ref{eq:finalsys}) in the sense of Definition~\ref{def1}, which concludes the proof of Theorem~\ref{thm:existence}. 
\end{proof}

\subsection*{Acknowledgements}\mbox{}
VE acknowledges support from the ANR JCJC project COMODO (ANR-19-CE46-0002) and from the PHC PROCOPE project Number 42632VA. GM and JFP were supported by the DAAD via the PPP Grant No. 57447206.

\bibliographystyle{plain}
\bibliography{CrossCH}

\end{document}